\newcounter{assumptions}
\newcommand{\BE}{{\mathbb{E}}}
\newcommand{\BP}{{\mathbb{P}}}
\newcommand{\BR}{{\mathbb{R}}}
\newcommand{\BZ}{{\mathbb{Z}}}
\newcommand{\CC}{{\mathcal{C}}}
\newcommand{\CJ}{{\mathcal{J}}}
\newcommand{\CT}{{\mathcal{T}}}
\newcommand{\mult}{{\rm mult}}
\newtheorem{theorem}{Theorem}[section]
\newtheorem{proposition}[theorem]{Proposition}
\newtheorem{lemma}[theorem]{Lemma}
\newtheorem{example}[theorem]{Example}
\begin{document}
\numberwithin{equation}{section} \numberwithin{figure}{section}
\title{Cover times of the massive random walk loop soup}
\author{Erik I. Broman{\footnote{Chalmers University of Technology and Gothenburg University, Gothenburg, Sweden, email: broman@chalmers.se, 
supported by the Swedish Research Council Grant
number: 2017-04266. }}\ and Federico Camia \footnote{New York University Abu Dhabi, Abu Dhabi, United Arab Emirates, email: federico.camia@nyu.edu}}
\maketitle
\begin{abstract}
We study cover times of subsets of $\BZ^2$ by a 
two-dimensional massive random walk loop soup. We consider a sequence 
of subsets $A_n \subset \BZ^2$ such that $|A_n| \to \infty$ and determine the distributional 
limit of their cover times $\CT(A_n).$ We allow the killing rate 
$\kappa_n$ (or equivalently the ``mass'') of the loop soup to depend on 
the size of the set $A_n$ to be covered. In particular, we determine
the limiting behavior of the cover times for inverse killing rates all the way up to
$\kappa_n^{-1}=|A_n|^{1-8/(\log \log |A_n|)},$ showing that it can be described by a Gumbel distribution. 
Since a typical loop in this model will have length at most 
of order $\kappa_n^{-1/2}=|A_n|^{1/2},$ if 
$\kappa_n^{-1}$ exceeded $|A_n|,$ the cover times of all points
in a tightly packed set $A_n$ (i.e. a square or close to a ball) would
presumably be heavily correlated, complicating the analysis. Our 
result comes close to this extreme case. \\

\noindent
{\bf Keywords:} Random walk loop soup; Cover times. \\
{\bf AMS subject classification:} 60K35, 60G50. 
\end{abstract}

\section{Introduction}

In this paper we are considering a covering problem for the 
massive random
walk loop soup in $\BZ^2.$ Covering problems can be traced back to Dvoretzky
(see \cite{Dvor}) who in 1956 studied the problem of covering the circle with a
collection of randomly placed arcs of prescribed lengths. Many variants of this problem
were later studied, and we mention in particular Janson's work \cite{Janson}. Informally,
Janson fixed a set $K\subset \BR^d$ and asked for the first time when sets of small diameter
arriving according to a Poisson process cover $K$ completely. In particular, Janson determined
the asymptotic distribution of this cover time as the diameter of the covering sets shrinks to 0. 

Later, Belius \cite{Belius} took a step in a new direction when he
studied a variant of the problem in which the sets used to cover $K$ are 
unbounded. Concretely, Belius fixed a set $K\subset \BZ^d$ and considered 
so-called random interlacements arriving according to a Poisson process
with unit rate. These random interlacements can informally be understood 
as bi-infinite random walk trajectories (see \cite{Sznitman1} for 
more on this model). For this reason, the questions were posed for
$d\geq 3,$  as otherwise the random walks are recurrent.
The use of unbounded sets in the covering means that the cover times of any
two points $x,y\in K$ are dependent regardless of the distance between
$x$ and $y.$ A similar problem was then studied 
by Broman and Mussini (see \cite{BM}, which also contains references to 
other papers on coverage problems), where now $K\subset \BR^d$ and
the objects used to cover $K$ are bi-infinite cylinders. In \cite{BM}, 
the fact that $K\subset \BR^d$ is (in general) not a finite nor a 
discrete set poses a new set of challenges.

In the present paper, we restrict our attention to $\BZ^2$ and consider the so-called
{\em massive} random walk loop soup. The term {\em massive} comes from the connection
between loop soups and field theory, particularly the Gaussian free field. A random walk loop
soup with a non-zero killing rate corresponds to a Gaussian free field with a non-zero mass term.
The loops are here generated by a random walk on $\BZ^2$ which at every step has a positive
probability of being killed (or landing in a cemetery state). As long
as this killing rate is strictly positive, it keeps very large loops from appearing near the origin
and ensures a nontrivial cover time; in contrast, with a zero killing rate every vertex
in $\BZ^2$ would be instantly covered. In this sense, our current project is very much
related to the work of \cite{Belius}, as we again study the trajectory of a random walk,
but we are introducing a killing in order to keep our walks from becoming too long.
One could also study the (somewhat easier) case of finite portions of trajectories
generated by killed random walks that do not form loops, but the random walk loop soup seems more natural and interesting, in particular because of its deep connection to
the Gaussian free field and to other models of statistical mechanics
(see e.g.\ \cite{Camia} and references therein). Aside from this connection, the random
walk loop soup is also an object of intrinsic interest as a prototypical example of a Poissonian
system amenable to rigorous analysis thanks to the vast body of knowledge on the behavior
of the random walk in two dimensions. We remark that while the 
usual set-up for the random walk loop soup is for the case
of finite graphs (see for instance 
\cite{LeJan} and \cite{Sznitman}), the only thing which is really 
needed is for the Green's function to be finite. In our full-space
setting, i.e. on $\BZ^2$, this is accomplished by having a non-zero
killing rate $\kappa$, see further the remark in Section \ref{sec:model}.

We will give a precise definition of the massive random walk loop soup in Section \ref{sec:model},
but in order to present our main results we give here a short informal explanation.
We will consider a measure $\mu$ on the set of all loops
(i.e. finite walks on $\BZ^2$ ending at the same vertex where they started). Because of the
non-zero killing rate, the measure $\mu$ does not give much weight to very long loops.
In particular, $\mu(\Gamma_x)<\infty$ where $\Gamma_x$ denotes the set of
loops containing $x\in \BZ^2.$ Furthermore, this measure is translation 
invariant so that, in particular, $\mu(\Gamma_x)=\mu(\Gamma_o)$ for every
$x \in \BZ^2,$ where $o$ denotes the origin of the square lattice. Since the quantity 
$\mu(\Gamma_o)$ will play a central role, we point out already here that it is a 
function of 
the Green's function at 0 of a random walk with killing rate $\kappa.$ 
Furthermore, as our main result (Theorem \ref{thm:main} below) concerns the 
case where $\kappa$ goes to 0
with the sizes of the sets we are covering, it follows that 
$\mu(\Gamma_o)$ implicitly 
depends on the sizes of the sets we are covering. 

The model that we study
here is then a Poisson process $\omega$ on $\Gamma \times [0,\infty),$ where 
$\Gamma=\bigcup_{x\in \BZ^2} \Gamma_x$ is the set of all loops. Furthermore, a pair
$(\gamma,s)\in \omega$ is a loop $\gamma$ along with a ``time-stamp'' $s$
denoting the time at which loop $\gamma$ arrives. We can then define the cover time of the set $A\subset \BZ^2$ by letting
\[
\CT(A):=\inf\left\{t>0: 
A \subset \bigcup_{(\gamma,s)\in \omega:s \leq t} \gamma\right\},
\]
where we abuse notation somewhat and identify the loop $\gamma$ with 
its trace, i.e. the vertices $x\in \BZ^2$ that it encounters. Our main result
concerns the asymptotic cover time of a growing sequence of sets 
$(A_n)_{n \geq 1},$ as follows.
\begin{theorem} \label{thm:main}
Consider a sequence of finite subsets $A_n \subset \BZ^2$ such that 
$|A_n| \uparrow \infty.$ Furthermore, assume that the killing rates $\kappa_n$ are 
such that, for every $n,$
$\exp(e^{32})\leq \kappa^{-1}_n\leq |A_n|^{1-8/(\log \log |A_n|)}.$
We then have that for $n$ large enough
\begin{equation}\label{eqn:basicineq}
\sup_{z \in \BR}|\BP(\mu(\Gamma_o)\CT(A_n)-\log |A_n|\leq z)
-\exp(-e^{-z})|
\leq 12|A_n|^{-\frac{1}{800 \mu(\Gamma_o)}}
\end{equation}
and therefore
\begin{equation}\label{eqn:mainthm2}
\mu(\Gamma_o)\CT(A_n)-\log |A_n| \underset{n \to \infty}{\longrightarrow} G \; \text{ in distribution,}
\end{equation}
where $G$ is a Gumbel distributed random variable. 
\end{theorem}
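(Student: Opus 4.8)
\medskip
\noindent\emph{Proof strategy.}
The plan is to recast the assertion as a Poisson approximation for the number of uncovered vertices, and then to control correlations via the explicit description of the loop measure in terms of the massive Green's function $G_{\kappa_n}$. Fix $z\in\BR$ and set $t=t_n(z):=(\log|A_n|+z)/\mu(\Gamma_o)$, so that
\[
\BP\bigl(\mu(\Gamma_o)\CT(A_n)-\log|A_n|\le z\bigr)=\BP\bigl(\CT(A_n)\le t\bigr)=\BP(N=0),\qquad N:=\sum_{x\in A_n}I_x,\quad I_x:=\mathbf{1}\{\CT(\{x\})>t\}.
\]
Since the loops visiting a fixed vertex $x$ arrive as a Poisson process of rate $\mu(\Gamma_x)=\mu(\Gamma_o)$, we have $\BP(I_x=1)=e^{-t\mu(\Gamma_o)}$, hence $\BE[N]=|A_n|e^{-t\mu(\Gamma_o)}=e^{-z}$ exactly, and it suffices to bound $\bigl|\BP(N=0)-e^{-e^{-z}}\bigr|$ by the right-hand side of \eqref{eqn:basicineq}, uniformly in $z$. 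Values of $z$ outside a window of width of order $\log\log|A_n|$ are disposed of directly: for large $z$ both $\BP(N=0)$ and $e^{-e^{-z}}$ lie in $[1-e^{-z},1]$, so their difference is at most $e^{-z}$; for very negative $z$ the second-moment bound $\BP(N=0)\le\operatorname{Var}(N)/\BE[N^2]$, together with the variance estimate obtained below, shows both quantities are negligible. One may therefore assume $e^{-z}$ lies between a negative and a positive power of $\log|A_n|$.

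To obtain a finite-range dependence structure I would split the soup $\omega=\omega^{\mathrm s}\sqcup\omega^{\mathrm b}$ into loops of length at most $L_n$ and loops of length exceeding $L_n$, where $L_n$ is a power of $|A_n|$ chosen small enough that the loops of $\omega^{\mathrm s}$ have spatial diameter $o(|A_n|^{1/2})$, yet large enough that $\mu(\Gamma^{\mathrm b}_o):=\mu(\{\text{loops through }o\text{ of length}>L_n\})$ is tiny (this quantity decays in $L_n$, polynomially while $L_n\lesssim\kappa_n^{-1}$ and faster beyond). Writing $N^{\mathrm s}$ for the number of vertices of $A_n$ left uncovered by $\omega^{\mathrm s}$ alone, we have $N^{\mathrm s}\ge N$ and, using independence of $\omega^{\mathrm s}$ and $\omega^{\mathrm b}$,
\[
0\le\BP(N=0)-\BP(N^{\mathrm s}=0)=\BP(N=0,\,N^{\mathrm s}>0)\le\sum_{x\in A_n}\BP(x\text{ unc.\ by }\omega^{\mathrm s})\,\BP(x\text{ cov.\ by }\omega^{\mathrm b})\le e^{-z}\,t\,\mu(\Gamma^{\mathrm b}_o)\,e^{t\mu(\Gamma^{\mathrm b}_o)},
\]
and the same computation gives $|\BE[N^{\mathrm s}]-e^{-z}|\le e^{-z}\bigl(e^{t\mu(\Gamma^{\mathrm b}_o)}-1\bigr)$; the choice of $L_n$ renders both errors small relative to the target. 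Since $I^{\mathrm s}_x$ is measurable with respect to the loops of $\omega^{\mathrm s}$ through $x$, all of diameter $\le L_n$, the variable $I^{\mathrm s}_x$ is independent of $(I^{\mathrm s}_y)_{|x-y|>L_n}$, so the Chen--Stein bound for Poisson approximation applies with vanishing third term:
\[
\bigl|\BP(N^{\mathrm s}=0)-e^{-\BE[N^{\mathrm s}]}\bigr|\le b_1+b_2,\qquad
b_1=\sum_{x}\sum_{y:\,|x-y|\le L_n}\BE[I^{\mathrm s}_x]\BE[I^{\mathrm s}_y],\qquad
b_2=\sum_{x}\sum_{\substack{y:\,|x-y|\le L_n\\ y\ne x}}\BE[I^{\mathrm s}_xI^{\mathrm s}_y].
\]
Here $\BE[I^{\mathrm s}_x]=e^{-t\mu(\Gamma^{\mathrm s}_o)}\le(1+o(1))e^{-z}/|A_n|$ and each $x$ has $O(L_n^2)$ vertices of $A_n$ within distance $L_n$, so $b_1=O\bigl(e^{-2z}L_n^2/|A_n|\bigr)$, which is below the target once $L_n$ is a small enough power of $|A_n|$.

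The core of the proof is the estimate for $b_2$. Splitting the loops through $x$ according to whether they also visit $y$ shows that the Poisson processes of loops visiting $x$ only, $y$ only, and both $x$ and $y$ are independent, whence, with $\Gamma_{x,y}:=\Gamma_x\cap\Gamma_y$,
\[
\BE[I^{\mathrm s}_xI^{\mathrm s}_y]=\BE[I^{\mathrm s}_x]\,\BE[I^{\mathrm s}_y]\,e^{\,t\,\mu(\Gamma^{\mathrm s}_{x,y})}\le\BE[I^{\mathrm s}_x]\,\BE[I^{\mathrm s}_y]\,e^{\,t\,\mu(\Gamma_{x,y})},
\]
and, just as in the one-point case, $\mu(\Gamma_{x,y})$ is an explicit increasing function of $G_{\kappa_n}(x,y)$, of the form $-\log\bigl(1-\rho_{x,y}^2\bigr)$ with $\rho_{x,y}=G_{\kappa_n}(x,y)/G_{\kappa_n}(o,o)$. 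Grouping the inner sum by $r=|x-y|$, using that $A_n$ has at most $\min(Cr,|A_n|)$ vertices at distance $r$ from $x$, and writing $e^{t\Phi(r)}=e^{z}|A_n|\,e^{-t(\mu(\Gamma_o)-\Phi(r))}$ with $\Phi(r):=\sup_{|u|=r}\mu(\Gamma_{o,u})$, one arrives at
\[
b_2\le C\,e^{-z}\sum_{r\ge1}\min(Cr,|A_n|)\,e^{-t\,(\mu(\Gamma_o)-\Phi(r))}.
\]
What is needed is a lower bound on the deficit $\mu(\Gamma_o)-\Phi(r)$: at $r=1$ one has $\mu(\Gamma_o)-\Phi(1)\ge c_0>0$ for an absolute constant $c_0$ (since $G_{\kappa_n}(o,o)-G_{\kappa_n}(o,e_1)\ge c$ forces $1-\rho_{o,e_1}\gtrsim 1/G_{\kappa_n}(o,o)$, so $\Phi(1)\le\mu(\Gamma_o)-\log 2+o(1)$); the deficit grows with $r$, roughly like $\log\log r$ for $1\ll r\lesssim\kappa_n^{-1/2}$ (from $1-\rho_{o,u}\asymp\log|u|/\log\kappa_n^{-1}$ and the potential-kernel asymptotics of $G_{\kappa_n}$); and for $r\gtrsim\kappa_n^{-1/2}$, where $G_{\kappa_n}(o,u)=O(1)$ decays exponentially, the deficit equals $\mu(\Gamma_o)$ up to a correction that is a negative power of $\log\kappa_n^{-1}$. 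Inserting these bounds, the $r=1$ term contributes $O\bigl(e^{-z}|A_n|^{-c_0/\mu(\Gamma_o)}\bigr)$, already of the shape appearing in \eqref{eqn:basicineq}, while the remaining terms are controlled using the decay of $\Phi$ together with the identity $\sum_{u}\mu(\Gamma_{o,u})=\int_{\Gamma_o}|\mathrm{range}(\gamma)|\,d\mu(\gamma)$ for the far range. It is exactly in bounding the terms near the correlation length $r\approx\kappa_n^{-1/2}$ — where $e^{t\Phi(r)}$ is a positive power of $|A_n|$ while $\min(Cr,|A_n|)$ has already reached order $\kappa_n^{-1/2}$ — that the hypotheses enter: the lower bound $\kappa_n^{-1}\ge\exp(e^{32})$ makes $\mu(\Gamma_o)=\log G_{\kappa_n}(o,o)$ a large constant, so that $1/(800\mu(\Gamma_o))$ and the $\log\kappa_n^{-1}$-corrections are harmless, while the upper bound $\kappa_n^{-1}\le|A_n|^{1-8/(\log\log|A_n|)}$ keeps quantities of order $\kappa_n^{-1}/|A_n|$ (hence also $\kappa_n^{-1/2}/|A_n|$) below $|A_n|^{-1/(800\mu(\Gamma_o))}$ and forces $\mu(\Gamma_o)\lesssim\log\log|A_n|$.

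Assembling the truncation error, $b_1$, $b_2$, and $\bigl|e^{-\BE[N^{\mathrm s}]}-e^{-e^{-z}}\bigr|\le|\BE[N^{\mathrm s}]-e^{-z}|$, and keeping track of the numerical constants, yields \eqref{eqn:basicineq}; as the right-hand side there is at most $12\exp\bigl(-\log|A_n|/(800\log\log|A_n|)\bigr)\to0$, the distributional convergence \eqref{eqn:mainthm2} to a Gumbel law is immediate. The principal difficulty is the $b_2$ estimate: one must control $\mu(\Gamma_{x,y})$ uniformly over all scales $1\le|x-y|\le L_n$ — near the diagonal, where only a small multiplicative deficit below $\mu(\Gamma_o)$ is available, and around $|x-y|\approx\kappa_n^{-1/2}$, where the number of competing pairs is largest — and it is precisely this multi-scale balance that the quantitative assumptions on $\kappa_n$ are designed to accommodate.
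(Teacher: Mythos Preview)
Your proposal takes a genuinely different route from the paper --- a direct Chen--Stein Poisson approximation with a spatial truncation, as opposed to the paper's two-phase argument, which conditions on the uncovered set at time $(1-\epsilon)u^*$, shows via a second-moment estimate that it is sparse and well separated, and then exploits the independence of the loop process in disjoint \emph{time} intervals to cover the remaining points. Your multi-scale analysis of the $b_2$ term is, in content, essentially the same as the paper's Lemmas~5.1--5.4 and Proposition~5.9; that part is fine.

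The gap is in the truncation step. Your claim that $\mu(\Gamma_o^{\mathrm b})=\mu(\{\gamma\in\Gamma_o:|\gamma|>L_n\})$ decays polynomially in $L_n$ while $L_n\lesssim\kappa_n^{-1}$ is false. In two dimensions one has, for $1\ll L_n\ll\kappa_n^{-1}$,
\[
\mu(\Gamma_o^{\mathrm b})\;=\;\mu(\Gamma_o)-\mu\bigl(\{\gamma\in\Gamma_o:|\gamma|\le L_n\}\bigr)\;\asymp\;\log\log\kappa_n^{-1}-\log\log L_n,
\]
which is easiest to see from the fact that loops through $o$ confined to a ball of radius $R$ have mass $\log G_{B(o,R)}^{o,o}\asymp\log\log R$; equivalently, the mass of loops of length $\sim M$ through $o$ behaves like $c/(M\log M)$ (the typical range of a loop of length $M$ is $\sim M/\log M$, not $O(1)$), so the tail integrates to a double logarithm, not a negative power. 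You may be thinking of the \emph{rooted}-at-$o$ loop mass $\sum_{n>L_n/2}\frac{L_{2n}}{2n}(4+\kappa)^{-2n}\sim 1/L_n$, but that is not the quantity that enters your error bound.

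The consequence is fatal for your scheme as written. If $L_n=|A_n|^\alpha$ and $\kappa_n^{-1}=|A_n|^\beta$ with $0<\alpha<\beta\le 1$, then $\mu(\Gamma_o^{\mathrm b})$ tends to the constant $\log(\beta/\alpha)$, while $t\asymp\log|A_n|/\mu(\Gamma_o)\asymp\log|A_n|/\log\log|A_n|\to\infty$. Hence $t\,\mu(\Gamma_o^{\mathrm b})\to\infty$ and your truncation error $e^{-z}\,t\,\mu(\Gamma_o^{\mathrm b})\,e^{t\mu(\Gamma_o^{\mathrm b})}$ blows up; so does $|\BE[N^{\mathrm s}]-e^{-z}|$. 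Taking $L_n$ close to $\kappa_n^{-1}$ (where the mass finally makes $\mu(\Gamma_o^{\mathrm b})$ small) does not help either: the worst-case diameter of a loop of length $L_n$ is of order $L_n$, so the neighbourhood of dependence in $b_1$ has $\sim L_n^2$ points, and $b_1\sim e^{-2z}L_n^2/|A_n|$ is then enormous when $\kappa_n^{-1}$ is close to $|A_n|$.

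A Chen--Stein approach can presumably be rescued, but it needs a further idea you have not supplied: one must also truncate by \emph{diameter}, using that a loop of length $\ell$ has diameter $O(\sqrt{\ell\log\ell})$ with overwhelming probability, so that with $L_n\gtrsim\kappa_n^{-1}\log|A_n|$ (to kill the long-loop mass via the killing rate) and dependence range $D_n\sim\sqrt{L_n\log L_n}\sim\kappa_n^{-1/2}(\log|A_n|)^{O(1)}$, one recovers $b_1\lesssim\kappa_n^{-1}(\log|A_n|)^{O(1)}/|A_n|$, which the hypothesis $\kappa_n^{-1}\le|A_n|^{1-8/\log\log|A_n|}$ makes small. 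This is substantially more work than your sketch suggests. The paper sidesteps the whole issue: by splitting in \emph{time} at $(1-\epsilon)u^*$ it gets exact independence for free, and the only correlation control needed is the second-moment bound on pairs (your $b_2$), which you have correctly identified as the heart of the matter.
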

\noindent
{\bf Remarks:}
It may seem that the bound on the right hand side of \eqref{eqn:basicineq}
does not depend on the killing rates $\kappa_n.$ 
However, equations \eqref{eqn:muGammaoest} and 
\eqref{eqn:muGammaolowest} show that, for $\kappa_n^{-1}$ large,
$\mu(\Gamma_o)$ is such that
$\vert \mu(\Gamma_0)-\log\log \kappa_n^{-1} \vert < 2$ (where the constant
$2$ is rather arbitrary). Furthermore, the remark 
after the proof of Lemma \ref{lemma:doublesumsmalldist} indicates that 
it may not be possible to drastically improve on the rate of 
convergence in \eqref{eqn:basicineq}, at least not by using the methods of this paper.

We assume the lower bound on $\kappa_n^{-1}$ in the statement of 
Theorem \ref{thm:main} out of convenience, and we claim that this can 
be relaxed (at least somewhat) by adding further details to our proofs. 
However, as we deem it natural to let $\kappa_n\to 0$ as 
$|A_n|\to \infty,$ we do not think it worthwhile to make the paper more
technical than it is in order to improve on this lower bound.

Furthermore, the upper bound on $\kappa_n^{-1}$ cannot be easily
improved, at least not substantially. Indeed, the discussion after 
the proof of 
Lemma \ref{lemma:doublesummedest2} indicates that while it may be 
possible to improve the upper bound by replacing the number 8 
with a slightly lower number, improving it further would require 
new ideas, if at all possible (see further the discussion below).

\medskip

\noindent
We continue this section with a more in depth discussion of our 
main result and of its proof.

It is straightforward to determine (see the start of 
Section \ref{sec:probabilities}) that the expected number of 
uncovered vertices $x\in A_n$ at time $\frac{\log |A_n|}{\mu(\Gamma_o)}$
is exactly $1$, and this is why the distributional limit  
in Theorem \ref{thm:main} may exist at all. Furthermore, it is easy 
to show (see \eqref{eqn:onepointdistribution}) that 
$\mu(\Gamma_o)\CT(o)$ is an exponentially distributed random variable 
with parameter 1. By using the well known fact that the maximum 
of independent exponentially distributed random variables (with 
parameters all equal to $1$) converges to a Gumbel distribution, 
we see that \eqref{eqn:basicineq} very much corresponds to the 
situation where the vertices of $A_n$ are covered independently.
Indeed, \eqref{eqn:mainthm2} means that
$\CT(A_n) \approx \frac{\log |A_n|+G}{\mu(\Gamma_o)}$ and, since 
$\lim_{\kappa \to 0}\mu(\Gamma_o)=\infty,$ we see that the cover time
will be concentrated around $\frac{\log |A_n|}{\mu(\Gamma_o)}$
with extremely small fluctuations of size $\frac{G}{\mu(\Gamma_o)}$.

Our main result is not surprising for large killing rates
(such as when $\kappa_n$ is constant). This is because
in such a regime large loops are strongly suppressed, which makes our model look similar to those studied
in \cite{BM}, \cite{Belius} and \cite{Janson}, where similar results are obtained.
The main difference in our work is that we let the killing rate 
$\kappa_n$ go to 0 as the size of the set that needs to be covered diverges.

In this situation, we expect that the behavior may depend on the geometry
(to be more precise, on the sparsity) of the sets $A_n,$ as well as on
how fast the killing rates $\kappa_n$ go to 0. 
To see this, assume first that we are in the ``compact'' case where $A_n$ is
(as close as possible to) a ball of radius $\sqrt{n}$,
and consider the following situations.  
\begin{itemize}
	\item[(i)] If $\kappa_n$ approaches zero very quickly,
	then for $n$ large, the diameter of 
	a typical loop intersecting $A_n$ is vastly larger than the linear
	size of $A_n$. It is then
	natural to expect that the first loop that arrives and touches 
	$A_n$ will in fact
	cover $A_n$ completely, and the re-scaled cover time will simply 
	be an exponential random variable as $n \to \infty.$ We give 
	further support to this claim in Section \ref{sec:examples} which
	includes a longer discussion along with two simple examples 
	(Examples \ref{ex:twopointwidesep} and \ref{ex:twopointneigh})
	further mentioned below.
	
	\item[(ii)] If instead $\kappa_n=|A_n|^{-\alpha}$ for some 
	$\alpha<1$, then for $n$ large, the diameter of 
	a typical loop intersecting $A_n$ is much smaller than the 
	linear size of $A_n.$ This will create enough independence 
	for the re-scaled cover time to converge to a Gumbel distribution
	as indeed Theorem \ref{thm:main} shows.

	\item[(ii)] If instead $\kappa_n \to 0$ at a rate which
 	is in between the two cases above, then the diameter of 
	a typical loop may still be larger than $A_n,$ but it will most 
	likely not cover the entirety of $A_n.$ It is not clear to us 
	how the cover time will behave in this intermediate case.
	
\end{itemize}

If $A_n$ is sparser or stretched (say in the form of a 
line of length $n$), then the potential 
different phases described above may simply occur at 
other thresholds. However, if we allow the 
separation between points to depend on the killing rate,
we can easily create an example (see Example 
\ref{ex:manypointswidesep})
in which the limit of the cover time is always a Gumbel. 

Having argued that one expects a different type of behavior for high and low killing rates, it is natural to ask where the threshold between those two regimes lies. For the ``compact'' case illustrated above,
one may guess that the threshold could be when 
$\kappa_n \sim |A_n|^{-1}$, so that the linear size of $A_n$ in 
this case is of the same order as the diameter of a typical large loop, which essentially corresponds to the \emph{correlation length} of the system (the separation distance at which two parts of the system become roughly independent). We remark that our main result comes very close to 
this supposed threshold. It may of course also be the case that the
correct threshold corresponds to an even quicker rate at which 
$\kappa_n \to 0,$ but if so, other methods than the ones employed in
this paper will be needed to get close to the threshold.

We note that, if one takes a scaling limit, re-scaling space by $1/\sqrt{A_n}$ and time by $1/{A_n}$, $\kappa_n=|A_n|^{-1}$ corresponds to the \emph{near-critical regime} and leads to a \emph{massive} Brownian loop soup (see \cite{Camia}).
In contrast to this, if $\kappa_n=|A_n|^{-\alpha}$ for some
$\alpha<1$ one expects the scaling limit to be trivial (meaning that no macroscopic loops survive), while if $\alpha>1$ one expects to obtain the \emph{critical} (i.e.\ scale invariant) Brownian loop soup (see \cite{Camia} for further discussion).

We briefly mention Examples \ref{ex:twopointwidesep} 
and \ref{ex:twopointneigh}, both considering sets $A_n$ consisting
of only two points. In Example \ref{ex:twopointwidesep}
the two points are
vastly separated, and the re-scaled cover time is shown to be 
the maximum of two independent exponential random variables. In 
contrast, Example \ref{ex:twopointneigh} deals with two points
that are neighbors, and the re-scaled cover time is shown to be 
a single exponential random variable.
This provides some additional support for the discussion above concerning the potential
different phases.

\medskip

The overall strategy of the proof of Theorem \ref{thm:main} can be informally
described as follows. At a time just before the expected cover time,
i.e. at time $(1-\epsilon)\frac{\log |A_n|}{\mu(\Gamma_o)}$, the not yet
covered region should consist of relatively few and well separated
vertices (see Proposition \ref{prop:HAeps}). These separated vertices 
will then be covered ``almost independently'' as the distances between
them are so large that we will  not see many loops that are large enough
to hit two such vertices. 
The main work will go into establishing the first of these two steps,
for which we will need to perform an involved and detailed second
moment estimate. Although the general strategy that we will follow 
has been used in \cite{Belius} and \cite{BM}, the main part of the
work and challenges here are different. This is intimately 
connected to the fact that the methods must be fine-tuned in 
order for Theorem \ref{thm:main} to work as close as possible to the
case $\kappa_n^{-1}=|A_n|.$ 

We end this introduction with an outline of the rest of the paper. 
In Section \ref{sec:model} we will define and discuss the random
walk loop soup. In Section \ref{sec:Green} we will obtain various 
estimates on the Green's function. However, to avoid breaking the flow of the paper, many of the calculations used
in this section will be deferred to an appendix (Appendix 
\ref{app:walks}). The results of Section \ref{sec:Green} will 
then be used in Section \ref{sec:probabilities} in order to obtain 
estimates on the probabilities involved in our second moment estimate.
The latter is done in Section \ref{sec:2ndmom} and in turn, 
these results are 
used in Section \ref{sec:proofofmain} to prove our main result.
Finally, Section \ref{sec:examples} contains the three examples 
and the discussion mentioned above.

\section{The loop soup} \label{sec:model}
The purpose of this section is twofold. Firstly, it will serve to introduce 
necessary notation and definitions. Secondly, it will serve as a brief 
introduction to random walk loop soups in the particular case that we
study in this paper. See also \cite{LT,LeJan,Sznitman,Camia} for 
an overview of the model studied in this paper.

We consider a discrete time simple symmetric random walk loop 
soup in $\BZ^2$ with killing rate $\kappa>0.$ In this setting, a 
walker positioned at $x$ at time $n$ will move to a neighbor $y$ 
with probability $1/(4+\kappa)$, and it will be killed (or equivalently 
moved to a cemetery state) with probability $\kappa/(4+\kappa).$ 
Next, $\gamma_r$ is a loop rooted at the vertex $x_0$ and of length 
$|\gamma_r|=N$ if 
\[
\gamma_r=((\gamma_r)_0,\ldots,(\gamma_r)_{N-1})
=(x_0,x_1,\ldots,x_{N-1}),
\]
for any sequence of neighboring vertices $x_0,\ldots,x_{N-1}$ 
where $x_{N-1}$ is a vertex neighboring $x_0.$ As usual 
(see \cite{LeJan} or \cite{Sznitman}), we only consider non-trivial loops,
i.e.\ only loops with $N\geq 2.$

We note that while the 
alternative notation $\gamma_r=(x_0,x_1,\ldots,x_{N-1},x_0)$ may seem
more natural (as it ``closes the loop''), it would be more cumbersome 
when we want to consider time-shifts of the loops. One could of course 
also define the loop in terms of the edges traversed, but as we consider 
the cover times for vertices, this seems less natural. 

Since we are considering loops in $\BZ^2,$ we must have that
$|\gamma_r|$ is an even number. 
The rooted measure $\mu_r$ of a fixed rooted loop $\gamma_r$ is then defined
to be 
\[
\mu_r(\gamma_r:|\gamma_r|=2n,(\gamma_r)_0
=x_0,(\gamma_r)_1=x_1,\ldots,(\gamma_r)_{2n-1}
=x_{2n-1})
=\frac{1}{2n}\left(\frac{1}{4+\kappa}\right)^{2n},
\]
for $n\geq 1.$ We see that $\mu_r(\gamma_r)$ is the probability
of the corresponding killed random walk on $\BZ^2$ multiplied by the 
factor $1/(2n)$. Intuitively, the reason for this modification is 
that (most) loops have $2n$ possible starting points and will therefore
contribute $2n$ times in the Poissonian construction below.

Proceeding, we find that the total rooted measure of all loops rooted 
at $x_0$ of length $2n$ becomes
\[
\mu_r(\{\gamma_r:|\gamma_r|=2n,(\gamma_r)_0=x_0\})
=\frac{L_{2n}}{2n}\left(\frac{1}{4+\kappa}\right)^{2n},
\]
where $L_{2n}$ denotes the number of loops rooted at $o$ and of length $2n.$ 

In order to define the (unrooted) loop measure we start by defining 
equivalence classes of loops by saying that the rooted 
loops $\gamma_r, \gamma_r'$ are equivalent 
if we can obtain one from the other by a time-shift. More formally, 
if $|\gamma_r|=2n$, then $\gamma_r \sim \gamma_r'$ if there 
exists some $0\leq  m <2n$ such that 
\[
((\gamma_r)_0,\ldots,(\gamma_r)_{2n-1})
=((\gamma'_r)_m, \ldots, (\gamma'_r)_{2n-1},(\gamma'_r)_{0},\ldots,(\gamma'_r)_{m-1}).
\]
We see that the equivalence class of $\gamma_r$ contains exactly 
$\frac{2n}{\mult(\gamma_r)}$ rooted loops. Here, $\mult(\gamma_r)$ is
the largest $k$ such that $\gamma_r$ can be written as the concatenation of $k$ identical loops.
We will think of an equivalence class as an unrooted loop (i.e. \ as a sequence of
vertices with a specified order but with no specified first vertex), and we shall 
denote such a loop by $\gamma.$ We will occasionally write 
$\gamma_r \in \gamma$ to indicate that the rooted loop $\gamma_r$ is a member
of the equivalence class $\gamma$.

We then define the (unrooted) measure $\mu$ on loops by letting
$\mu(\gamma)$ equal the weight of the rooted measure for a member of the 
equivalence class of $\gamma,$ multiplied by the number of members in this 
equivalence class. That is, 
\begin{eqnarray} \label{eqn:mudef}
\lefteqn{\mu(\gamma)=\sum_{\gamma_r\in \gamma}\mu_r(\gamma_r)
=\sum_{\gamma_r\in \gamma} \frac{1}{2n}\left(\frac{1}{4+\kappa}\right)^{2n}} \\
& & =\frac{1}{2n}\left(\frac{1}{4+\kappa}\right)^{2n}\frac{2n}{\mult(\gamma)}
=\left(\frac{1}{4+\kappa}\right)^{2n}\frac{1}{\mult(\gamma)}, \nonumber
\end{eqnarray}
where $\mult(\gamma)=\mult(\gamma_r)$ for any (and therefore every) $\gamma_r \in \gamma.$
Equation \eqref{eqn:mudef} thus defines our measure $\mu.$
We choose to work with unrooted loops and the corresponding measure because this is the canonical choice (see \cite{LeJan,Sznitman}) leading to the Brownian loop soup in the scaling limit \cite{LT}.

We now let $\Gamma_{x}^{2n}$ denote the set of all unrooted loops $\gamma$
such that $x\in \gamma$ and $|\gamma|=2n.$ Then, we define 
\[
\Gamma_{x}=\bigcup_{n=1}^\infty \Gamma_{x}^{2n}.
\]
We observe that
\begin{equation}\label{eqn:muGammao}
\mu(\Gamma_o)
=\sum_{n=1}^\infty \sum_{\gamma \in \Gamma_o^{2n}} \mu(\gamma)
=\sum_{n=1}^\infty \sum_{\gamma \in \Gamma_o^{2n}}
\left(\frac{1}{4+\kappa}\right)^{2n}\frac{1}{\mult(\gamma)}.
\end{equation}
It will turn out that the quantity $\mu(\Gamma_o)$ will play an essential role
in the rest of the paper. However, while \eqref{eqn:muGammao} gives a concrete
and easily understandable expression for $\mu(\Gamma_o)$ it is not the most 
useful, and we will instead use \eqref{eqn:mulogG} below.

Returning to our measure $\mu$ we now let $\omega$ denote a Poisson
point process on $\Gamma \times [0,\infty)$ with intensity measure 
$\mu \times dt$. Here, $\Gamma=\bigcup_{x\in \BZ^2} \Gamma_x$ simply 
denotes the set of all unrooted loops in 
$\BZ^2.$ We shall think of a pair $(\gamma,t)\in \omega$  as a loop
$\gamma$ along with a ``time-stamp'' $t$ which corresponds to the time
at which the loop arrived. We also let 
\[
\omega_t:=\{\gamma\in \Gamma: (\gamma,s)\in \omega \textrm{ for some } s\leq t\},
\]
so that $\omega_t$ is the collection of loops that have arrived before time 
$t.$ It will be convenient to introduce the notation 
\[
\CC_t=\{x\in \BZ^2:x\in \gamma \textrm{ for some } \gamma\in \omega_t\},
\]
so that $\CC_t$ is the covered region at time $t>0.$

\medskip
\noindent
{\bf Remark:}
In this section, and in the rest of the paper, we will use equations
such as \eqref{eqn:probGreenbasic} below from \cite{LeJan} and 
\cite{Sznitman} involving the Green's function.
This requires a comment, since in \cite{LeJan} and 
\cite{Sznitman}, those equations are derived working with 
finite graphs, while we consider the infinite square lattice.
The use of finite graphs in \cite{LeJan} and 
\cite{Sznitman} is largely a matter of convenience 
(\cite{LeJanpersonal}), since it allows us to write 
explicit formulas in terms of determinants of finite matrices. 
Nevertheless, the final formulas in terms of the Green's function are 
valid whenever the Green’s function is well defined.

To see this why this is the case in the specific example of the 
massive random walk loop soup on the square lattice, the reader 
can think of coupling a massive loop soup on $\mathbb{Z}^2$ and a 
loop soup on $[-L,L]^2 \cap \mathbb{Z}^2$, obtained from the first 
one by removing all loops that exit $[-L,L]^2$. If one focuses on the 
restrictions of the two processes to a finite window 
$[-L_0,L_0]^2 \cap \mathbb{Z}^2$, for any fixed $L_0$, and sends 
$L \to \infty$, the presence of a positive killing rate implies 
that the restriction of the second process to 
$[-L_0,L_0]^2 \cap \mathbb{Z}^2$ converges to that of the first.
On the other hand, for the second process, one can use the 
formulas from \cite{LeJan} and \cite{Sznitman} for any finite $L$. 
Moreover, the expressions in those formulas converge as 
$L \to \infty$ because the Green’s function stays finite in that 
limit, due to the positive killing rate.

\medskip

As just remarked, there is a close connection between the loop soup 
and the Green's function
for the killed simple symmetric random walk on $\BZ^2$.  
It is known (see (4.18) on p. 74 of \cite{Sznitman} or p. 45 of \cite{LeJan}) that 
(for the simple symmetric random walk with killing rate $\kappa$)
\begin{equation}\label{eqn:probGreenbasic}
\BP(x\cap \CC_u=\emptyset)
=\BP(\not \exists \gamma \in \omega_u: o\in \gamma)
=\exp(-u\mu(\Gamma_o)) 
=\left(\frac{1}{(4+\kappa)g(o,o)}\right)^u
\end{equation}
where the first equality follows from the construction of the Poisson 
process and translation invariance.

Here, $g(x,y)$ is a Green's function given by (see \cite{Sznitman} 
p. 9, (1.26))
\[
g(x,y)=\int_0^\infty \frac{1}{4+\kappa} \BP(X_t^x=y) dt
\]
where $(X^x_t)_{t>0}$ is a continuous time random walk (started at $x$)
which waits an 
exponential time with parameter 1 and then picks any neighbor with 
probability $1/(4+\kappa)$ and is killed with probability $\kappa/(4+\kappa).$
Clearly, if $N_t$ denotes the number of steps that this random walk has 
taken by time $t,$ we then have that 
\begin{eqnarray*} 
\lefteqn{g(x,y)=\int_0^\infty \frac{1}{4+\kappa} \BP(X_t^x=y) dt}\\
& & =\frac{1}{4+\kappa}\int_0^\infty  
\sum_{n=0}^\infty \BP(X_t^x=y |N_t=n)\BP(N_t=n)dt \nonumber\\
& & =\frac{1}{4+\kappa} \sum_{n=0}^\infty
\int_0^\infty  \BP(S_n^{x,\kappa}=y)\frac{t^n}{n!}e^{-t}dt \nonumber \\
& & =\frac{1}{4+\kappa} \sum_{n=0}^\infty \BP(S_n^{x,\kappa}=y)
\int_0^\infty  \frac{t^n}{n!}e^{-t}dt
=\frac{1}{4+\kappa} \sum_{n=0}^\infty \BP(S_n^{x,\kappa}=y), \nonumber
\end{eqnarray*}
where $S_n^{x,\kappa}$ denotes a discrete time random walk started at $x$
and with killing rate $\kappa.$

Combining the above we obtain the formula
\begin{equation} \label{eqn:probGreen}
\BP(x\cap \CC_u=\emptyset)
=(G^{o,o})^{-u},
\end{equation}
where 
\[
G^{x,y}=\sum_{n=0}^\infty \BP(S_n^{x,\kappa}=y).
\]
We note also that from \eqref{eqn:probGreenbasic} and
\eqref{eqn:probGreen} we have that
$e^{-\mu(\Gamma_o)}=\frac{1}{G^{o,o}}$ and so  
\begin{equation} \label{eqn:mulogG}
\mu(\Gamma_o)=\log G^{o,o}.
\end{equation}
As mentioned above, this equation will turn out to be much more useful 
for us than \eqref{eqn:muGammao}. Observe that 
$\omega_u\cap \Gamma_x \cap \Gamma_y$ is the set of loops in 
$\omega_u$ which intersect both $x$ and $y.$ We have 
(according to \cite{LeJan}, p. 45) that
\begin{eqnarray}\label{eqn:LeJanident}
\lefteqn{\BP(\omega_u\cap \Gamma_x \cap \Gamma_y=\emptyset)}\\
& & =\exp(-u \mu(\Gamma_x \cap \Gamma_y))
=\left(1-\left(\frac{g(x,y)}{g(o,o)}\right)^2\right)^u
=\left(1-\left(\frac{G^{x,y}}{G^{o,o}}\right)^2\right)^u. \nonumber
\end{eqnarray}
We conclude that 
\begin{eqnarray} \label{eqn:probxyuncov}
\lefteqn{\BP(\{x,y\} \cap \CC_u=\emptyset)
=\exp(-u \mu(\Gamma_x \cup \Gamma_y))
=\exp(-2u\mu(\Gamma_x)+u\mu(\Gamma_x \cap \Gamma_y))} \nonumber \\
& & 
=\frac{\BP(x\cap \CC_u=\emptyset)^2}{\BP(\not \exists \gamma \in \omega_u: x,y\in \gamma)}
=\frac{\left(\frac{1}{G^{o,o}}\right)^{2u}}
{\left(1-\left(\frac{G^{x,y}}{G^{o,o}}\right)^2\right)^u}
=
\left(\frac{1}{(G^{o,o})^2-(G^{x,y})^2}\right)^u.
\end{eqnarray}
Much of the effort of this paper will be focused around obtaining good
estimates for \eqref{eqn:probxyuncov}, and for other similar 
quantities. For this reason we shall need to study some aspects of the 
Green's function $G^{x,y}$ in detail, and then use these results to obtain 
good estimates of probabilities such as 
$\BP(\{x,y\} \cap \CC_u=\emptyset).$
In order to structure this, we choose to devote Section \ref{sec:Green}
exclusively to results concerning Green's functions. These results 
are then used in Section \ref{sec:probabilities} to obtain our estimates
for relevant probabilities.

\section{Green's function estimates} \label{sec:Green}
We will write $S_n$ in place of $S_n^{o,o}$ and we start by observing that 
\begin{equation}\label{eqn:Goxexpress}
G^{o,x}=\sum_{n=0}^\infty \BP(S^{o,\kappa}_{n}=x)
=\sum_{n=|x|}^\infty \left(\frac{4}{4+\kappa}\right)^{n}\BP(S_{n}=x)
=\sum_{n=|x|}^\infty\left(\frac{1}{4+\kappa}\right)^{n}W^{o,x}_{n}
\end{equation}
where $W^{o,x}_n$ denotes the number of walks of length $n$ starting
at the origin and ending at~$x.$
In \eqref{eqn:Goxexpress} and in the rest of the paper, for
$x=(x_1,x_2) \in \mathbb{Z}^2$, $|x|$ denotes $|x_1|+|x_2|$.
It is clear from \eqref{eqn:probxyuncov} that in order to 
bound $\BP(\{x,y\} \cap \CC_u=\emptyset)$ we should strive to 
find good estimates for $G^{o,o},G^{o,x}$ and the difference
\begin{equation} \label{eqn:Gdiff}
G^{o,o}-G^{o,x}
=\sum_{n=0}^\infty\left(\frac{1}{4+\kappa}\right)^{n}W^{o,o}_{n}
-\sum_{n=|x|}^\infty\left(\frac{1}{4+\kappa}\right)^{n}W^{o,x}_{n},
\end{equation}
and this is the main purpose of this section.
The main results of the current section are Propositions 
\ref{prop:Greendiffest1} and \ref{prop:Goxest1}.
Proposition \ref{prop:Greendiffest1} will 
give estimates on \eqref{eqn:Gdiff} for small and moderate values of 
$|x|$, while Propositions \ref{prop:Goxest1} 
will provide estimates on $G^{o,x}$ for large values of $|x|.$ 
We mention that Propositions \ref{prop:Goxest1}
is somewhat specialized to work for large values of $\kappa^{-1}.$ 

To avoid breaking the flow of the paper, the proofs of elementary 
lemmas concerning the number of walks $W_n^{o,x},$ and estimates 
on partial sums of \eqref{eqn:Goxexpress}, 
are deferred to Appendix \ref{app:walks}.

We will now focus on 
Proposition \ref{prop:Greendiffest1}, which will be proved  
through two lemmas. Firstly, Lemma \ref{lemma:WxWo} will allow us
to estimate $G^{o,o}-G^{o,x}$ in terms of partial sums of $G^{o,o}.$
Then, we will use Lemma \ref{lemma:partialsumlowest} to quantify 
these bounds. In order to prove Proposition \ref{prop:Goxest1}
we will use a consequence (Lemma \ref{lemma:sumWoxest}) of the 
local central limit theorem, along with Lemmas 
\ref{lemma:WxWo} and \ref{lemma:partialsumupest}.

We can now state our first lemma which is proved in Appendix 
\ref{app:walks}.
\begin{lemma} \label{lemma:WxWo}
For any $x\in \BZ^2$ such that $|x|$ is even, we have that 
\[
W_{2n}^{o,x}\leq W_{2n}^{o,o}
\]
for every $n\geq 0.$
\end{lemma}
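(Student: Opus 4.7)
The cleanest route is a midpoint decomposition followed by Cauchy--Schwarz; no parity gymnastics or Fourier inversion is needed, though the latter would also work.

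The plan is as follows. First I would decompose a walk of length $2n$ by its position at time $n$: any such walk from $o$ to $x$ is uniquely the concatenation of a length-$n$ walk from $o$ to some intermediate point $y\in\BZ^2$ with a length-$n$ walk from $y$ to $x$. Using translation invariance, $W_n^{y,x}=W_n^{o,x-y}$, and using the symmetry obtained by negating every step of a walk, $W_n^{o,-y}=W_n^{o,y}$. This yields the two identities
\[
W_{2n}^{o,x}=\sum_{y\in\BZ^2}W_n^{o,y}\,W_n^{o,x-y},\qquad W_{2n}^{o,o}=\sum_{y\in\BZ^2}W_n^{o,y}\,W_n^{o,-y}=\sum_{y\in\BZ^2}(W_n^{o,y})^2.
\]
Both sums are finite since $W_n^{o,y}=0$ once $|y|>n$.

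Then I apply Cauchy--Schwarz to the first identity:
\[
W_{2n}^{o,x}\;\leq\;\Bigl(\sum_{y}(W_n^{o,y})^2\Bigr)^{1/2}\Bigl(\sum_{y}(W_n^{o,x-y})^2\Bigr)^{1/2}.
\]
The change of variables $z=x-y$ turns the second factor into $\sum_z(W_n^{o,z})^2=W_{2n}^{o,o}$, and the first factor is also $\sqrt{W_{2n}^{o,o}}$, so the product equals $W_{2n}^{o,o}$, as required. The hypothesis that $|x|$ is even is only cosmetic: for odd $|x|$, parity forces $W_{2n}^{o,x}=0$ and the bound is trivial.

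There is no real obstacle in this argument; the only thing to verify with care is that the midpoint decomposition and the reversal/translation symmetries of $W_n^{\cdot,\cdot}$ are all valid on $\BZ^2$, which is immediate. As a sanity check, one can alternatively note that $W_{2n}^{o,x}=4^{2n}\BP(S_{2n}=x)$ and use the Fourier representation $\BP(S_{2n}=x)=(2\pi)^{-2}\int_{[-\pi,\pi]^2}[\tfrac{1}{2}(\cos\theta_1+\cos\theta_2)]^{2n}\cos(\theta\cdot x)\,d\theta$, where the integrand is non-negative when $x=o$ and dominates the integrand for general $x$ in absolute value; this gives the same conclusion. I would stick with the combinatorial Cauchy--Schwarz proof because it fits the elementary flavor of Appendix~\ref{app:walks}.
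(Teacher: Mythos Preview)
Your proof is correct and is essentially the same as the paper's: both decompose the walk at its midpoint and apply Cauchy--Schwarz, the paper phrasing it in terms of transition probabilities and reversibility while you phrase it in terms of walk counts and translation/reflection symmetry. The two versions are interchangeable line by line.
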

Our second lemma (again proved in Appendix \ref{app:walks}) 
provides lower bounds on the partial sums of 
\[
G^{o,o}=\sum_{n=0}^{\infty} 
\left(\frac{1}{4+\kappa}\right)^{2n}W_{2n}^{o,o}.
\]

\begin{lemma} \label{lemma:partialsumlowest}
For any $\kappa>0$ and any $N\geq 1$ we have that 
\begin{equation} \label{eqn:partialsumlowest1}
\sum_{n=0}^{N-1} \left(\frac{1}{4+\kappa}\right)^{2n}W^{o,o}_{2n}
\geq 1+\frac{\log N}{\pi}-\frac{N\kappa}{\pi}-\frac{1}{3 \pi}.
\end{equation}
Furthermore, 
\begin{equation} \label{eqn:Goolowest}
G^{o,o}\geq \frac{\log \kappa^{-1}}{\pi}+1-\frac{4}{3 \pi}.
\end{equation}
\end{lemma}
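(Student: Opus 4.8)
The plan is to establish \eqref{eqn:partialsumlowest1} first and then derive \eqref{eqn:Goolowest} from it by an appropriate choice of $N$. For the partial sum, I would begin with the elementary expansion
\[
\left(\frac{1}{4+\kappa}\right)^{2n}W^{o,o}_{2n}
=\left(\frac{4}{4+\kappa}\right)^{2n}\BP(S_{2n}=o)
=\left(1-\frac{\kappa}{4+\kappa}\right)^{2n}p_{2n},
\]
where $p_{2n}:=\BP(S_{2n}=o)$ is the simple-random-walk return probability. The two ingredients are then: (a) a lower bound on $\sum_{n=0}^{N-1}p_{2n}$ of the form $1+\tfrac{\log N}{\pi}-\tfrac{1}{3\pi}$, coming from the classical asymptotics $p_{2n}\sim \tfrac{1}{\pi n}$ for the planar walk (the $n=0$ term contributes the leading $1$, and $\sum_{n=1}^{N-1}\tfrac{1}{\pi n}\approx \tfrac{\log N}{\pi}$, with the $-\tfrac{1}{3\pi}$ absorbing the error in the harmonic-sum/integral comparison and in the local CLT remainder); and (b) controlling the cost of the killing factor $(1-\tfrac{\kappa}{4+\kappa})^{2n}\le 1$ from below. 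For (b) I would use $(1-\tfrac{\kappa}{4+\kappa})^{2n}\ge 1-\tfrac{2n\kappa}{4+\kappa}\ge 1-\tfrac{n\kappa}{2}$ (Bernoulli) and then $\sum_{n=0}^{N-1} n\cdot p_{2n}\cdot\tfrac{\kappa}{2}$ can be bounded crudely since $p_{2n}\le \tfrac{C}{n}$, giving a loss of order $N\kappa$; matching constants should yield exactly the $-\tfrac{N\kappa}{\pi}$ term. The precise bookkeeping of the additive constant $\tfrac{1}{3\pi}$ is presumably where one must be careful, but it is routine once the right explicit bound on $p_{2n}$ (likely quoted from Appendix \ref{app:walks} or a standard reference) is in hand.

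For \eqref{eqn:Goolowest}, the idea is that $G^{o,o}=\sum_{n=0}^\infty (\tfrac{1}{4+\kappa})^{2n}W^{o,o}_{2n}\ge \sum_{n=0}^{N-1}(\cdots)$ for every $N$, so I may optimize \eqref{eqn:partialsumlowest1} over $N$. The right-hand side $1+\tfrac{\log N}{\pi}-\tfrac{N\kappa}{\pi}-\tfrac{1}{3\pi}$ is maximized (over real $N$) at $N=\kappa^{-1}$, where it equals $1+\tfrac{\log \kappa^{-1}}{\pi}-\tfrac{1}{\pi}-\tfrac{1}{3\pi}=\tfrac{\log\kappa^{-1}}{\pi}+1-\tfrac{4}{3\pi}$. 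Choosing $N=\lceil \kappa^{-1}\rceil$ (or $\lfloor\kappa^{-1}\rfloor$) and checking that the rounding only helps or costs a negligible amount — more precisely, that at $N=\lceil\kappa^{-1}\rceil$ one still gets at least $\tfrac{\log\kappa^{-1}}{\pi}+1-\tfrac{4}{3\pi}$ — finishes the argument. One should note $N\ge 1$ requires $\kappa^{-1}\ge 1$, which is harmless since the interesting regime has $\kappa^{-1}$ large; for $\kappa$ large \eqref{eqn:Goolowest} may be vacuous or trivial and can be dispatched separately.

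The main obstacle I anticipate is purely the sharp tracking of constants in step (a): getting the leading $\tfrac{\log N}{\pi}$ with an honest, fully explicit (non-asymptotic) error no worse than $-\tfrac{1}{3\pi}$ requires a concrete two-sided estimate $\tfrac{1}{\pi n}(1-\tfrac{c}{n})\le p_{2n}\le \tfrac{1}{\pi n}(1+\tfrac{c}{n})$ valid for all $n\ge 1$ (or a suitable substitute), plus care in the Euler–Maclaurin comparison $\sum_{n=1}^{N-1}\tfrac1n$ vs.\ $\log N$. This is exactly the kind of elementary-but-delicate walk-counting estimate the authors have deferred to Appendix \ref{app:walks}, so in the body of the proof I would simply invoke those lemmas and assemble the pieces as above.
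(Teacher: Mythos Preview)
Your plan is correct and is essentially the paper's own argument: the paper also writes $p_{2n}=4^{-2n}W^{o,o}_{2n}=\binom{2n}{n}^2 4^{-2n}$, uses Robbins' Stirling bound to get $p_{2n}\ge \frac{1}{\pi n}e^{-1/(3n)}$, compares the resulting sum to an integral (using that $e^{-1/(3x)}/x$ is decreasing), and then optimizes at $N\approx \kappa^{-1}$ for \eqref{eqn:Goolowest}. The only organizational difference is that the paper keeps the killing factor $(4/(4+\kappa))^{2n}$ inside the integral and handles it via the substitution $y=\kappa x$ together with $\log(1+u)\le 2u$ and $e^{-u}\ge 1-u$, whereas you peel it off first via Bernoulli's inequality; both routes deliver the same constants (your loss term $\tfrac{\kappa}{2}\sum_{n\le N-1} n\,p_{2n}$ is indeed at most $\tfrac{N\kappa}{\pi}$ once one uses the matching Stirling upper bound $n\,p_{2n}\le \tfrac{1}{\pi}e^{1/(12n)}$, and your step (a) gives exactly $\tfrac{\log N}{\pi}-\tfrac{1}{3\pi}$ via $\int_1^N \tfrac{e^{-1/(3x)}}{\pi x}\,dx\ge \int_1^N \tfrac{1-1/(3x)}{\pi x}\,dx$). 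For \eqref{eqn:Goolowest} the paper takes $N=\lfloor\kappa^{-1}\rfloor+1$ with the ``$(N-1)\kappa$'' variant of \eqref{eqn:partialsumlowest1}, which handles the rounding and the large-$\kappa$ case in one stroke; your $N=\lceil\kappa^{-1}\rceil$ plus a separate check for large $\kappa$ (where the bound is trivial since $G^{o,o}\ge 1$) is an equivalent way to finish.
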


We are now ready to state and prove our first result concerning 
the difference \eqref{eqn:Gdiff}. Proposition 
\ref{prop:Greendiffest1} will be ``basic'' in the sense that 
the statements are not the strongest possible, but they are sufficient 
for our purposes.
Later, we will prove Proposition \ref{prop:Goxest1} which will 
be more specialized.
\begin{proposition} \label{prop:Greendiffest1}
For any $\kappa>0$ we have that 
\begin{equation} \label{eqn:GoxGooratiolim}
\lim_{|x|\to \infty} G^{o,x} = 0.
\end{equation}
For any $|x|\geq 1$ we have that  
\begin{equation} \label{eqn:Greengenlest}
G^{o,o}-G^{o,x} \geq \frac{3}{4}.
\end{equation}
If $4\leq |x|\leq 2\kappa^{-1},$ and $\kappa^{-1}\geq 2,$ 
we have that 
\begin{equation} \label{eqn:Greenmedest}
G^{o,o}-G^{o,x} \geq \frac{\log |x|}{\pi}.
\end{equation}

\end{proposition}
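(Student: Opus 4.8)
The plan is to prove the three claims of Proposition~\ref{prop:Greendiffest1} essentially in the order stated, using Lemmas~\ref{lemma:WxWo} and~\ref{lemma:partialsumlowest} as the main tools. For \eqref{eqn:GoxGooratiolim}, the point is that $G^{o,x}=\sum_{n\geq |x|}(4+\kappa)^{-n}W^{o,x}_n$ is a tail of a convergent series: since $\kappa>0$ is fixed, the full sum $\sum_n (4+\kappa)^{-n}W^{o,o}_n=G^{o,o}$ converges (each $W^{o,o}_n\leq 4^n$ gives a geometric bound), and by Lemma~\ref{lemma:WxWo} we have $W^{o,x}_n\leq W^{o,o}_n$ for the relevant parities, so $G^{o,x}$ is bounded by the tail $\sum_{n\geq |x|}(4+\kappa)^{-n}W^{o,o}_n$, which tends to $0$ as $|x|\to\infty$. (One must note $G^{o,x}=0$ unless $|x|$ and $n$ have compatible parity, so Lemma~\ref{lemma:WxWo} applies to the nonzero terms.)

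For the two lower bounds on the difference \eqref{eqn:Gdiff}, the strategy is to write
\[
G^{o,o}-G^{o,x}=\sum_{n=0}^{\infty}\left(\frac{1}{4+\kappa}\right)^{n}\left(W^{o,o}_n-W^{o,x}_n\right),
\]
where $W^{o,x}_n$ is interpreted as $0$ for $n<|x|$ (so the first $|x|$ terms of $G^{o,o}$ survive in full), and for $n\geq|x|$ the increments $W^{o,o}_n-W^{o,x}_n$ are nonnegative by Lemma~\ref{lemma:WxWo}. Hence
\[
G^{o,o}-G^{o,x}\ \geq\ \sum_{n=0}^{|x|-1}\left(\frac{1}{4+\kappa}\right)^{n}W^{o,o}_n.
\]
For \eqref{eqn:Greengenlest} with $|x|\geq 1$: the $n=0$ term already contributes $W^{o,o}_0=1$, and since the walk with killing returns to $o$ in two steps with probability $4\cdot(4+\kappa)^{-2}$ the $n=2$ term contributes another $4(4+\kappa)^{-2}$; a crude bound shows $1+4(4+\kappa)^{-2}\geq 3/4$ trivially (indeed the $n=0$ term alone exceeds $3/4$), so in fact the claim follows immediately from retaining just the $n=0$ term — I would keep the argument this short and remark that one has much more room. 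Actually, to match the constant $3/4$ exactly one need only observe $W^{o,o}_0=1>3/4$.

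For the medium-range bound \eqref{eqn:Greenmedest}, assuming $4\leq|x|\leq 2\kappa^{-1}$ and $\kappa^{-1}\geq 2$, I would apply the partial-sum lower bound \eqref{eqn:partialsumlowest1} of Lemma~\ref{lemma:partialsumlowest} with $N=|x|/2$ (valid since $|x|$ is even and $|x|/2\geq 2\geq 1$), noting that $\sum_{n=0}^{|x|-1}(4+\kappa)^{-n}W^{o,o}_n=\sum_{n=0}^{|x|/2-1}(4+\kappa)^{-2n}W^{o,o}_{2n}$ because odd-length loops from $o$ to $o$ vanish. This gives
\[
G^{o,o}-G^{o,x}\ \geq\ 1+\frac{\log(|x|/2)}{\pi}-\frac{(|x|/2)\kappa}{\pi}-\frac{1}{3\pi}.
\]
The constraint $|x|\leq 2\kappa^{-1}$ forces $(|x|/2)\kappa\leq 1$, so the subtracted term is at most $1/\pi$; it remains to check
\[
1+\frac{\log(|x|/2)}{\pi}-\frac{1}{\pi}-\frac{1}{3\pi}\ \geq\ \frac{\log|x|}{\pi},
\]
i.e.\ $1-\tfrac{1}{\pi}-\tfrac{1}{3\pi}\geq \tfrac{\log 2}{\pi}$, which is a true numerical inequality ($1\geq (1+\tfrac13+\log 2)/\pi\approx 0.645$). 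The main obstacle here is bookkeeping: making sure the parity restrictions and the index shift between $\sum_{n=0}^{|x|-1}$ and the even-indexed sum of Lemma~\ref{lemma:partialsumlowest} are handled correctly, and that the hypotheses $4\leq|x|$ and $\kappa^{-1}\geq 2$ are exactly what is needed to absorb the error terms — no deep estimate is required beyond what the two lemmas already provide.
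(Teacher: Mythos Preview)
Your argument is essentially the paper's argument for the case of even $|x|$, and for that case it is correct. The genuine gap is that you never address odd $|x|$, and the core inequality you rely on,
\[
G^{o,o}-G^{o,x}\ \geq\ \sum_{n=0}^{|x|-1}\left(\frac{1}{4+\kappa}\right)^{n}W^{o,o}_n,
\]
fails there. Lemma~\ref{lemma:WxWo} compares $W^{o,x}_{2n}$ and $W^{o,o}_{2n}$ only for even $|x|$. When $|x|$ is odd, the nonzero terms of $G^{o,x}$ sit at odd $n$, where $W^{o,o}_n=0$; hence the increments $W^{o,o}_n-W^{o,x}_n$ at odd $n\geq |x|$ are strictly negative, and you cannot simply discard them. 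In particular, for $|x|=1$ your ``the $n=0$ term alone gives $1>3/4$'' is wrong: one does \emph{not} get $G^{o,o}-G^{o,x}\geq 1$ for a neighbor $x$ of $o$, and the constant $3/4$ in \eqref{eqn:Greengenlest} is there precisely because of this loss. Likewise, in your proof of \eqref{eqn:Greenmedest} the clause ``valid since $|x|$ is even'' is an unjustified assumption, not part of the hypotheses.

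The paper handles odd $|x|$ by the one-step relation $G^{o,x}=\frac{1}{4+\kappa}\sum_{y\sim x}G^{o,y}$, which expresses $G^{o,x}$ as an average over neighbors (all of which have even $|\cdot|$), and then applies the even case to each $G^{o,o}-G^{o,y}$. For $|x|=1$ this averaging picks up one neighbor equal to $o$ itself, which is why the bound drops to $3/4$; for odd $|x|\geq 5$ in \eqref{eqn:Greenmedest} the averaging over $|x|\pm 1$ produces $\frac{1}{2\pi}\log(|x|^2-1)$ plus error terms, and one checks numerically that this still dominates $\frac{\log|x|}{\pi}$. This neighbor-averaging step is the missing idea; without it your proof covers only half the cases.
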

\noindent
\begin{proof}
The first result, i.e.\ \eqref{eqn:GoxGooratiolim}, is an immediate consequence
of (3.1) since we clearly have that 
\[
G^{o,x}
=\sum_{n=|x|}^\infty \left(\frac{4}{4+\kappa}\right)^{n}\BP(S_{n}=x)
\leq \sum_{n=|x|}^\infty\left(\frac{4}{4+\kappa}\right)^{n} \to 0,
\] 
as $|x|\to \infty.$

We now turn to \eqref{eqn:Greenmedest} and we will 
also assume (momentarily) that $|x|$ is an even number.
We have that 
\begin{equation}\label{eqn:GooGoxdiff}
G^{o,o}-G^{o,x}
= \sum_{n=0}^\infty \left(\frac{1}{4+\kappa}\right)^{2n} 
(W_{2n}^{o,o}-W_{2n}^{o,x})
\geq \sum_{n=0}^{\frac{|x|}{2}-1} \left(\frac{1}{4+\kappa}\right)^{2n} 
W_{2n}^{o,o},
\end{equation}
by using Lemma \ref{lemma:WxWo} and the fact that $W_{2n}^{o,x}=0$
for $n\leq |x|/2-1.$ We can then use \eqref{eqn:partialsumlowest1} 
with $N=|x|/2$ to see that 
\begin{eqnarray*}
\lefteqn{G^{o,o}-G^{o,x}
\geq 1+\frac{\log(|x|/2)}{\pi}-\frac{\kappa |x|/2}{\pi}
-\frac{1}{3 \pi}}\\
& & \geq \frac{\log |x|}{\pi}+1-\frac{\log 2}{\pi}
-\frac{1}{\pi}-\frac{1}{3\pi}
\geq \frac{\log |x|}{\pi},
\end{eqnarray*}
where we used the assumption that $|x|\leq 2 \kappa^{-1}$ in the 
second inequality. This proves \eqref{eqn:Greenmedest} in the case 
when $|x|$ is even. 

We will have to take some extra care when $|x|$ is odd.
Therefore, assume that $x=(2l+1,2k)$ with 
$5\leq |x|\leq 2\kappa^{-1}$ and 
observe that by \eqref{eqn:Goxexpress},
\begin{eqnarray*}
\lefteqn{G^{o,(2l+1,2k)}
=\sum_{n=0}^\infty\left(\frac{1}{4+\kappa}\right)^{n}W^{o,(2l+1,2k)}_{n}}
 \\
& & =\sum_{n=0}^\infty\left(\frac{1}{4+\kappa}\right)^{n}
\left(W^{o,(2l,2k)}_{n-1}+W^{o,(2l+2,2k)}_{n-1}
+W^{o,(2l+1,2k-1)}_{n-1}+W^{o,(2l+1,2k+1)}_{n-1}\right)\\
& & =\frac{1}{4+\kappa} \left(G^{o,(2l,2k)}+G^{o,(2l+2,2k)}
+G^{o,(2l+1,2k-1)}+G^{o,(2l+1,2k+1)}\right).
\end{eqnarray*}
We then conclude that 
\begin{eqnarray*}
\lefteqn{G^{o,o}-G^{o,(2l+1,2k)}}\\
& & \geq \frac{1}{4}\left(4G^{o,o}-G^{o,(2l,2k)}-G^{o,(2l+2,2k)}
-G^{o,(2l+1,2k-1)}-G^{o,(2l+1,2k+1)}\right) \\
& & \geq \frac{1}{2} \sum_{n=0}^{(|x|-1)/2-1} 
\left(\frac{1}{4+\kappa}\right)^{2n}W_{2n}^{o,o}
+\frac{1}{2}\sum_{n=0}^{(|x|+1)/2-1} 
\left(\frac{1}{4+\kappa}\right)^{2n}W_{2n}^{o,o},
\end{eqnarray*}
since at most two of the neighbors of $x$ are closer to $o$ than $x$. 
By again using \eqref{eqn:partialsumlowest1} we now see that 
\begin{eqnarray*}
\lefteqn{G^{o,o}-G^{o,(2l+1,2k)}}\\
& & \geq \frac{1}{2} 
\left(1+\frac{\log (|x|-1)-\log 2}{\pi}
-\frac{(|x|-1)\kappa}{2 \pi}-\frac{1}{3 \pi}\right)\\
& & \hspace{4mm}+\frac{1}{2}\left(1+\frac{\log (|x|+1)-\log 2}{\pi}
-\frac{(|x|+1)\kappa}{2 \pi}-\frac{1}{3 \pi}\right) \\
& & =\frac{\log (|x|^2-1)}{2\pi}+1-\frac{\log 2}{\pi}
-\frac{|x| \kappa}{2 \pi}-\frac{1}{3 \pi}.
\end{eqnarray*}
Furthermore, we have that $y^2-1\geq \frac{8y^2}{9}$ for every $y\geq 3$
and therefore, 
\begin{eqnarray} \label{eqn:averaging}
\lefteqn{G^{o,o}-G^{o,(2l+1,2k)}}\\
& & \geq \frac{\log \frac{8}{9}+\log |x|^2}{2\pi}
+1-\frac{\log 2}{\pi}
-\frac{|x| \kappa}{2 \pi}-\frac{1}{3 \pi}
\nonumber \\
& & \geq \frac{\log |x|}{\pi} +1+\frac{\log 8-\log 9-\log 4}{2 \pi}
-\frac{1}{\pi}-\frac{1}{3\pi}
\geq \frac{\log |x|}{\pi}, \nonumber
\end{eqnarray}
where we used that $|x|\leq 2\kappa^{-1}$ in the penultimate inequality.
By symmetry, the same estimate holds when $G^{o,(2l+1,2k)}$ is 
replaced by $G^{o,(2l,2k+1)},$
and this establishes \eqref{eqn:Greenmedest}.

For \eqref{eqn:Greengenlest}, consider first the case when $x=(1,0)$ 
and observe that, as above,
\[
G^{o,o}-G^{o,(1,0)} \geq \frac{1}{4}\left(4G^{o,o}-G^{o,o}-G^{o,(2,0)}
-G^{o,(1,1)}-G^{o,(1,-1)}\right)
\geq \frac{3}{4},
\]
where we used \eqref{eqn:GooGoxdiff} to conclude that 
$G^{o,o}-G^{o,x}\geq W_0^{o,o}=1$ whenever $|x|\geq 2$ is even.
The statement then follows for all $|x|=1$ by symmetry.
Next, if $x$ such that $|x|\geq 2$ is even, we again observe that 
by \eqref{eqn:GooGoxdiff} $G^{o,o}-G^{o,x}\geq W_0^{o,o}=1.$
For odd values of $|x|\geq 3$ we can sum over the neighbors 
to reach the same conclusion.
\end{proof}

Our next lemma will give upper bounds on the tails of the sums in 
$G^{o,o}.$ The first part (i.e. \eqref{eqn:Wootailest}) will be 
used to prove Proposition \ref{prop:Goxest1}, while 
the second part (i.e. \eqref{eqn:Wootailest_alt}) will be used to prove
Propositions \ref{prop:Goxest2} and \ref{prop:Goxest1},
and the last part (i.e. \eqref{eqn:Gooupest}) will be used in later
sections. The proof is again deferred until Appendix \ref{app:walks}.

\begin{lemma} \label{lemma:partialsumupest}
For any $0<\kappa<1,$ and any $N\in\{1,2,\ldots\}$ such that 
$N\kappa<1$, we have that 
\begin{equation} \label{eqn:Wootailest}
\sum_{n=N+1}^\infty \left(\frac{1}{4+\kappa}\right)^{2n} W_{2n}^{o,o}
\leq \frac{\log (N\kappa)^{-1}}{\pi}+\frac{1}{6 \pi N}+4.
\end{equation}
On the other hand, if $N\kappa\geq 1/2,$ then 
\begin{equation} \label{eqn:Wootailest_alt}
\sum_{n=N+1}^\infty \left(\frac{1}{4+\kappa}\right)^{2n} 
W_{2n}^{o,o}\leq 4e^{-N\kappa/4}.
\end{equation}
Furthermore, 
\begin{equation} \label{eqn:Gooupest}
G^{o,o}\leq \frac{\log \kappa^{-1}}{\pi}+2.
\end{equation}
\end{lemma}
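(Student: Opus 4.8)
The plan is to prove all three displays together by the same device: we split the sum $G^{o,o}=\sum_{n=0}^\infty (4+\kappa)^{-2n}W_{2n}^{o,o}$ at an index $N$ and control the tail $\sum_{n=N+1}^\infty (4+\kappa)^{-2n}W_{2n}^{o,o}$, then use \eqref{eqn:Wootailest} (together with the trivial head estimate below) to get \eqref{eqn:Gooupest}. The starting point is the local central limit theorem, which gives $\BP(S_{2n}=o)=\frac{1}{\pi n}(1+o(1))$ for the simple symmetric random walk, hence $W_{2n}^{o,o}=4^{2n}\BP(S_{2n}=o)\le \frac{C 4^{2n}}{n}$ for all $n\ge 1$ with an explicit $C$ (this bound — an elementary consequence of the LCLT — is exactly the kind of auxiliary estimate the paragraph before the lemma says is relegated to Appendix \ref{app:walks}, so I would quote it from there). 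Writing $\theta:=\bigl(\tfrac{4}{4+\kappa}\bigr)^2=\bigl(1+\tfrac{\kappa}{4}\bigr)^{-2}$, the tail becomes
\[
\sum_{n=N+1}^\infty \Bigl(\tfrac{1}{4+\kappa}\Bigr)^{2n} W_{2n}^{o,o}
=\sum_{n=N+1}^\infty \theta^{n}\,\BP(S_{2n}=o)
\le C\sum_{n=N+1}^\infty \frac{\theta^{n}}{n}.
\]

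For \eqref{eqn:Wootailest}, I would bound $\sum_{n=N+1}^\infty \theta^n/n$ by splitting at $n\approx \kappa^{-1}$: for $N+1\le n\le \lfloor\kappa^{-1}\rfloor$ we use $\theta^n\le 1$ and $\sum \frac1n\le \log\bigl(\tfrac{1}{N\kappa}\bigr)+O(1)$ (comparison with $\int \frac{dt}{t}$, using $N\kappa<1$ so the range is nonempty and $\log((\kappa^{-1})/N)=\log(N\kappa)^{-1}$); for $n>\kappa^{-1}$ we use $\frac1n\le \kappa$ and $\sum_{n>\kappa^{-1}}\theta^n\le \frac{\theta^{\kappa^{-1}}}{1-\theta}$, and since $1-\theta\asymp \kappa$ this second piece is $O(1)$. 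Collecting constants and absorbing the error terms into the stated $\frac{1}{6\pi N}+4$ (the constant $4$ is generous precisely to swallow all the $O(1)$ contributions, the LCLT error, and the discrepancy between $C$ and $\frac1\pi$) gives \eqref{eqn:Wootailest}. For \eqref{eqn:Wootailest_alt}, when $N\kappa\ge 1/2$ the geometric decay dominates: $\theta^n\le e^{-n\kappa/4}$ (since $\log\theta^{-1}=2\log(1+\kappa/4)\ge \kappa/4$ for $\kappa\le 4$), so $\sum_{n=N+1}^\infty \frac{\theta^n}{n}\le \frac1{N}\sum_{n>N}e^{-n\kappa/4}\le \frac{1}{N}\cdot\frac{e^{-N\kappa/4}}{1-e^{-\kappa/4}}$, and $\frac{1}{N(1-e^{-\kappa/4})}\le \frac{c}{N\kappa}\le 2c$ is bounded; tuning constants (and possibly replacing $N\kappa/4$ by a slightly smaller exponent at an intermediate step to gain room) yields the clean bound $4e^{-N\kappa/4}$.

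Finally, \eqref{eqn:Gooupest} follows by choosing $N=\lfloor\kappa^{-1}\rfloor$ (legitimate since then $N\kappa<1$), so that \eqref{eqn:Wootailest} controls the tail by $\frac{\log(N\kappa)^{-1}}{\pi}+\frac{1}{6\pi N}+4\le \tfrac1\pi\log\tfrac{1}{N\kappa}+O(1)$, while for the head $\sum_{n=0}^{N}(4+\kappa)^{-2n}W_{2n}^{o,o}\le \sum_{n=0}^{N}\theta^n\BP(S_{2n}=o)\le 1+C\sum_{n=1}^{N}\frac1n\le 1+C(\log N+1)$. Since $N\le \kappa^{-1}$ and $N\kappa\ge \tfrac12$ (as $\kappa<1$ forces $\lfloor\kappa^{-1}\rfloor\ge 1\ge \kappa^{-1}/2$ when $\kappa^{-1}<2$, and for $\kappa^{-1}\ge 2$ clearly $\lfloor\kappa^{-1}\rfloor\ge\kappa^{-1}/2$), both $\log N$ and $\log(N\kappa)^{-1}$ are at most $\log\kappa^{-1}+O(1)$, and adding head and tail gives $G^{o,o}\le \tfrac1\pi\log\kappa^{-1}+(\text{const})$; the constant works out to at most $2$ once one notes that the $\frac1\pi$ from the LCLT main term appears in \emph{both} head and tail but the two logarithmic contributions are $\log N$ and $\log(N\kappa)^{-1}=\log\kappa^{-1}-\log N$, which \emph{add up} to $\log\kappa^{-1}$ with coefficient $\frac1\pi$, not $\frac2\pi$. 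The main obstacle is purely bookkeeping: getting the constants small enough — especially landing on $+2$ in \eqref{eqn:Gooupest} and on the specific $\frac{1}{6\pi N}+4$ in \eqref{eqn:Wootailest} — requires a sufficiently sharp, fully explicit version of the LCLT bound on $\BP(S_{2n}=o)$ with controlled error, which is why the delicate parts are pushed into Appendix \ref{app:walks} (via Lemmas on $W_n^{o,x}$ and on partial sums of \eqref{eqn:Goxexpress}); the argument here is just the assembly.
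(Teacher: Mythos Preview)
Your approach to \eqref{eqn:Wootailest} and \eqref{eqn:Wootailest_alt} is essentially the paper's: bound $W_{2n}^{o,o}=\binom{2n}{n}^2$ by $\frac{4^{2n}}{\pi n}e^{1/(12n)}$ via Stirling (this is the explicit LCLT constant you need), then control $\sum_{n>N}\theta^n/(\pi n)$ with $\theta=(4/(4+\kappa))^2\le e^{-\kappa/2}$. The paper uses integral comparison and the substitution $y=x\kappa$ rather than your discrete splitting at $n\approx\kappa^{-1}$, but the two are interchangeable and lead to the same bounds.

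There is, however, a genuine gap in your derivation of \eqref{eqn:Gooupest}. Applying \eqref{eqn:Wootailest} with $N=\lfloor\kappa^{-1}\rfloor$ gives a tail bounded by $\frac{\log(N\kappa)^{-1}}{\pi}+\frac{1}{6\pi N}+4$, and your head estimate contributes at least $1+\frac{1}{\pi}\log N$. The logarithms do combine to $\frac{1}{\pi}\log\kappa^{-1}$ as you note, but the additive constants sum to at least $5$, not $2$; the $+4$ in \eqref{eqn:Wootailest} is already too large to recover $+2$ by any choice of $N$. The paper does \emph{not} deduce \eqref{eqn:Gooupest} from \eqref{eqn:Wootailest}. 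Instead it writes
\[
G^{o,o}=1+\left(\tfrac{1}{4+\kappa}\right)^{2}W_2^{o,o}+\sum_{n\ge 2}\left(\tfrac{1}{4+\kappa}\right)^{2n}W_{2n}^{o,o}
\le 1+\tfrac14+\int_{\kappa}^{\infty}e^{-y/4}\Bigl(\tfrac{1}{\pi y}+\tfrac{\kappa}{6\pi y^2}\Bigr)\,dy,
\]
keeps the first two terms exact, and evaluates the integral by splitting at $y=1$: the piece on $[\kappa,1]$ gives $\frac{\log\kappa^{-1}}{\pi}+\frac{1}{6\pi}$, the piece on $[1,\infty)$ gives at most $\frac{2}{\pi}$, and $1+\frac14+\frac{1}{6\pi}+\frac{2}{\pi}\le 2$. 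The point is that the generous $+4$ in \eqref{eqn:Wootailest} comes from bounding $\int_1^\infty e^{-y/4}\,dy=4e^{-1/4}$ crudely by $4$; for \eqref{eqn:Gooupest} you must instead keep the sharper bound $\frac{2}{\pi}$ on $\int_1^\infty e^{-y/4}\bigl(\frac{1}{\pi y}+\frac{\kappa}{6\pi y^2}\bigr)\,dy$.
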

\noindent
Our next proposition is elementary and presumably far from 
optimal, but useful nevertheless.
\begin{proposition} \label{prop:Goxest2}
Assume that $|x|\geq 2 \kappa^{-1}$ and that 
$\kappa^{-1}\geq e^{30}.$ Then we have that 
\[
G^{o,x}\leq \frac{G^{o,o}}{2}.
\]
\end{proposition}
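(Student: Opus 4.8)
The plan is to bound $G^{o,x}$ for $|x| \geq 2\kappa^{-1}$ by splitting the defining sum at the scale $N \sim \kappa^{-1}$, where the geometric killing factor starts to dominate. Recall from \eqref{eqn:Goxexpress} that
\[
G^{o,x} = \sum_{n=|x|}^\infty \left(\frac{4}{4+\kappa}\right)^n \BP(S_n = x),
\]
and the sum only starts at $n = |x| \geq 2\kappa^{-1}$. Since $\BP(S_n=x) \leq 1$, the crudest bound is $G^{o,x} \leq \sum_{n=|x|}^\infty \left(\frac{4}{4+\kappa}\right)^n = \frac{4+\kappa}{\kappa}\left(\frac{4}{4+\kappa}\right)^{|x|}$. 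Using $\left(\frac{4}{4+\kappa}\right)^{|x|} = \left(1 - \frac{\kappa}{4+\kappa}\right)^{|x|} \leq \exp\!\left(-\frac{\kappa |x|}{4+\kappa}\right) \leq \exp\!\left(-\frac{\kappa |x|}{5}\right)$ and $|x| \geq 2\kappa^{-1}$, this gives $G^{o,x} \leq \frac{5}{\kappa} e^{-2/5} \cdot$ (something), which is \emph{not} small enough by itself since $\kappa^{-1}$ is large. So the crude bound must be combined with the lower bound on $G^{o,o}$.

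The cleaner route: use Lemma~\ref{lemma:WxWo} together with the tail estimate \eqref{eqn:Wootailest_alt}. Indeed, writing $G^{o,x} = \sum_{n \geq |x|/2}\left(\frac{1}{4+\kappa}\right)^{2n} W_{2n}^{o,x} \leq \sum_{n \geq |x|/2}\left(\frac{1}{4+\kappa}\right)^{2n} W_{2n}^{o,o}$ (when $|x|$ is even; the odd case is handled by summing over the four neighbors as in the proof of Proposition~\ref{prop:Greendiffest1}), I can apply \eqref{eqn:Wootailest_alt} with $N = |x|/2 - 1 \geq \kappa^{-1} - 1$, so that $N\kappa \geq 1 - \kappa \geq 1/2$ holds using $\kappa^{-1} \geq e^{30} \geq 2$. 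This yields $G^{o,x} \leq 4 e^{-N\kappa/4} \leq 4 e^{-(1-\kappa)/4} \leq 4 e^{-1/8}$, an absolute constant. On the other hand, Lemma~\ref{lemma:partialsumlowest} gives $G^{o,o} \geq \frac{\log\kappa^{-1}}{\pi} + 1 - \frac{4}{3\pi} \geq \frac{30}{\pi}$ when $\kappa^{-1} \geq e^{30}$. Since $\frac{30}{\pi} > 8 e^{-1/8} = 2 \cdot (4 e^{-1/8})$, we get $G^{o,x} \leq 4 e^{-1/8} \leq \frac{1}{2}\cdot\frac{30}{\pi} \leq \frac{G^{o,o}}{2}$, as desired.

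The main thing to check carefully is the odd-$|x|$ case: if $|x|$ is odd then $W_{2n}^{o,x} = 0$ and one must instead write $x$ in terms of its neighbors via the one-step decomposition $G^{o,x} = \frac{1}{4+\kappa}\sum_{y \sim x} G^{o,y}$, apply the even-case bound to each $G^{o,y}$ (noting each neighbor $y$ has $|y| \geq |x| - 1 \geq 2\kappa^{-1} - 1$, which still gives $N\kappa \geq 1/2$ after adjusting constants, perhaps requiring a slightly larger threshold than $e^{30}$ — but this is absorbed by the hypothesis $\kappa^{-1} \geq e^{30}$ with room to spare), and then $G^{o,x} \leq \frac{1}{4+\kappa}\cdot 4 \cdot 4e^{-N'\kappa/4}$, still an absolute constant that is dominated by $G^{o,o}/2$. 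No step is genuinely hard; the only care needed is bookkeeping of the constants in the threshold on $\kappa^{-1}$ to ensure the geometric tail bound applies uniformly to all relevant lengths, and confirming the resulting absolute constant is below $\frac{1}{2}\cdot\frac{\log\kappa^{-1}}{\pi}$ at $\kappa^{-1} = e^{30}$.
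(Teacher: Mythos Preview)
Your proposal is correct and follows essentially the same route as the paper: bound $G^{o,x}$ via Lemma~\ref{lemma:WxWo} and the tail estimate \eqref{eqn:Wootailest_alt} to get an absolute constant (the paper settles for $G^{o,x}\leq 4$ where you get $4e^{-1/8}$), then invoke \eqref{eqn:Goolowest} to show $G^{o,o}$ exceeds twice that constant when $\kappa^{-1}\geq e^{30}$; the odd-$|x|$ case is handled in both by the one-step neighbor decomposition. The only cosmetic difference is that the paper writes out the neighbor sum explicitly rather than appealing to $G^{o,x}=\frac{1}{4+\kappa}\sum_{y\sim x}G^{o,y}$, but the content is identical.
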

\begin{proof}
Assume first that $|x|$ is even. By \eqref{eqn:Goxexpress} and 
Lemma \ref{lemma:WxWo} we then have that
\[
G^{o,x}=\sum_{n=|x|}^\infty \left(\frac{1}{4+\kappa}\right)^n W_n^{o,x}
=\sum_{n=\frac{|x|}{2}}^\infty \left(\frac{1}{4+\kappa}\right)^{2n} W_{2n}^{o,x}
\leq 
\sum_{n=\frac{|x|}{2}}^\infty \left(\frac{1}{4+\kappa}\right)^{2n} W_{2n}^{o,o}.
\]
Using this, and then applying \eqref{eqn:Wootailest_alt}
with $N=|x|/2-1$ so that $N \kappa = (|x|/2-1)\kappa = (\kappa^{-1}-1)\kappa \geq 1-\kappa \geq 1/2$
(using our assumptions on $|x|$ and $\kappa^{-1}$) we have that,
\[
G^{o,x}
\leq 
\sum_{n=\frac{|x|}{2}}^\infty \left(\frac{1}{4+\kappa}\right)^{2n} 
W_{2n}^{o,o}
\leq 4 e^{-(|x|/2-1)\kappa/4}
\leq 4 e^{-(\kappa^{-1}-1)\kappa/4}\leq 4
\]
again, since we assume that $|x|\geq 2 \kappa^{-1}$ and $\kappa^{-1} \geq e^{30}$.

If instead $|x|$ is odd, we can sum over the neighbors $y \sim x$ 
of $x$ and use Lemma \ref{lemma:WxWo} to see that
\begin{eqnarray*} 
\lefteqn{G^{o,x}=\sum_{n=|x|}^\infty 
\left(\frac{1}{4+\kappa}\right)^{n} W_n^{o,x} 
=\sum_{n=|x|}^\infty 
\left(\frac{1}{4+\kappa}\right)^{n} \sum_{y \sim x}W_{n-1}^{o,y}}\\
& &= \sum_{n=|x|-1}^\infty 
\left(\frac{1}{4+\kappa}\right)^{n+1} \sum_{y\sim x}W_n^{o,y}
= \frac{1}{4+\kappa} \sum_{y\sim x}
\sum_{n=|x|-1}^\infty 
\left(\frac{1}{4+\kappa}\right)^{n} W_{n}^{o,y}\nonumber \\
& & = \frac{1}{4+\kappa} \sum_{y\sim x}
\sum_{n=\frac{|x|-1}{2}}^\infty 
\left(\frac{1}{4+\kappa}\right)^{2n} W_{2n}^{o,y}
\leq \frac{1}{4+\kappa} \sum_{y\sim x}
\sum_{n=\frac{|x|-1}{2}}^\infty 
\left(\frac{1}{4+\kappa}\right)^{2n} W_{2n}^{o,o}\nonumber \\
& & =\frac{4}{4+\kappa}
\sum_{n=\frac{|x|-1}{2}}^\infty 
\left(\frac{1}{4+\kappa}\right)^{2n} W_{2n}^{o,o}
\leq \sum_{n=\frac{|x|-1}{2}}^\infty 
\left(\frac{1}{4+\kappa}\right)^{2n} W_{2n}^{o,o}.\nonumber 
\end{eqnarray*}
Using this and \eqref{eqn:Wootailest_alt} we then see that 
\[
G^{o,x}
\leq 
\sum_{n=\frac{|x|-1}{2}}^\infty 
\left(\frac{1}{4+\kappa}\right)^{2n} W_{2n}^{o,o}
\leq 4 e^{-((|x|-1)/2-1)\kappa/4}
\leq 4 e^{-(\kappa^{1}-3/2)\kappa/4}\leq 4.
\]
Furthermore, by \eqref{eqn:Goolowest} we have that 
\[
G^{o,o}\geq 
\frac{\log \kappa^{-1}}{\pi}+1-\frac{4}{3 \pi}
\geq 10,
\]
since $\kappa^{-1}\geq e^{30}$ by assumption,
and so the statement follows.
\end{proof}

For future reference, we note that by 
\eqref{eqn:Gooupest} and \eqref{eqn:mulogG}, 
we have that 
\begin{equation}\label{eqn:muGammaoest}
\mu(\Gamma_o)=\log G^{o,o}
\leq \log \left(\frac{\log \kappa^{-1}}{\pi}+2\right).
\end{equation}
Similarly, by using \eqref{eqn:Goolowest} in place of 
\eqref{eqn:Gooupest} we have that
\begin{equation}\label{eqn:muGammaolowest}
\mu(\Gamma_o)=\log G^{o,o}
\geq \log \left(\frac{\log \kappa^{-1}}{\pi}+1-\frac{4}{3\pi}\right).
\end{equation}

Intuitively, it should be the case that for $x$ to have a ``decent 
chance of being hit'' by a walk of length $n$ starting at the 
origin $o,$ then $n$ should be of size of order close to $|x|^2$ or larger. 
Therefore, we see from \eqref{eqn:Goxexpress} that the contribution to 
$G^{o,x}$ from walks which are considerably shorter than $|x|^2$ should 
be small. This is made precise in our next lemma (again the 
proof is deferred to Appendix \ref{app:walks})
where the first statement 
\eqref{eqn:sumWoxest2} shows that the total
contribution to $G^{o,x}$ coming from walks that are shorter than 
$|x|^2/\log |x|$ is negligible as $|x|\to \infty.$ 
Both statements of this lemma will 
be useful in order to obtain a good estimate
for $G^{o,x}$ in Proposition \ref{prop:Goxest1}.

\begin{lemma} \label{lemma:sumWoxest}
For every $|x|$ large enough, we have that 
\begin{equation} \label{eqn:sumWoxest2}
\sum_{n=|x|}^{\left\lfloor \frac{|x|^2}{2\log |x|}\right\rfloor}
\left(\frac{1}{4+\kappa}\right)^{n} W_n^{o,x}
\leq 3|x|^{-1},
\end{equation}
and that
\begin{equation} \label{eqn:sumWoxest3}
\sum_{n=\left\lfloor \frac{|x|^2}{2\log |x|}\right\rfloor+1}^{|x|^2}
\left(\frac{1}{4+\kappa}\right)^{n} W_n^{o,x}
\leq 2\left(1-\frac{\kappa}{4+\kappa}\right)^{\frac{|x|^2}{2 \log |x|}}.
\end{equation}
\end{lemma}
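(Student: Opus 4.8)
The plan is to estimate both sums by comparing the killed walk to the ordinary simple random walk and then invoking (local) central limit theorem bounds together with the geometric decay coming from the killing. For the first statement \eqref{eqn:sumWoxest2}, I would rewrite the summand using \eqref{eqn:Goxexpress}, namely $\left(\frac{1}{4+\kappa}\right)^n W_n^{o,x} = \left(\frac{4}{4+\kappa}\right)^n \BP(S_n = x) \le \BP(S_n = x)$, where $S_n$ is the ordinary simple random walk. Thus it suffices to show $\sum_{n=|x|}^{\lfloor |x|^2/(2\log|x|)\rfloor} \BP(S_n = x) \le 3|x|^{-1}$ for $|x|$ large. Here the key input is a large-deviation/off-diagonal estimate: for $n \le |x|^2/(2\log|x|)$, the endpoint $x$ is atypically far for a walk of length $n$ (since a typical displacement is of order $\sqrt{n} \le |x|/\sqrt{2\log|x|} \ll |x|$), so $\BP(S_n = x)$ should decay like $e^{-c|x|^2/n}$ up to polynomial factors. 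Summing this over $n$ in the stated range, the dominant term comes from the largest $n \approx |x|^2/(2\log|x|)$, which contributes roughly $e^{-c'\log|x|} = |x|^{-c'}$; choosing constants so that $c' > 1$ (concretely, the Gaussian exponent $|x|^2/(2n) \ge \log|x|$ forces a decay at least $|x|^{-1}$ times a manageable prefactor) and absorbing the polynomial and the length of the sum into the constant $3$ gives the bound. I would most likely cite a standard ballot-type or reflection bound, or a Chernoff bound on each coordinate of $S_n$, for the off-diagonal estimate rather than reprove it.

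For the second statement \eqref{eqn:sumWoxest3}, the point is different: now $n$ ranges over $[\lfloor |x|^2/(2\log|x|)\rfloor + 1, |x|^2]$, and on this range $\BP(S_n = x)$ is no longer exponentially small, so we cannot rely on the off-diagonal bound. Instead we exploit the killing factor directly. Write again $\left(\frac{1}{4+\kappa}\right)^n W_n^{o,x} = \left(\frac{4}{4+\kappa}\right)^n \BP(S_n = x) = \left(1 - \frac{\kappa}{4+\kappa}\right)^n \BP(S_n = x)$. For $n \ge |x|^2/(2\log|x|)$ we have $\left(1-\frac{\kappa}{4+\kappa}\right)^n \le \left(1-\frac{\kappa}{4+\kappa}\right)^{|x|^2/(2\log|x|)}$, so pulling this factor out of the sum leaves $\left(1-\frac{\kappa}{4+\kappa}\right)^{|x|^2/(2\log|x|)} \sum_{n} \BP(S_n = x)$. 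The remaining sum $\sum_{n=\lfloor |x|^2/(2\log|x|)\rfloor+1}^{|x|^2} \BP(S_n = x)$ is bounded by $\sum_{n=0}^{\infty}\BP(S_n = x)$ restricted to the parity of $|x|$, which by the local CLT for the two-dimensional simple random walk is $O(\log|x|)$ — indeed $\BP(S_{2m}=x) \sim \frac{1}{\pi m}$ for $m \gtrsim |x|^2$, and summing $\frac{1}{\pi m}$ from $m \approx |x|^2/\log|x|$ up to $m \approx |x|^2$ gives $\frac{1}{\pi}\log\log|x| + O(1)$, certainly at most $\log|x|$ for large $|x|$. But this would only give a bound with an extra $\log|x|$ factor, so to reach exactly the constant $2$ I would instead be slightly more careful: bound $\sum_n \BP(S_n=x)$ by $G^{o,x}$ for the \emph{unkilled} walk restricted appropriately, or more simply observe that $\left(1-\frac{\kappa}{4+\kappa}\right)^{|x|^2/(2\log|x|)}$ can be written as $e^{-\Theta(\kappa |x|^2/\log|x|)}$ and the prefactor $\log|x|$ is easily absorbed by replacing the exponent $|x|^2/(2\log|x|)$ in the statement with the truncation point and noting $\log|x| \cdot e^{-c t} \le 2 e^{-ct/2}$-type manipulations — in the regime of interest the $\log|x|$ is dominated. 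Concretely I expect the clean route is: the sum of $\BP(S_n=x)$ over $n$ in the given range is at most $1$ plus the tail, and multiplying a quantity that is $\le 1$ by the per-step factor evaluated at the lower endpoint yields the stated bound once the numerical constant is checked.

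The main obstacle is the \emph{off-diagonal / small-$n$ regime in \eqref{eqn:sumWoxest2}}: getting a decay estimate on $\BP(S_n = x)$ that is uniform over $|x| \le n \le |x|^2/(2\log|x|)$, strong enough that summing over all such $n$ still produces something $O(|x|^{-1})$. The delicate point is that the number of terms is of order $|x|^2/\log|x|$, so the per-term Gaussian bound $e^{-c|x|^2/n}$ must beat this polynomial factor; this forces one to be precise about the constant $c$ in the exponent (essentially $c$ close to $1/2$ from the Gaussian density, which is exactly why the truncation is placed at $|x|^2/(2\log|x|)$ and not, say, $|x|^2/\log|x|$) and to handle the boundary $n \approx |x|$ separately, where the walk can only just reach $x$ and the relevant bound is more combinatorial. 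I would organize the argument by splitting the range into $n \le |x|^{3/2}$ (say), where crude bounds like $\BP(S_n=x)\le \BP(\max_{k\le n}|S_k| \ge |x|)$ and a reflection/union bound suffice, and $|x|^{3/2} \le n \le |x|^2/(2\log|x|)$, where a sharper Gaussian estimate (e.g.\ from Lawler--Limic type local CLT bounds with Gaussian tails) is needed; in both subranges the sum is geometrically dominated by its right endpoint and gives at most $|x|^{-1}$ up to constants, which combine to the stated $3|x|^{-1}$.
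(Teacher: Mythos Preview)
For \eqref{eqn:sumWoxest2} your approach is correct but more involved than necessary. The paper, like you, bounds $\left(\frac{1}{4+\kappa}\right)^n W_n^{o,x} \le \BP(S_n=x)$ and then invokes a local CLT estimate (Lemma \ref{lemma:2dLCLT}) of the form $\BP(S_n=x) \le \frac{2}{n}e^{-|x|^2/(2n)} + \frac{C}{n|x|^2}$. But rather than splitting into subranges, it simply notes that $\frac{2}{n}e^{-|x|^2/(2n)}$ is \emph{increasing} in $n$ throughout the summation range (since $n < |x|^2/2$ there), so every Gaussian term is bounded by its value at $n = |x|^2/(2\log|x|)$, namely $\frac{4\log|x|}{|x|^2}\,e^{-\log|x|} = \frac{4\log|x|}{|x|^3}$. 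Multiplying by the number of terms, at most $|x|^2/(2\log|x|)$, gives $2|x|^{-1}$; the error term $\frac{C}{n|x|^2}\le \frac{C}{|x|^3}$ contributes $o(|x|^{-1})$. Your proposed split at $|x|^{3/2}$ with reflection bounds would also work but is not needed.

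For \eqref{eqn:sumWoxest3} there is a genuine gap. You correctly factor out $\left(1-\frac{\kappa}{4+\kappa}\right)^{|x|^2/(2\log|x|)}$ and reduce to bounding $\sum_n \BP(S_n=x)$ over the given range, but you do not actually show this sum is at most $2$. Your heuristic $\BP(S_{2m}=x) \sim 1/(\pi m)$ drops the Gaussian factor and yields $O(\log\log|x|)$, which diverges; the proposed fixes (absorbing a logarithm into the exponent, bounding by an unkilled $G^{o,x}$, or asserting the sum is ``at most $1$ plus the tail'') either alter the statement of the lemma, invoke an infinite quantity, or are simply unjustified. The missing observation is that the same local CLT bound $\frac{2}{n}e^{-|x|^2/(2n)}$, viewed as a function of $n$, is \emph{uniformly} bounded by its maximum $\frac{4}{e|x|^2}$ (attained at $n = |x|^2/2$, via $y e^{-cy} \le 1/(ce)$). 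Since the range contains at most $|x|^2$ values of $n$, the sum of the Gaussian parts is at most $4/e < 2$, and the error terms $\frac{C}{n|x|^2}$ contribute $O(\log|x|/|x|^2) = o(1)$. This is exactly how the paper obtains the constant $2$.
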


\noindent

Using this result, we can now prove the following proposition.
\begin{proposition} \label{prop:Goxest1}
Assume that $e^9\leq \kappa^{-1}\leq |A|$. 
For large enough $|A|,$ we then have that 
\[
G^{o,x} \leq |A|^{-\frac{1}{2\mu(\Gamma_o)}}
\]
for every $x\in \BZ^2$ such that 
\[
|x|\geq |A|^{\frac{1}{\mu(\Gamma_o)}}\kappa^{-1/2}.
\]
\end{proposition}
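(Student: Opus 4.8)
The plan is to bound $G^{o,x}=\sum_{n\ge|x|}\left(\tfrac{1}{4+\kappa}\right)^{n}W_n^{o,x}$ by cutting the sum at the two thresholds appearing in Lemma \ref{lemma:sumWoxest}, namely $\lfloor|x|^2/(2\log|x|)\rfloor$ and $|x|^2$, and estimating the three resulting pieces separately. First I would reduce to the case of even $|x|$: exactly as in the proof of Proposition \ref{prop:Goxest2}, the identity $W_n^{o,x}=\sum_{y\sim x}W_{n-1}^{o,y}$ gives $G^{o,x}=\tfrac{1}{4+\kappa}\sum_{y\sim x}G^{o,y}\le\tfrac{4}{4+\kappa}\max_{y\sim x}G^{o,y}$, where each neighbour $y$ lies at even distance $|y|\ge|x|-1$ from $o$. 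Since $\tfrac{4}{4+\kappa}<1$ and, for $|A|$ large, $|x|-1\ge\tfrac12|A|^{1/\mu(\Gamma_o)}\kappa^{-1/2}$ whenever $|x|\ge|A|^{1/\mu(\Gamma_o)}\kappa^{-1/2}$, it suffices to establish the estimate for even points, with the threshold in the hypothesis weakened by a harmless factor $\tfrac12$.

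So assume $|x|$ is even and $|x|\ge\tfrac12|A|^{1/\mu(\Gamma_o)}\kappa^{-1/2}$. The range $|x|\le n\le\lfloor|x|^2/(2\log|x|)\rfloor$ contributes, by \eqref{eqn:sumWoxest2}, at most $3|x|^{-1}$; since $\kappa\le e^{-9}$ this is at most $6\kappa^{1/2}|A|^{-1/\mu(\Gamma_o)}\le 6e^{-9/2}\,|A|^{-1/(2\mu(\Gamma_o))}$, and this is the dominant term. The range $\lfloor|x|^2/(2\log|x|)\rfloor<n\le|x|^2$ contributes, by \eqref{eqn:sumWoxest3} and $1-u\le e^{-u}$ together with $\tfrac{\kappa}{4+\kappa}\ge\tfrac{\kappa}{5}$, at most $2\exp\!\bigl(-\tfrac{\kappa|x|^2}{10\log|x|}\bigr)$. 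Finally, on the range $n>|x|^2$ I would apply Lemma \ref{lemma:WxWo} to replace $W_n^{o,x}$ by $W_n^{o,o}$ and then use \eqref{eqn:Wootailest_alt} with $N=|x|^2/2$; the requirement $N\kappa\ge1/2$ holds since $|x|^2\kappa\ge\tfrac14|A|^{2/\mu(\Gamma_o)}\ge1$ for large $|A|$, and the contribution is at most $4e^{-|x|^2\kappa/8}\le 4\exp\!\bigl(-\tfrac{1}{32}|A|^{2/\mu(\Gamma_o)}\bigr)$.

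It then remains to see that the last two pieces are negligible against $|A|^{-1/(2\mu(\Gamma_o))}$. By \eqref{eqn:Goolowest} and $\kappa^{-1}\ge e^{9}$ one has $\mu(\Gamma_o)=\log G^{o,o}>1$, while $\kappa^{-1}\le|A|$ gives, via \eqref{eqn:muGammaoest}, $\mu(\Gamma_o)\le\log\!\bigl(\tfrac{\log|A|}{\pi}+2\bigr)$; in particular $|A|^{1/\mu(\Gamma_o)}\to\infty$, which both makes the bound non-trivial and guarantees, since $|x|\ge\tfrac12|A|^{1/\mu(\Gamma_o)}\kappa^{-1/2}\to\infty$, that Lemma \ref{lemma:sumWoxest} applies. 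Using that $t\mapsto\kappa t^2/\log t$ is increasing, for $|x|$ above the threshold $t_0:=\tfrac12|A|^{1/\mu(\Gamma_o)}\kappa^{-1/2}$ one has $\tfrac{\kappa|x|^2}{\log|x|}\ge\tfrac{\kappa t_0^2}{\log t_0}$, where $\kappa t_0^2=\tfrac14|A|^{2/\mu(\Gamma_o)}$ and $\log t_0\le\tfrac{\log|A|}{\mu(\Gamma_o)}+\tfrac12\log\kappa^{-1}\le\tfrac32\log|A|$; hence $\tfrac{\kappa|x|^2}{\log|x|}\ge\tfrac{|A|^{2/\mu(\Gamma_o)}}{6\log|A|}$, which dwarfs $\tfrac{\log|A|}{\mu(\Gamma_o)}=\log\bigl(|A|^{1/\mu(\Gamma_o)}\bigr)$, so both exponential pieces are $o\bigl(|A|^{-1/(2\mu(\Gamma_o))}\bigr)$. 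Adding the three pieces yields $G^{o,x}\le|A|^{-1/(2\mu(\Gamma_o))}$ for $|A|$ large, and the odd case follows from $G^{o,x}\le\tfrac{4}{4+\kappa}\max_{y\sim x}G^{o,y}$. The one genuinely delicate point — the rest being bookkeeping — is balancing the short-range term $3|x|^{-1}$ so that it is at once large enough to dominate the two exponential tails and small enough to sit below $|A|^{-1/(2\mu(\Gamma_o))}$; this is precisely where the hypothesis $\kappa^{-1}\le|A|$ enters, keeping $\mu(\Gamma_o)$ — and hence $|A|^{1/\mu(\Gamma_o)}$ and the exponent $1/(2\mu(\Gamma_o))$ — under control.
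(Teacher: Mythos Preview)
Your proposal is correct and follows essentially the same three-piece decomposition as the paper's proof, invoking the same lemmas (\ref{lemma:sumWoxest} for the first two pieces, Lemma~\ref{lemma:WxWo} plus \eqref{eqn:Wootailest_alt} for the tail). The only organizational difference is that you reduce to the even case at the outset via the neighbour-averaging identity $G^{o,x}\le\tfrac{4}{4+\kappa}\max_{y\sim x}G^{o,y}$, whereas the paper applies Lemma~\ref{lemma:sumWoxest} directly for all parities and treats odd $|x|$ separately only in the tail piece; both routes lead to the same conclusion with the same ingredients.
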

\noindent
\begin{proof}
Using \eqref{eqn:sumWoxest2} we have that 
\begin{eqnarray}\label{eqn:Goxest1}
\lefteqn{G^{o,x}
=\sum_{n=|x|}^{\infty}\left(\frac{1}{4+\kappa}\right)^{n} W_n^{o,x}}\\
& & \leq 3|x|^{-1}
+\sum_{n=\left\lfloor \frac{|x|^2}{2\log |x|}\right\rfloor+1}^{|x|^2}
\left(\frac{1}{4+\kappa}\right)^{n} W_n^{o,x}
+\sum_{n=|x|^2+1}^\infty 
\left(\frac{1}{4+\kappa}\right)^{n} W_n^{o,x}. \nonumber 
\end{eqnarray}
Furthermore, by \eqref{eqn:muGammaoest} and 
our assumptions on $\kappa^{-1},$ 
it is easy to verify that $\mu(\Gamma_o) \leq \log \log |A|.$
Therefore, 
\begin{equation} \label{eqn:AGammaAtoinfty}
|A|^{\frac{1}{\mu(\Gamma_o)}}\geq 
|A|^{\frac{1}{\log \log |A|}}
\to \infty \qquad \text{ as } \vert A \vert \to \infty,
\end{equation}
and so we note that, by using the lower bound on $|x|,$ and that 
$\kappa^{1/2}\leq e^{9/2}$ by assumption, 
\begin{equation}\label{eqn:Goxest2}
3|x|^{-1}\leq 3|A|^{-\frac{1}{\mu(\Gamma_o)}}\kappa^{1/2}
\leq |A|^{-\frac{1}{\mu(\Gamma_o)}},
\end{equation}
for $|A|$ large enough.
Next we observe that by \eqref{eqn:sumWoxest3}
\[
\sum_{n=\left\lfloor \frac{|x|^2}{2\log |x|}\right\rfloor+1}^{|x|^2}
\left(\frac{1}{4+\kappa}\right)^{n} W_n^{o,x}
\leq 2\left(1-\frac{\kappa}{4+\kappa}\right)^{\frac{|x|^2}{2 \log |x|}}
\leq 2\left(1-\frac{\kappa}{4+\kappa}\right)
^{\frac{|A|^{\frac{2}{\mu(\Gamma_o)}}\kappa^{-1}}
{2 \log \left(|A|^{\frac{1}{\mu(\Gamma_o)}}\kappa^{-1/2}\right)}}
\]
by again using the lower bound on $|x|$ in the last inequality, together with
the fact that the function $x^2/(2\log x)$ is increasing for $x$ large.
Furthermore, by using that $\log(1-x)\leq -x$ for any $0<x<1$, we see
that 
\begin{eqnarray} \label{eqn:harke1}
\lefteqn{\exp\left(
\frac{|A|^{\frac{2}{\mu(\Gamma_o)}}\kappa^{-1}}
{2 \log \left(|A|^{\frac{1}{\mu(\Gamma_o)}}\kappa^{-1/2}\right)}
\log \left(1-\frac{\kappa}{4+\kappa}\right) \right)} \\
& & \leq \exp\left(
\frac{|A|^{\frac{2}{\mu(\Gamma_o)}}\kappa^{-1}}
{2 \log \left(|A|^{\frac{1}{\mu(\Gamma_o)}}\kappa^{-1/2}\right)}
\left(-\frac{\kappa}{5}\right) \right)
=\exp\left(-\frac{|A|^{\frac{2}{\mu(\Gamma_o)}}}
{10 \log \left(|A|^{\frac{1}{\mu(\Gamma_o)}}\kappa^{-1/2}\right)}
\right).  \nonumber
\end{eqnarray}
Using \eqref{eqn:muGammaolowest} we observe that 
$\mu(\Gamma_o)\geq 1$ and so
$|A|^{\frac{1}{\mu(\Gamma_o)}}\kappa^{-1/2}\leq |A|^{3/2}$
by our assumption on $\kappa.$ By also using 
\eqref{eqn:AGammaAtoinfty} we can therefore conclude 
from \eqref{eqn:harke1} that, for $\vert A \vert$ large enough, 
\begin{eqnarray*}
\lefteqn{\exp\left(
\frac{|A|^{\frac{2}{\mu(\Gamma_o)}}\kappa^{-1}}
{2 \log \left(|A|^{\frac{1}{\mu(\Gamma_o)}}\kappa^{-1/2}\right)}
\log \left(1-\frac{\kappa}{4+\kappa}\right) \right)}\\
& & \leq \exp\left(-\frac{|A|^{\frac{2}{\log \log |A|}}}
{10 \log |A|^{3/2}}\right)
\leq \exp\left(-2\log |A|\right)=\frac{1}{|A|^2},
\end{eqnarray*}
where the last inequality follows from elementary considerations
and is not optimal.
Therefore,
\begin{equation} \label{eqn:Goxest4}
\sum_{n=\left\lfloor \frac{|x|^2}{2\log |x|}\right\rfloor+1}^{|x|^2}
\left(\frac{1}{4+\kappa}\right)^{n} W_n^{o,x}
\leq \frac{2}{|A|^2}\leq \frac{1}{|A|},
\end{equation}
for $|A|$ large enough.

Next, assume that $|x|$ is even and note that 
\begin{eqnarray} \label{eqn:Goxest3}
\lefteqn{\sum_{n=|x|^2+1}^\infty 
\left(\frac{1}{4+\kappa}\right)^{n} W_n^{o,x} 
=\sum_{n=|x|^2+2}^\infty 
\left(\frac{1}{4+\kappa}\right)^{n} W_n^{o,x} }\\
& & = \sum_{n=|x|^2/2+1}^\infty 
\left(\frac{1}{4+\kappa}\right)^{2n} W_{2n}^{o,x}
\leq \sum_{n=|x|^2/2+1}^\infty 
\left(\frac{1}{4+\kappa}\right)^{2n} W_{2n}^{o,o}, \nonumber
\end{eqnarray}
where the first equality uses that since $|x|^2$ is even,
$W_{|x|^2+1}^{o,x}=0,$
and where the inequality follows from Lemma \ref{lemma:WxWo}.
Next, we want to apply \eqref{eqn:Wootailest_alt} with $N=|x|^2/2$
to the right hand side of \eqref{eqn:Goxest3}. For this we need to verify
that $N \kappa \geq 1/2,$ and indeed, by our assumption on $|x|,$
we here have that 
$N \kappa=|x|^2 \kappa/2 
\geq (\kappa^{-1/2}|A|^{\frac{1}{\mu(\Gamma_o)}})^2 \kappa/2
\geq |A|^{\frac{2}{\mu(\Gamma_o)}}/2\geq 1/2.$ Applying \eqref{eqn:Wootailest_alt}
we can therefore conclude that (by again using our assumption on $|x|$),
\begin{equation}\label{eqn:Goxest5}
\sum_{n=|x|^2/2+1}^\infty 
\left(\frac{1}{4+\kappa}\right)^{2n} W_{2n}^{o,o}
\leq 4e^{-|x|^2\kappa/8}
\leq 4e^{-|A|^{\frac{2}{8\mu(\Gamma_o)}}}
\leq |A|^{-2},
\end{equation}
for $|A|$ large enough.
Inserting \eqref{eqn:Goxest2}, \eqref{eqn:Goxest4} and \eqref{eqn:Goxest5}
into \eqref{eqn:Goxest1} we get that
\begin{equation} \label{eqn:Goxest6}
G^{o,x}\leq |A|^{-\frac{1}{\mu(\Gamma_o)}}
+|A|^{-1}+|A|^{-2}
\leq |A|^{-\frac{1}{2\mu(\Gamma_o)}},
\end{equation}
for $|A|$ large enough, since $\mu(\Gamma_o)\geq 1$ as before.

Assume now that $|x|$ is odd and observe that $|x|^2\geq |y|^2/2$
whenever $y\sim x$ and $\vert x \vert \geq 3$. We then 
sum over all $y \sim x$ and observe that
\begin{eqnarray*} 
\lefteqn{\sum_{n=|x|^2+1}^\infty 
\left(\frac{1}{4+\kappa}\right)^{n} W_n^{o,x} 
=\sum_{n=|x|^2+1}^\infty 
\left(\frac{1}{4+\kappa}\right)^{n} \sum_{y \sim x} W_{n-1}^{o,y}}\\
& & \leq \sum_{n=\frac{|y|^2}{2}+1}^\infty 
\left(\frac{1}{4+\kappa}\right)^{n} \sum_{y \sim x} W_{n-1}^{o,y}
=\sum_{n=\frac{|y|^2}{2}}^\infty 
\left(\frac{1}{4+\kappa}\right)^{n+1} \sum_{y \sim x} W_{n}^{o,y}
\nonumber \\
& & = \sum_{n=\frac{|y|^2}{4}}^\infty 
\left(\frac{1}{4+\kappa}\right)^{2n+1} \sum_{y \sim x} W_{2n}^{o,y}
\leq \sum_{n=\frac{|y|^2}{4}}^\infty 
\left(\frac{1}{4+\kappa}\right)^{2n+1} \sum_{y \sim x} W_{2n}^{o,o}
\nonumber \\
& & \leq \sum_{n=\frac{|y|^2}{4}}^\infty 
\left(\frac{1}{4+\kappa}\right)^{2n} W_{2n}^{o,o}
\leq 4 e^{-|y|^2 \kappa/16} \leq 4 e^{-|x|^2 \kappa/32} 
\leq 4e^{-|A|^{\frac{2}{32 \mu(\Gamma_o)}}}
\leq |A|^{-2},
\nonumber
\end{eqnarray*}
where we used \eqref{eqn:Wootailest_alt} in the fourth inequality and
that $|y|^2\geq |x|^2/2$ in the fifth (which holds for $y \sim x$ and 
$|x|\geq 4$). We see that \eqref{eqn:Goxest6}
holds also for this case, which concludes the proof.
\end{proof}

\section{Probability estimates} \label{sec:probabilities}
Recall our main goal of obtaining estimates on the cover times of a sequence
of growing sets. In order to get to that point, we need to consider a 
generic set $A,$ which one can typically think of as being very large.
Consider then
\begin{equation} \label{eqn:u*def}
u^*=\frac{\log |A|}{\mu(\Gamma_o)},
\end{equation}
and note that $u^*=u^*(|A|,\kappa).$ The relevance of $u^*$ can be seen by 
first observing that by \eqref{eqn:mulogG}
\begin{equation}\label{eqn:u*Goo}
\left(\frac{1}{G^{o,o}}\right)^{u^*}
=\exp(-u^*\mu(\Gamma_o))=|A|^{-1},
\end{equation}
and then that it follows from \eqref{eqn:probGreen}
that the expected number of uncovered vertices at time 
$u^*$ is 1. Informally, with enough independence, this is the 
intuition for why the cover time of the generic set $A$ should be 
around $u^*$ as mentioned in the discussion after the statement of
Theorem \ref{thm:main} in the Introduction. Of course, what 
constitutes enough independence is hard to quantify, and is at
the heart of the mentioned discussion as well as that of 
Section \ref{sec:examples}. 

Before we start presenting the results of this section, recall the discussion at 
the end of the introduction. In short, we want to consider the covered set at time 
$(1-\epsilon)u^*=(1-\epsilon)\frac{\log |A|}{\mu(\Gamma_o)},$ which by the 
intuition above should be ``just before coverage'' when $\epsilon>0$ is 
very small. We want to show that the 
set yet to be covered at that time consists of relatively few 
well-separated points. This result is obtained in Section \ref{sec:2ndmom}
(in particular Proposition \ref{prop:HAeps}), using a second moment argument.
In order to perform this, we need to understand the probability that two points 
$o,x$ both belong to the uncovered set. This probability will of course be 
heavily dependent on the separation of $o$ and $x,$ and the main purpose of 
this section is to understand this dependence in detail.

Our first result is the following.
\begin{proposition} \label{prop:probest}
Let $\kappa^{-1}>e^{30}$ and $\epsilon\in (0,1).$
Then, for every $x\in \BZ^2$ we have that 
\begin{equation} \label{eqn:proballdist}
\BP(\{o,x\} \cap \CC_{(1-\epsilon)u^*}=\emptyset)
\leq |A|^{-(1-\epsilon)}
\left(\frac{9}{8}\right)^{-(1-\epsilon)u^*}.
\end{equation}
Furthermore, for any $x\in \BZ^2$ such that $4\leq |x|\leq 2\kappa^{-1}$
we have that 
\begin{equation}\label{eqn:probmeddist}
\BP(\{o,x\} \cap \CC_{(1-\epsilon)u^*}=\emptyset)
\leq |A|^{-(1-\epsilon)}
\left(\frac{\log |x|}{\pi}\right)^{-(1-\epsilon)u^*}.
\end{equation}
If instead $|x|\geq 2\kappa^{-1},$ we have that
\begin{equation}\label{eqn:problargedist}
\BP(\{o,x\} \cap \CC_{(1-\epsilon)u^*}=\emptyset)
\leq |A|^{-(1-\epsilon)}
\left(\frac{\log \kappa^{-1}}{2\pi}\right)^{-(1-\epsilon)u^*}.
\end{equation}

\end{proposition}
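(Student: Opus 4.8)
The plan is to reduce everything to the exact identity \eqref{eqn:probxyuncov}. Taking $y=o$ and $u=(1-\epsilon)u^*$ there, and factoring the denominator, gives
\[
\BP(\{o,x\}\cap\CC_{(1-\epsilon)u^*}=\emptyset)
=\left((G^{o,o})^2-(G^{o,x})^2\right)^{-(1-\epsilon)u^*}
=\left((G^{o,o}-G^{o,x})(G^{o,o}+G^{o,x})\right)^{-(1-\epsilon)u^*}.
\]
So it suffices to prove, in each of the three regimes, a lower bound of the shape $(G^{o,o})^2-(G^{o,x})^2\geq c_x\,G^{o,o}$ with $c_x=9/8$, $(\log|x|)/\pi$ and $(\log\kappa^{-1})/(2\pi)$ respectively: raising to the power $-(1-\epsilon)u^*$ and using $(G^{o,o})^{-u^*}=|A|^{-1}$ from \eqref{eqn:u*Goo} then turns this into $\BP(\cdot)\leq (c_x G^{o,o})^{-(1-\epsilon)u^*}=|A|^{-(1-\epsilon)}\,c_x^{-(1-\epsilon)u^*}$, which is exactly the claim. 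The standing facts I will use are the trivial bound $G^{o,o}+G^{o,x}\geq G^{o,o}$ (since $G^{o,x}\geq 0$) and that $\kappa^{-1}>e^{30}$ forces $G^{o,o}\geq 10$, and in fact $G^{o,o}\geq(\log\kappa^{-1})/\pi$, via \eqref{eqn:Goolowest} (here one uses $1-\tfrac{4}{3\pi}>0$).

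For the medium and large regimes the required lower bound is immediate. If $4\leq|x|\leq2\kappa^{-1}$, Proposition \ref{prop:Greendiffest1} gives $G^{o,o}-G^{o,x}\geq(\log|x|)/\pi$, so the product is at least $\tfrac{\log|x|}{\pi}G^{o,o}$ and \eqref{eqn:probmeddist} follows. If $|x|\geq2\kappa^{-1}$, Proposition \ref{prop:Goxest2} gives $G^{o,x}\leq G^{o,o}/2$, hence $G^{o,o}-G^{o,x}\geq G^{o,o}/2$ and the product is at least $\tfrac12(G^{o,o})^2\geq\tfrac12\cdot\tfrac{\log\kappa^{-1}}{\pi}\cdot G^{o,o}$, giving \eqref{eqn:problargedist}.

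The one regime needing a little care is \eqref{eqn:proballdist}. For a generic $x\neq o$, \eqref{eqn:Greengenlest} only yields $G^{o,o}-G^{o,x}\geq 3/4$, and combining this with $G^{o,o}+G^{o,x}\geq G^{o,o}$ gives only $\tfrac34 G^{o,o}$, short of the required $\tfrac98 G^{o,o}$. I would close the gap by a two-case split on the size of $G^{o,x}$: if $G^{o,x}\geq G^{o,o}/2$ then $G^{o,o}+G^{o,x}\geq\tfrac32 G^{o,o}$, so the product is at least $\tfrac34\cdot\tfrac32 G^{o,o}=\tfrac98 G^{o,o}$; if instead $G^{o,x}<G^{o,o}/2$ then $G^{o,o}-G^{o,x}>G^{o,o}/2$, so the product exceeds $\tfrac12(G^{o,o})^2\geq 5G^{o,o}\geq\tfrac98 G^{o,o}$, using $G^{o,o}\geq 10$. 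Either way $(G^{o,o})^2-(G^{o,x})^2\geq\tfrac98 G^{o,o}$, which finishes \eqref{eqn:proballdist}. (For $x=o$ there is nothing to do: the left side equals $|A|^{-(1-\epsilon)}$ by \eqref{eqn:probGreen}, so the statement is to be read for $x\neq o$.)

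I expect the only real obstacle to be precisely this bookkeeping around the constant $9/8$: one cannot afford the blanket bound $G^{o,o}+G^{o,x}\geq G^{o,o}$ in the range where $G^{o,x}$ is comparable to $G^{o,o}$, and must instead play the two factors $G^{o,o}-G^{o,x}$ and $G^{o,o}+G^{o,x}$ against each other according to the size of $G^{o,x}$. Everything else is a direct substitution of the Green's-function estimates from Section \ref{sec:Green} into \eqref{eqn:probxyuncov}.
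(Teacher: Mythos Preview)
Your proof is correct and matches the paper's almost line for line: the same factoring via \eqref{eqn:probxyuncov}, the same Green's-function inputs from Propositions \ref{prop:Greendiffest1} and \ref{prop:Goxest2}, and the same two-case split on $G^{o,x}\gtrless G^{o,o}/2$ to extract the constant $9/8$. The paper uses the slightly weaker $G^{o,o}\geq 9/4$ in the second case (from $\log\kappa^{-1}/\pi\geq 30/\pi$) where you use $G^{o,o}\geq 10$, but this is cosmetic.
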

\begin{proof}
We start with the first statement.
Use \eqref{eqn:probxyuncov} to see that
\begin{eqnarray} \label{eqn:probtoGreen1}
\lefteqn{\BP(\{o,x\} \cap \CC_{(1-\epsilon)u^*}=\emptyset)}\\
& & =\left(\frac{1}{(G^{o,o})^2-(G^{o,x})^2}\right)^{(1-\epsilon)u^*}
=\left((G^{o,o}+G^{o,x})(G^{o,o}-G^{o,x})\right)
^{-(1-\epsilon)u^*}.\nonumber
\end{eqnarray} 
Then, we have from \eqref{eqn:Greengenlest}
that $G^{o,o}-G^{o,x}\geq \frac{3}{4}$. There are now
two cases. Either $G^{o,x}\geq \frac{G^{o,o}}{2}$, in which case 
$(G^{o,o}+G^{o,x})(G^{o,o}-G^{o,x})\geq \frac{9}{8}G^{o,o}$
and so 
\[
\BP(\{o,x\} \cap \CC_{(1-\epsilon)u^*}=\emptyset)
\leq \left(G^{o,o}\frac{9}{8}\right)^{-(1-\epsilon)u^*}
=|A|^{-(1-\epsilon)} \left(\frac{9}{8}\right)^{-(1-\epsilon)u^*},
\]
by \eqref{eqn:u*Goo}, or $G^{o,x}< \frac{G^{o,o}}{2},$
in which case
$(G^{o,o}+G^{o,x})(G^{o,o}-G^{o,x})\geq G^{o,o}\frac{G^{o,o}}{2}$.
Furthermore, by \eqref{eqn:Goolowest} we have that 
$G^{o,o}\geq \frac{\log \kappa^{-1}}{\pi}\geq \frac{30}{\pi}\geq \frac{9}{4},$
by our assumption on $\kappa,$ which proves \eqref{eqn:proballdist}.

For our second statement, we note that it follows 
from \eqref{eqn:probtoGreen1} that
\begin{eqnarray} \label{eqn:probtoGreen}
\lefteqn{\BP(\{o,x\} \cap \CC_{(1-\epsilon)u^*}=\emptyset)}\\
& & \leq \left(G^{o,o}\right)^{-(1-\epsilon)u^*}
\left(G^{o,o}-G^{o,x}\right)^{-(1-\epsilon)u^*}
= |A|^{-(1-\epsilon)}\left(G^{o,o}-G^{o,x}\right)^{-(1-\epsilon)u^*},
\nonumber
\end{eqnarray}
so that by \eqref{eqn:Greenmedest}, we conclude that 
\[
\BP(\{o,x\} \cap \CC_{(1-\epsilon)u^*}=\emptyset)
\leq |A|^{-(1-\epsilon)}
\left(\frac{\log |x|}{\pi}\right)^{-(1-\epsilon)u^*},
\]
which proves \eqref{eqn:probmeddist}.

For the third statement, observe that by Proposition \ref{prop:Goxest2}
we have that $G^{o,o}-G^{o,x}\geq \frac{G^{o,o}}{2}$. Then we can use
\eqref{eqn:probtoGreen} to see that 
\begin{eqnarray*}
\lefteqn{\BP(\{o,x\} \cap \CC_{(1-\epsilon)u^*}=\emptyset)
\leq 
|A|^{-(1-\epsilon)}\left(G^{o,o}-G^{o,x}\right)^{-(1-\epsilon)u^*}}\\
& & \leq 
|A|^{-(1-\epsilon)}\left(\frac{G^{o,o}}{2}\right)^{-(1-\epsilon)u^*}
\leq |A|^{-(1-\epsilon)}
\left(\frac{\log \kappa^{-1}}{2\pi}\right)^{-(1-\epsilon)u^*}
\nonumber
\end{eqnarray*}
where we used \eqref{eqn:Goolowest} in the last inequality.
\end{proof}

Proposition \ref{prop:probest} together with Proposition 
\ref{prop:Goxest1} will suffice when proving our 
desired second moment estimates. 
However, we shall also face the issue of covering 
a relatively small number of well separated (i.e. close to the 
correlation length $\kappa^{-1/2}$) vertices.  
What we need is stated in Proposition \ref{prop:smalldistantK} below,
but in order to prove this we will first establish a preliminary
result, namely Lemma \ref{lemma:E1E2}.

For any $K\subset \BZ^2$, let 
\[
\Gamma_{K}:=\{\gamma:\gamma\cap K \neq \emptyset\}
=\bigcup_{x \in K} \Gamma_x
\]
and 
\[
\Gamma_{K_1,K_2}:=\Gamma_{K_1}\cap \Gamma_{K_2}.
\]
Recall that $\omega_u$ denotes the loop soup with intensity $u$ so that 
$\omega_u(\Gamma_{K_1,K_2})$ is the number of loops $\gamma \in\omega_u$ 
such that $\gamma\cap K_1\neq \emptyset$ and $\gamma \cap K_2 \neq \emptyset.$

\begin{lemma} \label{lemma:E1E2}
Let $K_1,K_2 \subset \BZ^2$ be disjoint, and let $E_1,E_2$ be events 
that are determined by $\omega_u$ restricted to the sets $K_1$ and $K_2$
respectively. We have that
\begin{equation} \label{eqn:E1E2diffest}
|\BP(E_1\cap E_2)
-\BP(E_1)\BP(E_2)|\leq 4\BP(\omega_u(\Gamma_{K_1, K_2})\neq 0).
\end{equation}
\end{lemma}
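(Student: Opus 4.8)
The plan is to exploit the Poissonian structure of $\omega_u$ directly. Write $\omega_u = \omega_u^{(1)} \cup \omega_u^{(2)} \cup \omega_u^{(0)}$, where $\omega_u^{(1)}$ consists of the loops of $\omega_u$ meeting $K_1$ but not $K_2$, $\omega_u^{(2)}$ consists of the loops meeting $K_2$ but not $K_1$, and $\omega_u^{(0)}$ consists of all remaining loops (including those that meet both $K_1$ and $K_2$, and those that meet neither). By the restriction/thinning property of Poisson point processes, $\omega_u^{(1)}$, $\omega_u^{(2)}$, $\omega_u^{(0)}$ are independent. The key observation is that the event $\{\omega_u(\Gamma_{K_1,K_2}) = 0\}$ is precisely the event that no loop of $\omega_u$ meets both $K_1$ and $K_2$; on this event, every loop meeting $K_1$ lies in $\omega_u^{(1)}$ and every loop meeting $K_2$ lies in $\omega_u^{(2)}$, so the configuration of $\omega_u$ restricted to $K_1$ is a function of $\omega_u^{(1)}$ alone and likewise for $K_2$ and $\omega_u^{(2)}$.

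First I would introduce the ``good'' event $B := \{\omega_u(\Gamma_{K_1,K_2}) = 0\}$ and define modified events $E_i'$ that depend on $\omega_u^{(i)}$ only: namely, let $E_1'$ be the event that the configuration obtained from $\omega_u^{(1)}$ (restricted to $K_1$) lies in the set defining $E_1$, and similarly $E_2'$ from $\omega_u^{(2)}$. By the independence of $\omega_u^{(1)}$ and $\omega_u^{(2)}$ we have $\BP(E_1' \cap E_2') = \BP(E_1')\BP(E_2')$ exactly. On the event $B$, the restriction of $\omega_u$ to $K_1$ coincides with that of $\omega_u^{(1)}$ and the restriction to $K_2$ coincides with that of $\omega_u^{(2)}$, so $E_1 \cap B = E_1' \cap B$ and $E_2 \cap B = E_2' \cap B$. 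Hence
\[
|\BP(E_1 \cap E_2) - \BP(E_1' \cap E_2')| \leq |\BP(E_1 \cap E_2) - \BP(E_1 \cap E_2 \cap B)| + |\BP(E_1' \cap E_2') - \BP(E_1' \cap E_2' \cap B)| \leq 2\BP(B^c),
\]
using $E_1 \cap E_2 \cap B = E_1' \cap E_2' \cap B$ in the middle. Similarly $|\BP(E_i) - \BP(E_i')| \leq \BP(B^c)$ for $i=1,2$, since $E_i \cap B = E_i' \cap B$, which gives $|\BP(E_1)\BP(E_2) - \BP(E_1')\BP(E_2')| \leq 2\BP(B^c)$ by a telescoping estimate (each factor is a probability, hence bounded by $1$). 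Combining the three displays via the triangle inequality yields $|\BP(E_1 \cap E_2) - \BP(E_1)\BP(E_2)| \leq 4\BP(B^c) = 4\BP(\omega_u(\Gamma_{K_1,K_2}) \neq 0)$, as desired.

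The only genuinely delicate point is making precise the claim that on $B$ the restriction of $\omega_u$ to $K_i$ is a measurable function of $\omega_u^{(i)}$, and that therefore $E_i \cap B = E_i' \cap B$. This requires specifying that ``$E_i$ is determined by $\omega_u$ restricted to $K_i$'' means $E_i = \{f_i(\omega_u|_{K_i}) = 1\}$ for some measurable $f_i$, where $\omega_u|_{K_i}$ denotes the point configuration of loops meeting $K_i$ (or, equivalently, their traces intersected with $K_i$); once that is fixed, the identity is immediate from the set-theoretic decomposition of loops by which of $K_1, K_2$ they meet. I do not expect any real obstacle beyond this bookkeeping; the argument is a standard ``decoupling via the bad event'' estimate, and the constant $4$ arises exactly as $2 + 2$ from the two applications above.
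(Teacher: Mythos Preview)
Your proof is correct. Both your argument and the paper's rest on the same underlying observation---that on the good event $B=\{\omega_u(\Gamma_{K_1,K_2})=0\}$ the events $E_1,E_2$ decouple---but the implementations differ. The paper argues directly that $E_1$ and $E_2$ are \emph{conditionally independent} given $B$, writes $\BP(E_i)=\BP(E_i\mid B)\BP(B)+\BP(E_i\mid B^c)\BP(B^c)$, and then performs a rather long telescoping chain of triangle inequalities on the resulting products of conditional probabilities to extract the constant $4$. You instead use Poisson thinning to manufacture surrogate events $E_1',E_2'$ that are \emph{exactly} independent, and compare each of $\BP(E_1\cap E_2)$ and $\BP(E_1)\BP(E_2)$ to $\BP(E_1')\BP(E_2')$ via the identity $E_i\cap B=E_i'\cap B$. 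Your route is shorter and makes the source of the constant $4=2+2$ more transparent; the paper's route avoids introducing the auxiliary processes $\omega_u^{(i)}$ and the surrogate events. Either is fine here.
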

\noindent
\begin{proof}
Since the events $E_1,E_2$ are determined by the restrictions of 
$\omega_u$ to the 
subsets $K_1,K_2$ respectively, they are conditionally independent on 
the event that $\omega_u(\Gamma_{K_1, K_2})=0$.
We then see that,
\begin{eqnarray}
\lefteqn{\BP(E_1 \cap E_2) } \label{eq:almostindependency1}\\
& & =\BP(E_1 |\omega_u(\Gamma_{K_1, K_2}) =0)
\BP(E_2|\omega_u(\Gamma_{K_1, K_2}) =0)
\BP(\omega_u(\Gamma_{K_1, K_2}) =0) \nonumber\\
& & \ \ \ + \BP(E_1 \cap E_2| \omega_u(\Gamma_{K_1, K_2}) \neq 0) 
\BP(\omega_u(\Gamma_{K_1, K_2}) \neq 0).\nonumber
\end{eqnarray}
Furthermore, writing
\begin{eqnarray*}
\lefteqn{ \BP(E_i) 
=\BP(E_i | \omega_u(\Gamma_{K_1, K_2}) =0)\BP(\omega_u(\Gamma_{K_1, K_2}) =0)} \\
 && \ \ \ + \BP(E_i | \omega_u(\Gamma_{K_1, K_2}) \neq0)
 \BP(\omega_u(\Gamma_{K_1, K_2}) \neq0)
\end{eqnarray*}
for $i= 1,2$ and using \eqref{eq:almostindependency1}, we see that 
\begin{eqnarray*}
\lefteqn{|\BP(E_1 \cap E_2)-\BP(E_1)\BP(E_2)|}\\
& & \leq |\BP(E_1 |\omega_u(\Gamma_{K_1, K_2}) =0)
\BP(E_2|\omega_u(\Gamma_{K_1, K_2}) =0)
\BP(\omega_u(\Gamma_{K_1, K_2}) =0)-\BP(E_1)\BP(E_2)| \\
& & \hspace{10mm}+\BP(\omega_u(\Gamma_{K_1, K_2}) \neq 0) \\
& & = |\BP(E_1 |\omega_u(\Gamma_{K_1, K_2}) =0)
\BP(E_2|\omega_u(\Gamma_{K_1, K_2}) =0)
\BP(\omega_u(\Gamma_{K_1, K_2}) =0) \\
& & \hspace{10mm}-\BP(E_1)\BP(E_2 |\omega_u(\Gamma_{K_1, K_2})=0)\BP(\omega_u(\Gamma_{K_1, K_2})=0) \\
& & \hspace{10mm}+\BP(E_1)\BP(E_2| \omega_u(\Gamma_{K_1, K_2})\neq 0)\BP(\omega_u(\Gamma_{K_1, K_2})\neq 0)|
+\BP(\omega_u(\Gamma_{K_1, K_2})\neq 0) \\
& & \leq |\BP(E_1 |\omega_u(\Gamma_{K_1, K_2}) =0)
\BP(E_2|\omega_u(\Gamma_{K_1, K_2}) =0)
\BP(\omega_u(\Gamma_{K_1, K_2}) =0) \\
& & \hspace{10mm}-\BP(E_1)\BP(E_2 |\omega_u(\Gamma_{K_1, K_2})=0)\BP(\omega_u(\Gamma_{K_1, K_2})=0)|
+2\BP(\omega_u(\Gamma_{K_1, K_2})\neq 0))\\
& & \leq |\BP(E_1 |\omega_u(\Gamma_{K_1, K_2}) =0)
\BP(E_2|\omega_u(\Gamma_{K_1, K_2}) =0) \\
& & \hspace{10mm}-\BP(E_1)\BP(E_2 |\omega_u(\Gamma_{K_1, K_2})=0)|
+2\BP(\omega_u(\Gamma_{K_1, K_2})\neq 0))\\
& & =|\BP(E_1 |\omega_u(\Gamma_{K_1, K_2}) =0)
\BP(E_2|\omega_u(\Gamma_{K_1, K_2}) =0) \\
& & \hspace{10mm}-\BP(E_2 |\omega_u(\Gamma_{K_1, K_2})=0)
\BP(E_1 |\omega_u(\Gamma_{K_1, K_2})=0)\BP(\omega_u(\Gamma_{K_1, K_2})=0) \\
& & \hspace{10mm}+\BP(E_2 |\omega_u(\Gamma_{K_1, K_2})=0)\BP(E_1| \omega_u(\Gamma_{K_1, K_2})\neq 0)
\BP(\omega_u(\Gamma_{K_1, K_2})\neq 0)| \\
& & \hspace{10mm}+2\BP(\omega_u(\Gamma_{K_1, K_2})\neq 0) \\
& & \leq \BP(E_2 | \omega_u(\Gamma_{K_1, K_2})=0)
\BP(E_1 | \omega_u(\Gamma_{K_1, K_2})=0)(1-\BP(\omega_u(\Gamma_{K_1, K_2})= 0)) \\
& & \hspace{10mm}+3\BP(\omega_u(\Gamma_{K_1, K_2})\neq 0)\\
& &\leq 4\BP(\omega_u(\Gamma_{K_1, K_2})\neq 0). 
\end{eqnarray*}
\end{proof}

We are now ready to state and prove the following proposition mentioned
before Lemma \ref{lemma:E1E2}.
\begin{proposition}\label{prop:smalldistantK}
Let  $K\subset \BZ^2$ and let $\{x_1,\ldots,x_{|K|}\}$ be an enumeration 
of the vertices in $K$. Assume further that $K$ is such that
$|x_i-x_j|\geq |A|^{\frac{1}{\mu(\Gamma_o)}}\kappa^{-1/2}$ for 
every $i\neq j.$ Then we have that 
\[
|\BP(\CT(K)\leq u)-\BP(\CT(o)\leq u)^{|K|}| 
\leq 2|K|^2u |A|^{-\frac{1}{\mu(\Gamma_o)}},
\] 
whenever $u\geq 1,$ $e^9\leq \kappa^{-1}\leq |A|$ and $|A|$ is large enough.
\end{proposition}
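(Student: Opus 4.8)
The plan is to decouple the cover times of the individual vertices $x_1,\dots,x_{|K|}$ by peeling them off one at a time, at each step paying an error controlled by the probability that some loop in $\omega_u$ connects a given vertex to the rest of $K$. Concretely, for $i=1,\dots,|K|$ set $K_i:=\{x_i,\dots,x_{|K|}\}$ (so $K_1=K$ and $K_{|K|}=\{x_{|K|}\}$), and let $E_i$ be the event $\{x_i\in\CC_u\}$ and $F_i$ the event $\{K_{i+1}\subset\CC_u\}=\{\CT(K_{i+1})\le u\}$. The event $E_i$ is determined by the loops of $\omega_u$ that touch the singleton $\{x_i\}$, while $F_i$ is determined by the loops touching $K_{i+1}$; since $x_i\notin K_{i+1}$, Lemma~\ref{lemma:E1E2} applies with $K_1=\{x_i\}$ and $K_2=K_{i+1}$, giving
\[
|\BP(E_i\cap F_i)-\BP(E_i)\BP(F_i)|\le 4\BP(\omega_u(\Gamma_{\{x_i\},K_{i+1}})\neq 0).
\]
Using $\BP(E_i)=\BP(\CT(o)\le u)$ by translation invariance, and iterating over $i$ (a standard telescoping/triangle-inequality argument), one obtains
\[
\bigl|\BP(\CT(K)\le u)-\BP(\CT(o)\le u)^{|K|}\bigr|\le 4\sum_{i=1}^{|K|-1}\BP(\omega_u(\Gamma_{\{x_i\},K_{i+1}})\neq 0).
\]

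The next step is to bound each term $\BP(\omega_u(\Gamma_{\{x_i\},K_{i+1}})\neq 0)$. By a union bound over the vertices of $K_{i+1}$, this is at most $\sum_{j>i}\BP(\omega_u(\Gamma_{x_i}\cap\Gamma_{x_j})\neq 0)$, and by \eqref{eqn:LeJanident} together with $1-(1-t)^u\le ut$ for $t\in[0,1]$, $u\ge 1$,
\[
\BP(\omega_u(\Gamma_{x_i}\cap\Gamma_{x_j})\neq 0)=1-\Bigl(1-\bigl(\tfrac{G^{x_i,x_j}}{G^{o,o}}\bigr)^2\Bigr)^u\le u\bigl(\tfrac{G^{x_i,x_j}}{G^{o,o}}\bigr)^2\le u\,(G^{x_i,x_j})^2,
\]
using $G^{o,o}\ge 1$. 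Now invoke the separation hypothesis: since $|x_i-x_j|\ge |A|^{1/\mu(\Gamma_o)}\kappa^{-1/2}$ and $e^9\le\kappa^{-1}\le|A|$, Proposition~\ref{prop:Goxest1} (with $x=x_i-x_j$, using translation invariance of $G$) gives $G^{x_i,x_j}\le |A|^{-1/(2\mu(\Gamma_o))}$ for $|A|$ large, hence $(G^{x_i,x_j})^2\le |A|^{-1/\mu(\Gamma_o)}$. Therefore each term is at most $u\,|A|^{-1/\mu(\Gamma_o)}$, and summing the double sum over the at most $\binom{|K|}{2}\le |K|^2/2$ relevant pairs yields
\[
\bigl|\BP(\CT(K)\le u)-\BP(\CT(o)\le u)^{|K|}\bigr|\le 4\cdot\tfrac{|K|^2}{2}\cdot u\,|A|^{-\frac{1}{\mu(\Gamma_o)}}=2|K|^2 u\,|A|^{-\frac{1}{\mu(\Gamma_o)}},
\]
which is exactly the claimed bound. (One should also note $\mu(\Gamma_o)\ge 1$ from \eqref{eqn:muGammaolowest}, which is what makes the exponent in Proposition~\ref{prop:Goxest1} usable and the error genuinely small once $|A|$ is large.)

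The main obstacle, and the only place requiring care, is the iteration in the first step: Lemma~\ref{lemma:E1E2} decouples one vertex from a set, but after peeling $x_i$ one must re-express $\BP(F_i)=\BP(\CT(K_{i+1})\le u)$ and recurse, so the bookkeeping is a telescoping sum of $|K|-1$ terms, each of the form $|\BP(\CT(K_i)\le u)-\BP(\CT(o)\le u)\,\BP(\CT(K_{i+1})\le u)|\le 4\BP(\omega_u(\Gamma_{\{x_i\},K_{i+1}})\neq 0)$; combining these with the triangle inequality and the trivial bound $\BP(\CT(o)\le u)\le 1$ gives the displayed sum. Everything else — the union bound, \eqref{eqn:LeJanident}, the inequality $1-(1-t)^u\le ut$, and the input from Proposition~\ref{prop:Goxest1} — is routine.
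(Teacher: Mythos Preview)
Your proof is correct and follows essentially the same route as the paper: peel off one vertex at a time using Lemma~\ref{lemma:E1E2}, bound each decoupling error by a union bound over pairs together with \eqref{eqn:LeJanident}, the inequality $(1-t)^u\ge 1-ut$, $G^{o,o}\ge 1$, and Proposition~\ref{prop:Goxest1}, and then sum the resulting $\binom{|K|}{2}$ contributions to obtain $2|K|^2u|A|^{-1/\mu(\Gamma_o)}$. The only cosmetic difference is the order of presentation (the paper first bounds the single-pair probability and then iterates), but the argument is the same.
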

\noindent
\begin{proof}
We start by noting that by \eqref{eqn:LeJanident}
\[
\BP(\omega_u\cap \Gamma_o \cap \Gamma_x=\emptyset)
=\left(1-\left(\frac{G^{o,x}}{G^{o,o}}\right)^2\right)^{u} 
\geq 1-u\left(\frac{G^{o,x}}{G^{o,o}}\right)^2
\]
where we used the elementary 
inequality $(1-x)^u\geq 1-ux$ for $0<x\leq 1$ and $u\geq 1,$
together with the fact that 
$\frac{G^{o,x}}{G^{o,o}}\leq 1,$ which is an immediate consequence
of \eqref{eqn:Greengenlest}.  Note that it follows from 
\eqref{eqn:muGammaolowest} and the assumption that 
$\kappa^{-1}\geq e^9$ that 
$G^{o,o}\geq \log\left(\frac{\log \kappa^{-1}}{\pi}\right)
\geq 1.$ For $u\geq 1$ we can therefore use Proposition 
\ref{prop:Goxest1} (which uses that 
$|x|\geq |A|^{\frac{1}{\mu(\Gamma_o)}}\kappa^{-1/2}$ and that $|A|$ is large enough) 
to see that
\begin{equation} \label{eqn:probuGoxxGooratio}
\BP(\omega_u\cap \Gamma_o \cap \Gamma_x \neq \emptyset) 
\leq u\left(\frac{G^{o,x}}{G^{o,o}}\right)^2
\leq u\left(G^{o,x}\right)^2\leq u |A|^{-\frac{1}{\mu(\Gamma_o)}}
\end{equation}
for every $x\in \BZ^2$ such that 
$|x|\geq |A|^{\frac{1}{\mu(\Gamma_o)}}\kappa^{-1/2}$.

We then note that for any $1\leq i\leq |K|$ and $u,\kappa$ as in 
the assumptions, we have that 
\begin{eqnarray*}
\lefteqn{\BP\left(\omega_u \cap \Gamma_{K\setminus \{x_i\}} 
\cap \Gamma_{x_i}
\neq \emptyset\right) 
\leq \sum_{x_j\in K\setminus \{x_i\}} 
\BP\left(\omega_u\cap\Gamma_{x_j}\cap \Gamma_{x_i} \neq \emptyset
\right) }\\
& & \leq (|K|-1) 
\max_{y\in K\setminus \{x_i\}}
\BP\left(\omega_u\cap \Gamma_{x_i}\cap \Gamma_{y}
\neq \emptyset\right)
\leq (|K|-1) u |A|^{-\frac{1}{\mu(\Gamma_o)}},
\end{eqnarray*}
since $|y-x_i|\geq |A|^{\frac{1}{\mu(\Gamma_o)}}\kappa^{-1/2}$ 
by assumption on $K,$
and where we used \eqref{eqn:probuGoxxGooratio} in the last inequality.
Using this and Lemma \ref{lemma:E1E2} we see that 
(with $K_1=\{x_1\}$ and $K_2=K\setminus \{x_1\}$)
\begin{eqnarray*}
\lefteqn{|\BP(\CT(K)\leq u)
-\BP(\CT(o)\leq u)\BP(\CT(K \setminus \{x_1\})\leq u)|}\\
& & =|\BP(\CT(K)\leq u)-\BP(\CT(x_1)\leq u)\BP(\CT(K \setminus \{x_1\})\leq u)| \\
& & \leq 4\BP\left(\omega_u \cap \Gamma_{K\setminus \{x_i\}} 
\cap \Gamma_{x_i}
\neq \emptyset\right)
\leq 4(|K|-1)u|A|^{-\frac{1}{\mu(\Gamma_o)}}.
\end{eqnarray*}
By iterating this we see that 
\begin{eqnarray*}
\lefteqn{|\BP(\CT(K)\leq u)-\BP(\CT(o)\leq u)^{|K|}|}\\
& & \leq 4(|K|-1)u|A|^{-\frac{1}{\mu(\Gamma_o)}}
+\BP(\CT(o)\leq u) |\BP(\CT(K\setminus \{x_1\})\leq u)
-\BP(\CT(o)\leq u)^{|K|-1}| \\
& & \leq 4(|K|-1)u|A|^{-\frac{1}{\mu(\Gamma_o)}}
+|\BP(\CT(K\setminus \{x_1\})\leq u)-\BP(\CT(o)\leq u)^{|K|-1}|\\
& & \leq \cdots \leq ((|K|-1)+(|K|-2)+\cdots+1)
4u|A|^{-\frac{1}{\mu(\Gamma_o)}} \\
& &= 4 \frac{(|K|-1)|K|}{2}u|A|^{-\frac{1}{\mu(\Gamma_o)}}
\leq 2|K|^2u|A|^{-\frac{1}{\mu(\Gamma_o)}},
\end{eqnarray*}
which concludes the proof. 
\end{proof}

\section{Second moment estimates} \label{sec:2ndmom}

For $\epsilon\in (0,1)$ we define 
\begin{equation} \label{eqn:defAeps}
A_\epsilon:=\{x\in A: x \cap\CC_{(1-\epsilon) u^*}=\emptyset \},
\end{equation}
so that $A_\epsilon$ is the set of vertices of $A$ which are uncovered at 
time $(1-\epsilon)u^*.$ By using \eqref{eqn:defAeps}, \eqref{eqn:probGreen},
\eqref{eqn:mulogG} and the definition
of $u^*$ (i.e. \eqref{eqn:u*def}) in that order, we see that for $x\in A,$
\begin{eqnarray} \label{eqn:onepoint}
\lefteqn{\BP(x\in A_\epsilon)=\BP(x \cap\CC_{(1-\epsilon) u^*}=\emptyset)
=\left(G^{o,o}\right)^{-(1-\epsilon)u^*}}\\
& & =\exp\left(-(1-\epsilon)u^* \mu(\Gamma_o)\right)
=\exp(-(1-\epsilon) \log |A|)=|A|^{-(1-\epsilon)}. \nonumber
\end{eqnarray}
Therefore,
\begin{equation}\label{eqn:expAeps}
\BE[|A_\epsilon|]=\sum_{x \in A}\BP(x\in A_\epsilon)
=|A|\exp(-(1-\epsilon)u^*\mu(\Gamma_x))
=|A|^\epsilon.
\end{equation}

In order to reach our end goal of this section, we shall need 
to establish a number 
of inequalities dealing with summing $\BP(x,y \in A_\epsilon)$ 
over various ranges of $x,y.$ We will have to consider the cases when 
the distances between $x$ and $y$ are small, 
intermediate and large separately. In addition, in order to make 
the argument work for any 
$\exp(e^{32})<\kappa^{-1}<|A|^{1-\frac{8}{\log \log |A|}}$ 
we will further have to divide the analysis into different cases depending
on the value of $\kappa^{-1}.$ In total we establish four lemmas
(Lemmas  \ref{lemma:doublesumsmalldist}, \ref{lemma:doublesummedest1},
\ref{lemma:doublesummedest2} and \ref{lemma:doublesumlargedist})
concerning such sums, and we then combine these results into Proposition 
\ref{prop:collect}. We note that not all of these results require 
equally strong conditions on $\kappa^{-1}$ and $\epsilon.$ We prefer
to write the actual conditions required in the respective statements 
of each lemma, as this makes 
it easier to see where the constraints lie.
We also note that we will actually only use the results below for 
$\epsilon$ equal to $\frac{1}{100\mu(\Gamma_o)}$ and 
$\frac{1}{400\mu(\Gamma_o)}.$ It may therefore seem superfluous to  
introduce $\epsilon$ at all, but it will make the text less technical 
in the end.

\begin{lemma} \label{lemma:doublesumsmalldist}
For any $e^9\leq \kappa^{-1}\leq |A|$ we have that 
\begin{equation}\label{eqn:doublesumsmalldist}
\sum_{x,y\in A: 1\leq |x-y| 
\leq \left(\kappa^{-1}\right)^{\frac{1}{40 \mu(\Gamma_o)}}} 
\BP(x,y \in A_\epsilon)
\leq |A|^{-\frac{1}{20\mu(\Gamma_o)}},
\end{equation}
for every $0<\epsilon\leq \frac{1}{100 \mu(\Gamma_o)}$ and $|A|$
large enough.
\end{lemma}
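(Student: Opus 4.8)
The plan is to decompose the sum over pairs $x,y$ with $1\le |x-y|\le (\kappa^{-1})^{1/(40\mu(\Gamma_o))}$ according to the scale of $|x-y|$, apply the appropriate bound from Proposition~\ref{prop:probest} on each dyadic range, and sum. For a fixed $x\in A$ and a fixed scale $r$, the number of $y\in A$ with $|x-y|\approx r$ is at most of order $r^2$, so after summing over $x\in A$ each scale contributes roughly $|A|\cdot r^2\cdot \BP(\{o,y\}\cap\CC_{(1-\epsilon)u^*}=\emptyset)$. Using \eqref{eqn:proballdist} for $1\le|x-y|\le 4$ and \eqref{eqn:probmeddist} for $4\le|x-y|\le 2\kappa^{-1}$ (which covers the full range here, since $(\kappa^{-1})^{1/(40\mu(\Gamma_o))}\le \kappa^{-1}\le 2\kappa^{-1}$ using $\mu(\Gamma_o)\ge 1$ from \eqref{eqn:muGammaolowest}), the key point is that the factor $(\log|x-y|/\pi)^{-(1-\epsilon)u^*}$ decays faster in $|x-y|$ than $r^2$ grows, once $(1-\epsilon)u^*$ is large; but $u^*=\log|A|/\mu(\Gamma_o)$, so this is a $\log|A|$-sized power and the decay is enormous.

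Concretely, I would split as follows. First, the contribution from $1\le|x-y|\le 4$: there are at most a constant number (say $\le 40$) of such $y$ for each $x$, so this part is at most $40\,|A|\cdot|A|^{-(1-\epsilon)}(9/8)^{-(1-\epsilon)u^*}=40|A|^{\epsilon}(9/8)^{-(1-\epsilon)u^*}$; since $(9/8)^{-(1-\epsilon)u^*}=|A|^{-(1-\epsilon)\log(9/8)/\mu(\Gamma_o)}$ and $\epsilon\le \frac{1}{100\mu(\Gamma_o)}$ is tiny, this is $\le |A|^{-c/\mu(\Gamma_o)}$ for a suitable constant $c$, in particular $\le \tfrac12|A|^{-1/(20\mu(\Gamma_o))}$ for $|A|$ large. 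Second, the contribution from $4\le |x-y|\le R:=(\kappa^{-1})^{1/(40\mu(\Gamma_o))}$: bound it by
\[
\sum_{x\in A}\ \sum_{y: 4\le|x-y|\le R}|A|^{-(1-\epsilon)}\Bigl(\frac{\log|x-y|}{\pi}\Bigr)^{-(1-\epsilon)u^*}
\le |A|^{\epsilon}\sum_{4\le k\le R} C k\,\Bigl(\frac{\log k}{\pi}\Bigr)^{-(1-\epsilon)u^*},
\]
where $Ck$ bounds the number of lattice points at $\ell^1$-distance $k$. Since the largest relevant $k$ is $R$ and $\log R=\frac{\log\kappa^{-1}}{40\mu(\Gamma_o)}$, while by \eqref{eqn:muGammaolowest}--\eqref{eqn:muGammaoest} we have $G^{o,o}\le \frac{\log\kappa^{-1}}{\pi}+2$, i.e.\ $\mu(\Gamma_o)=\log G^{o,o}$ is comparable to $\log\log\kappa^{-1}$, one checks that $\log R/\pi$ still exceeds $1$ by a definite margin for $\kappa^{-1}\ge e^9$ (this is where the hypothesis is used), so each term is genuinely decaying; bounding the sum by $R^2$ times its largest term and then by $R^2\cdot R\cdot(\log R/\pi)^{-(1-\epsilon)u^*}$ — i.e.\ absorbing the polynomial-in-$R$ factors into a slightly larger exponent — gives a bound of the form $|A|^{\epsilon}\exp\bigl(-(1-\epsilon)u^*\log(\log R/\pi)+O(\log R)\bigr)$. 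Since $u^*\log(\log R/\pi)=\frac{\log|A|}{\mu(\Gamma_o)}\log\bigl(\frac{\log\kappa^{-1}}{40\pi\mu(\Gamma_o)}\bigr)$ and $\log\kappa^{-1}\ge \pi(e^{\mu(\Gamma_o)}-2)$ forces $\frac{\log\kappa^{-1}}{40\pi\mu(\Gamma_o)}$ to be bounded below by a quantity whose logarithm is $\gtrsim 1$ (using $\mu(\Gamma_o)\ge 1$ and $\kappa^{-1}\ge e^9$, possibly with room to spare), this exponent dominates and yields at most $|A|^{-1/(20\mu(\Gamma_o))}/2$ for $|A|$ large.

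The main obstacle is the bookkeeping in the second part: one must verify that $\log\bigl(\log R/\pi\bigr)$ is bounded away from $0$ uniformly over the allowed range $e^9\le\kappa^{-1}\le|A|$, since $\mu(\Gamma_o)$ grows (slowly) with $\kappa^{-1}$ and appears both in the definition of $R$ and in $u^*$. The clean way is to substitute $\mu(\Gamma_o)=\log G^{o,o}$ and use the two-sided bound $\frac{\log\kappa^{-1}}{\pi}+1-\frac{4}{3\pi}\le G^{o,o}\le \frac{\log\kappa^{-1}}{\pi}+2$ to show $\log R=\frac{\log\kappa^{-1}}{40\mu(\Gamma_o)}\ge \frac{\pi(e^{\mu(\Gamma_o)}-2)}{40\mu(\Gamma_o)}$, which is an increasing function of $\mu(\Gamma_o)\ge 1$ bounded below by a constant $>\pi$, so $\log(\log R/\pi)\ge c_0>0$; then $u^*\log(\log R/\pi)\ge c_0\log|A|/\mu(\Gamma_o)$ while the polynomial corrections in $R$ contribute $O(\log R)=O(\log\kappa^{-1}/\mu(\Gamma_o))\le O(\log|A|/\mu(\Gamma_o))$ with a much smaller constant, so choosing constants correctly the net exponent is $\le -\frac{1}{20\mu(\Gamma_o)}\log|A|$. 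Adding the two parts and absorbing constants for $|A|$ large completes the proof.
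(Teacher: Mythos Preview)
There is a genuine gap in your treatment of the range $4\le |x-y|\le R:=(\kappa^{-1})^{1/(40\mu(\Gamma_o))}$. You bound the sum by $R^2\cdot R\cdot(\log R/\pi)^{-(1-\epsilon)u^*}$, implicitly assuming that the summand $k\,(\log k/\pi)^{-(1-\epsilon)u^*}$ attains its maximum at $k=R$. It does not: for $k\in[4,e^\pi]$ one has $\log k/\pi<1$, so $(\log k/\pi)^{-(1-\epsilon)u^*}>1$ and in fact equals a \emph{positive} power of $|A|$ (recall $u^*=\log|A|/\mu(\Gamma_o)$). At $k=4$ this factor is $(\log 4/\pi)^{-(1-\epsilon)u^*}=|A|^{(1-\epsilon)\log(\pi/\log 4)/\mu(\Gamma_o)}\approx |A|^{0.82/\mu(\Gamma_o)}$, which swamps $|A|^\epsilon\cdot R^2$ and gives no useful bound. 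Your later check that $\log R/\pi$ is bounded away from $0$ is also false at the small-$\kappa^{-1}$ end (at $\mu(\Gamma_o)=1$ your own lower bound gives $\log R\ge\pi(e-2)/40\approx 0.06$, nowhere near $\pi$), but that is a secondary issue; the real problem is the behaviour near $k=4$.

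The paper's proof is simpler and avoids this entirely: it uses the uniform bound \eqref{eqn:proballdist}, i.e.\ $\BP(x,y\in A_\epsilon)\le |A|^{-(1-\epsilon)}(9/8)^{-(1-\epsilon)u^*}$, for the \emph{whole} range $1\le|x-y|\le R$, not just for $|x-y|\le 4$. Since $9/8>1$, one gets $(9/8)^{-(1-\epsilon)u^*}=|A|^{-(1-\epsilon)\log(9/8)/\mu(\Gamma_o)}\le |A|^{-1/(9\mu(\Gamma_o))}$, which is a \emph{negative} power of $|A|$ and beats the combinatorial factor $4|A|\cdot R^2\le 4|A|\cdot(\kappa^{-1})^{1/(20\mu(\Gamma_o))}\le 4|A|^{1+1/(20\mu(\Gamma_o))}$. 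Your decomposition into two ranges and use of \eqref{eqn:probmeddist} is unnecessary here; the point of the scale $R=(\kappa^{-1})^{1/(40\mu(\Gamma_o))}$ is precisely that it is small enough for the crude uniform bound \eqref{eqn:proballdist} to win against $R^2$. The finer bound \eqref{eqn:probmeddist} only becomes useful at larger separations, which is where the paper deploys it (Lemmas \ref{lemma:doublesummedest1} and \ref{lemma:doublesummedest2}).
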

\begin{proof}
In order to establish 
\eqref{eqn:doublesumsmalldist}, we 
use \eqref{eqn:proballdist} and the definition of $u^*$ in 
\eqref{eqn:u*def}, together with translation invariance to see 
that 
\begin{eqnarray} \label{eqn:prelcalc}
\lefteqn{\sum_{x,y\in A: 1\leq |x-y| \leq 
\left(\kappa^{-1}\right)^{\frac{1}{40 \mu(\Gamma_o)}}} 
\BP(x,y \in A_\epsilon) \leq  \sum_{x,y\in A: 1\leq |x-y|\leq 
\left(\kappa^{-1}\right)^{\frac{1}{40 \mu(\Gamma_o)}}} 
|A|^{-(1-\epsilon)}
\left(\frac{9}{8}\right)^{-(1-\epsilon)u^*}}\\
& & = \sum_{x,y\in A: 1\leq |x-y|\leq \left(\kappa^{-1}\right)^{\frac{1}{40 \mu(\Gamma_o)}}}  
|A|^{-(1-\epsilon)}
\exp\left(-(1-\epsilon)\log\left(\frac{9}{8}\right)
\frac{\log |A|}{\mu(\Gamma_o)}\right) \nonumber \\
& & = \sum_{x,y\in A: 1\leq |x-y|\leq 
\left(\kappa^{-1}\right)^{\frac{1}{40 \mu(\Gamma_o)}}}  
|A|^{-(1-\epsilon)}
|A|^{-(1-\epsilon)\log\left(\frac{9}{8}\right)\frac{1}{\mu(\Gamma_o)}}
\nonumber \\
& & \leq 4|A|\left(\kappa^{-1}\right)^{\frac{1}{20 \mu(\Gamma_o)}}
|A|^{-(1-\epsilon)}
|A|^{-(1-\epsilon)\log\left(\frac{9}{8}\right)\frac{1}{\mu(\Gamma_o)}}
\leq 4|A|^\epsilon
\left(\kappa^{-1}\right)^{\frac{1}{20 \mu(\Gamma_o)}} 
|A|^{-\frac{1}{9\mu(\Gamma_o)}}, \nonumber
\end{eqnarray}
where we used that 
\[
(1-\epsilon)\log\left(\frac{9}{8}\right)
\geq \left(1-\frac{1}{100\mu(\Gamma_o)}\right)
\log\left(\frac{9}{8}\right)
>\frac{1}{9}
\]
since $\mu(\Gamma_o)\geq 1$ by \eqref{eqn:muGammaolowest} and the fact
that $\kappa^{-1}\geq e^9.$
Furthermore, by our assumption on $\epsilon$ we see that 
\[
4 |A|^\epsilon |A|^{-\frac{1}{9\mu(\Gamma_o)}}
\leq |A|^{-\frac{1}{10\mu(\Gamma_o)}}.
\]
We conclude that 
\begin{eqnarray*}
\lefteqn{\sum_{x,y\in A: 1\leq |x-y| \leq 
\left(\kappa^{-1}\right)^{\frac{1}{40 \mu(\Gamma_o)}}} 
\BP(x,y \in A_\epsilon)}\\
& & \leq \left(\kappa^{-1}\right)^{\frac{1}{20 \mu(\Gamma_o)}} 
|A|^{-\frac{1}{10\mu(\Gamma_o)}}
\leq |A|^{\frac{1}{20\mu(\Gamma_o)}-\frac{1}{10\mu(\Gamma_o)}}
=|A|^{-\frac{1}{20\mu(\Gamma_o)}},
\end{eqnarray*}
where we used that $\kappa^{-1}\leq |A|$ in the last inequality.
This proves \eqref{eqn:doublesumsmalldist}.
\end{proof}
\noindent
{\bf Remark:} Note that even if one replaced the upper 
bound $\left(\kappa^{-1}\right)^{\frac{1}{40 \mu(\Gamma_o)}}$ 
in the summation with $1,$ the bound would not improve much. 
In fact one would obtain 
\[
4|A|^\epsilon |A|^{-\frac{1}{9\mu(\Gamma_o)}}
\leq |A|^{-\frac{1}{10\mu(\Gamma_o)}},
\]
at the end of \eqref{eqn:prelcalc},
leading only to a slight improvement on the current bound of
$|A|^{-\frac{1}{20\mu(\Gamma_o)}}.$ In order to optimize the bound,
an improvement of \eqref{eqn:proballdist} would be required.
However, even an optimal bound in place of \eqref{eqn:proballdist}
may not fundamentally change the result.
\medskip

Our next lemma deals with intermediate scales of separation between 
$x$ and $y.$

\begin{lemma} \label{lemma:doublesummedest1}
Assume that $\exp(e^{32})\leq \kappa^{-1}\leq |A|^{}$
and that $0<\epsilon\leq \frac{1}{100 \mu(\Gamma_o)}.$ Then
for every $|A|$ large enough, 
\begin{equation}\label{eqn:doublesummeddist1}
\sum_{x,y\in A: 
\left(\kappa^{-1}\right)^{\frac{1}{40 \mu(\Gamma_o)}}\leq |x-y|
\leq \kappa^{-1/4}} 
\BP(x,y \in A_\epsilon)
\leq |A|^{-1/7}.
\end{equation}
\end{lemma}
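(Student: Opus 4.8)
\textbf{Proof proposal for Lemma \ref{lemma:doublesummedest1}.}

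The plan is to use the intermediate-distance probability bound \eqref{eqn:probmeddist} from Proposition \ref{prop:probest} together with a crude count of the number of pairs in the relevant annular shell, exactly as in the proof of Lemma \ref{lemma:doublesumsmalldist}, but now extracting a genuine gain from the factor $\left(\frac{\log|x|}{\pi}\right)^{-(1-\epsilon)u^*}$. First I would check that the hypotheses of \eqref{eqn:probmeddist} are met on the range of summation: we need $4\leq|x-y|\leq 2\kappa^{-1}$, and since $|x-y|\leq\kappa^{-1/4}\leq 2\kappa^{-1}$ and $|x-y|\geq(\kappa^{-1})^{1/(40\mu(\Gamma_o))}\geq 4$ for $|A|$ (hence $\kappa^{-1}$) large, this holds. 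Also $\kappa^{-1}\geq\exp(e^{32})>e^{30}$, so Proposition \ref{prop:probest} applies. On this range $|x-y|\geq(\kappa^{-1})^{1/(40\mu(\Gamma_o))}$, so
\[
\log|x-y|\geq \frac{\log\kappa^{-1}}{40\mu(\Gamma_o)},
\]
and therefore $\frac{\log|x-y|}{\pi}\geq\frac{\log\kappa^{-1}}{40\pi\mu(\Gamma_o)}$.

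Next I would plug this into \eqref{eqn:probmeddist} and sum. By translation invariance the number of pairs $(x,y)\in A\times A$ with $|x-y|\leq\kappa^{-1/4}$ is at most $|A|\cdot 4\kappa^{-1/2}$ (a disk of $\ell^1$-radius $\kappa^{-1/4}$ has at most $2(\kappa^{-1/4})^2=2\kappa^{-1/2}$ points, so say at most $4\kappa^{-1/2}$). Hence, using $u^*=\frac{\log|A|}{\mu(\Gamma_o)}$,
\[
\sum_{x,y\in A:\ (\kappa^{-1})^{\frac{1}{40\mu(\Gamma_o)}}\leq|x-y|\leq\kappa^{-1/4}}\BP(x,y\in A_\epsilon)
\leq 4|A|\kappa^{-1/2}\cdot|A|^{-(1-\epsilon)}\left(\frac{\log\kappa^{-1}}{40\pi\mu(\Gamma_o)}\right)^{-(1-\epsilon)u^*}.
\]
The key point is now to bound the last factor. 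Writing $M:=\mu(\Gamma_o)$, the remark after Theorem \ref{thm:main} (i.e.\ equations \eqref{eqn:muGammaoest}--\eqref{eqn:muGammaolowest}) gives $|M-\log\log\kappa^{-1}|<2$, so in particular $\frac{\log\kappa^{-1}}{40\pi M}\geq \frac{\log\kappa^{-1}}{40\pi(\log\log\kappa^{-1}+2)}$, which since $\kappa^{-1}\geq\exp(e^{32})$ is a huge number; a convenient clean lower bound is $\frac{\log\kappa^{-1}}{40\pi M}\geq e^{M}$ (this is exactly where the threshold $\kappa^{-1}\geq\exp(e^{32})$ is used: one checks $\frac{t}{40\pi(\log t+2)}\geq\exp(\log t+2)$ fails for large $t$, so instead one uses the weaker but still very useful $\frac{\log\kappa^{-1}}{40\pi M}\geq M^{7}$, say, or simply $\geq e^{7}$ — whichever makes the bookkeeping cleanest). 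Using $\left(\frac{\log\kappa^{-1}}{40\pi M}\right)^{-(1-\epsilon)u^*}\leq (e^{M})^{-(1-\epsilon)u^*/7}$ together with $e^{-Mu^*}=|A|^{-1}$ (from \eqref{eqn:u*Goo}) converts the awkward factor into a negative power of $|A|$: roughly $\left(\frac{\log\kappa^{-1}}{40\pi M}\right)^{-(1-\epsilon)u^*}\leq |A|^{-(1-\epsilon)/7}$. Collecting everything, and using $\kappa^{-1}\leq|A|$ so that $\kappa^{-1/2}\leq|A|^{1/2}$ and $|A|^\epsilon\leq|A|^{1/(100M)}\leq|A|^{1/100}$, one gets a bound of the form $4|A|^{1/2+\epsilon-(1-\epsilon)/7}$, which for $|A|$ large is $\leq|A|^{-1/7}$ provided the exponent arithmetic works out — here one must be a little careful, since $1/2-1/7>0$; the resolution is that the true lower bound on $\frac{\log\kappa^{-1}}{40\pi M}$ is far larger than $e^7$ (it is superpolynomial in $M$ and, crucially, $\log\kappa^{-1}$ can be taken as large as $(\log|A|)$-ish only when $\kappa^{-1}$ is large, but the $\exp(e^{32})$ floor forces $\frac{\log\kappa^{-1}}{M}$ to be enormous), so the exponent of $|A|$ coming from that factor is much more negative than $-(1-\epsilon)/7$, absorbing the stray $|A|^{1/2}$ from $\kappa^{-1/2}$ with room to spare.

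The step I expect to be the main obstacle is precisely this last balancing: one needs to be honest about how negative the exponent $-(1-\epsilon)u^*\cdot\frac{1}{M}\log\frac{\log\kappa^{-1}}{40\pi M}$ really is, and to beat the positive contribution $|A|^{1/2}$ coming from the $\kappa^{-1/2}$ pair-count. The right way to handle it is to split into two regimes according to the size of $\kappa^{-1}$ relative to $|A|$: when $\kappa^{-1}$ is moderate (say $\kappa^{-1}\leq|A|^{1/\log\log|A|}$ or so) the pair count $\kappa^{-1/2}$ is negligibly small and even a weak lower bound on $\frac{\log\kappa^{-1}}{40\pi M}$ suffices; when $\kappa^{-1}$ is large, $\log\kappa^{-1}$ is comparable to a power of $\log|A|$ while $M\approx\log\log\kappa^{-1}$ is only doubly-logarithmic, so $\frac{1}{M}\log\frac{\log\kappa^{-1}}{40\pi M}$ is of order $\frac{\log\log\kappa^{-1}}{M}\gtrsim 1$ with a large constant, and multiplying by $u^*=\frac{\log|A|}{M}$ produces an exponent $\gtrsim\frac{\log|A|}{M}$ that dominates $\frac{1}{2}\log\kappa^{-1}/\log|A|\cdot\log|A|$-type terms because $\kappa^{-1/2}=|A|^{\le 1/2}$ while the gain is $|A|^{\Omega(1/M)}$ raised to a large multiple. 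In both regimes one lands comfortably below $|A|^{-1/7}$; the constant $1/7$ (rather than something sharper) is chosen to give slack and to survive the crude pair-counting. Finally I would note, as the authors do in their remarks, that it is exactly this computation — and the need for $\frac{\log\kappa^{-1}}{M}$ to be large — that forces both the lower bound $\kappa^{-1}\geq\exp(e^{32})$ and, downstream, the upper bound $\kappa^{-1}\leq|A|^{1-8/\log\log|A|}$.
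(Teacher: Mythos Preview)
Your setup is exactly right: you apply \eqref{eqn:probmeddist}, count pairs by $4|A|\kappa^{-1/2}$, and correctly isolate the key factor $\left(\frac{\log\kappa^{-1}}{40\pi\mu(\Gamma_o)}\right)^{-(1-\epsilon)u^*}$. The gap is entirely in how you estimate this factor. Your bound $|A|^{-(1-\epsilon)/7}$ (via ``$\geq e^M$'' or ``$\geq M^7$'' or ``$\geq e^7$'') is derived in a muddled way and, as you yourself notice, is too weak: it cannot beat the $|A|^{1/2}$ coming from $\kappa^{-1/2}\leq |A|^{1/2}$. Your proposed rescue by splitting into regimes of $\kappa^{-1}$ is not carried out and in fact cannot work as stated, since the lower bound $\kappa^{-1}\geq \exp(e^{32})$ already forces $\kappa^{-1/2}$ to be enormous in every regime.

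The clean fix is a single line you are missing. Write $M=\mu(\Gamma_o)$. By \eqref{eqn:muGammaoest} one has $M\leq \log\log\kappa^{-1}$ (this is where $\kappa^{-1}\geq\exp(e^{32})$ is first used, to absorb the $+2$). Hence
\[
\frac{1}{M}\log\!\left(\frac{\log\kappa^{-1}}{40\pi M}\right)
=\frac{\log\log\kappa^{-1}-\log(40\pi)-\log M}{M}
\;\geq\; 1-\frac{\log(40\pi)+\log M}{M}\;\geq\;\frac{2}{3},
\]
the last step using $M\geq 30$ (again from $\kappa^{-1}\geq\exp(e^{32})$ and \eqref{eqn:muGammaolowest}). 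This gives $\left(\frac{\log\kappa^{-1}}{40\pi M}\right)^{-(1-\epsilon)u^*}\leq |A|^{-\frac{2}{3}(1-\epsilon)}$, and then
\[
4|A|^{\epsilon}\kappa^{-1/2}|A|^{-\frac{2}{3}(1-\epsilon)}
\leq 4|A|^{2\epsilon-\frac{2}{3}}\kappa^{-1/2}
\leq 4|A|^{2\epsilon-\frac{1}{6}}
\leq |A|^{-1/7},
\]
using $\kappa^{-1}\leq|A|$, $\epsilon\leq\frac{1}{100 M}\leq \frac{1}{3000}$, and $|A|$ large. This is exactly the paper's argument; no case split is needed.
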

\begin{proof}
In order to prove \eqref{eqn:doublesummeddist1}, 
we will use \eqref{eqn:probmeddist}, and therefore we observe that 
$|x-y|\leq \kappa^{-1/4}\leq 2\kappa^{-1}.$
Next, we observe that by \eqref{eqn:muGammaoest} we have that
\begin{equation} \label{eqn:mugammaosimplified}
\mu(\Gamma_o)
\leq \log \left(\frac{\log \kappa^{-1}}{\pi}+2\right)
\leq \log \log \kappa^{-1},
\end{equation}
which holds since we assume that $\kappa^{-1}\geq\exp(e^{32}).$
Therefore,
\[
|x-y|\geq \left(\kappa^{-1}\right)^{\frac{1}{40 \mu(\Gamma_o)}}
\geq \left(\kappa^{-1}\right)^{\frac{1}{40 \log \log \kappa^{-1}}}
\geq 4
\]
where the last inequality is easily checked to hold for
$\kappa^{-1} \geq \exp(e^{32})$ as in our assumption.
Hence,
the requirements for \eqref{eqn:probmeddist} are satisfied. 
We then use \eqref{eqn:u*def} and \eqref{eqn:probmeddist} to obtain that
\begin{eqnarray} \label{eqn:doublesummeddist11}
\lefteqn{\sum_{x,y\in A: 
\left(\kappa^{-1}\right)^{\frac{1}{40 \mu(\Gamma_o)}}\leq |x-y|
\leq \kappa^{-1/4}} 
\BP(x,y \in A_\epsilon)}\\
& & \leq \sum_{x,y\in A: 
\left(\kappa^{-1}\right)^{\frac{1}{40 \mu(\Gamma_o)}}\leq |x-y|
\leq \kappa^{-1/4}} 
|A|^{-(1-\epsilon)}
\left(\frac{\log |x-y|}{\pi}\right)^{-(1-\epsilon)u^*}
\nonumber \\
& & = \sum_{x,y\in A: 
\left(\kappa^{-1}\right)^{\frac{1}{40 \mu(\Gamma_o)}}\leq |x-y|
\leq \kappa^{-1/4}} 
|A|^{-(1-\epsilon)}
|A|^{-(1-\epsilon)\frac{\log \log |x-y|^{1/\pi}}{\mu(\Gamma_o)}} \nonumber\\
& & \leq  \sum_{x,y\in A: 
\left(\kappa^{-1}\right)^{\frac{1}{40 \mu(\Gamma_o)}}\leq |x-y|
\leq \kappa^{-1/4}}
|A|^{-(1-\epsilon)}
|A|^{-(1-\epsilon)
\frac{\log \log \left(\kappa^{-1}\right)
^{\frac{1}{40 \pi \mu(\Gamma_o)}}}{\mu(\Gamma_o)}} \nonumber\\
& & \leq|A| 
\left(2\kappa^{-1/4}\right)^2
|A|^{-(1-\epsilon)}
|A|^{-(1-\epsilon)
\frac{\log \log \left(\kappa^{-1}\right)
^{\frac{1}{40 \pi \mu(\Gamma_o)}}}{\mu(\Gamma_o)}}.
\nonumber
\end{eqnarray}
By again using \eqref{eqn:mugammaosimplified}, we see that
\begin{eqnarray} \label{eqn:doublesummeddist12}
\lefteqn{\frac{\log \log \left(\kappa^{-1}\right)
^{\frac{1}{40 \pi \mu(\Gamma_o)}}}{\mu(\Gamma_o)}
}\\
& &=\frac{\log \log \kappa^{-1}-\log(40 \pi)- \log(\mu(\Gamma_o))}
{\mu(\Gamma_o)}
\geq 1
-\frac{\log(40 \pi)+ \log(\mu(\Gamma_o))}{\mu(\Gamma_o)}
\geq \frac{2}{3}, \nonumber
\end{eqnarray}
where the last inequality follows since, by 
\eqref{eqn:muGammaolowest} and the fact that 
$\kappa^{-1}>\exp\left(e^{32}\right)$, we have that
\[
\mu(\Gamma_o)
\geq \log \left(\frac{\log \kappa^{-1}}{\pi}+1-\frac{4}{3\pi}\right)
\geq 30.
\]
Using \eqref{eqn:doublesummeddist11} and \eqref{eqn:doublesummeddist12} 
we see that 
\begin{eqnarray*} 
\lefteqn{\sum_{x,y\in A: 
\left(\kappa^{-1}\right)^{\frac{1}{40 \mu(\Gamma_o)}}\leq |x-y|
\leq \kappa^{-1/4}} 
\BP(x,y \in A_\epsilon)}\\
& & \leq|A| 
\left(2\kappa^{-1/4}\right)^2
|A|^{-(1-\epsilon)}
|A|^{-(1-\epsilon)
\frac{\log \log \left(\kappa^{-1}\right)
^{\frac{1}{40 \pi \mu(\Gamma_o)}}}{\mu(\Gamma_o)}} \\
& & \leq 4|A|^\epsilon \kappa^{-1/2}
|A|^{-(1-\epsilon)\frac{2}{3}}
 \leq 4|A|^{2\epsilon-\frac{2}{3}}\kappa^{-1/2}
\leq 4|A|^{2\epsilon-\frac{1}{6}} \\
& & \leq 4|A|^{\frac{2}{100 \mu(\Gamma_o)}-\frac{1}{6}} 
\leq 4|A|^{\frac{2}{3000}-\frac{1}{6}} 
\leq |A|^{-\frac{1}{7}},
\end{eqnarray*}
where we used that $\kappa^{-1}\leq |A|$ in the fourth inequality,
that $\epsilon<\frac{1}{100 \mu(\Gamma_o)}$ in the fifth inequality,
that $\mu(\Gamma_o)\geq 30$ in the penultimate inequality, and finally that 
$|A|$ is taken large enough in the last inequality.
\end{proof}

Our next lemma is an intermediate result which we will use to prove 
Lemma \ref{lemma:doublesummedest2}.
\begin{lemma} \label{lemma:kappainvupper}
For any $\kappa^{-1}$ such that
$\exp\left(e^{32}\right)\leq \kappa^{-1}
\leq |A|^{1-\frac{8}{\log \log |A|}},$
we have that 
\[
\kappa^{-1}\leq |A|^{1-\frac{6}{\mu(\Gamma_o)}},
\]
for every $|A|$ large enough.
\end{lemma}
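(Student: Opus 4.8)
\noindent
The plan is to pass to logarithms and to split the range of $\kappa^{-1}$ into two regimes, with break point $|A|^{4/5}$, according to whether $\kappa^{-1}$ is a ``small'' or a ``large'' power of $|A|$. Since $|A|>1$, the asserted inequality $\kappa^{-1}\leq |A|^{1-6/\mu(\Gamma_o)}$ is equivalent to $\log\kappa^{-1}\leq (1-6/\mu(\Gamma_o))\log|A|$. The argument uses two consequences of the lower bound \eqref{eqn:muGammaolowest}: first, the hypothesis $\kappa^{-1}\geq \exp(e^{32})$ gives $\mu(\Gamma_o)\geq \log(e^{32}/\pi)\geq 30$ (this is the same computation as in the proof of Lemma~\ref{lemma:doublesummedest1}); second, dropping the positive constant $1-\frac{4}{3\pi}$ in \eqref{eqn:muGammaolowest} gives the softer, scale-sensitive bound $\mu(\Gamma_o)\geq \log\log\kappa^{-1}-\log\pi$, valid throughout the range.

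First I would dispose of the case $\kappa^{-1}\leq |A|^{4/5}$. Here $\mu(\Gamma_o)\geq 30$ forces $6/\mu(\Gamma_o)\leq 1/5$, hence $1-6/\mu(\Gamma_o)\geq 4/5$, and therefore $|A|^{1-6/\mu(\Gamma_o)}\geq |A|^{4/5}\geq \kappa^{-1}$; no largeness of $|A|$ is needed in this case.

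Next I would treat the complementary case $|A|^{4/5}<\kappa^{-1}\leq |A|^{1-8/\log\log|A|}$. Now $\log\kappa^{-1}>\frac45\log|A|$, so $\log\log\kappa^{-1}>\log\log|A|-\log\frac54$, and plugging this into the second bound above yields $\mu(\Gamma_o)>\log\log|A|-\log\frac{5\pi}{4}$. For $|A|$ large enough --- precisely, once $\frac14\log\log|A|\geq \log\frac{5\pi}{4}$ --- this is at least $\frac34\log\log|A|$, so $6/\mu(\Gamma_o)\leq 8/\log\log|A|$ and hence $1-6/\mu(\Gamma_o)\geq 1-8/\log\log|A|$. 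Raising $|A|>1$ to these two exponents and using the hypothesis on $\kappa^{-1}$ gives $|A|^{1-6/\mu(\Gamma_o)}\geq |A|^{1-8/\log\log|A|}\geq \kappa^{-1}$, which completes the proof.

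I expect no genuine obstacle: the only place ``$|A|$ large enough'' is invoked is the elementary comparison $\frac14\log\log|A|\geq\log\frac{5\pi}{4}$, and everything else is a direct substitution of \eqref{eqn:muGammaolowest} together with monotonicity of $t\mapsto|A|^t$. The one point worth care is that neither lower bound on $\mu(\Gamma_o)$ suffices on its own: the constant bound $\mu(\Gamma_o)\geq 30$ is too weak when $\kappa^{-1}$ is a large power of $|A|$, whereas $\mu(\Gamma_o)\gtrsim\log\log\kappa^{-1}$ is too weak when $\kappa^{-1}$ is of constant order relative to $|A|$; the split at $|A|^{4/5}$ is chosen so that $4/5$ is simultaneously $\leq 1-6/30$ (needed in the first case) and large enough that in the second case $\log\log\kappa^{-1}$ tracks $\log\log|A|$ up to an additive constant.
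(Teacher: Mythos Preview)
Your proof is correct and follows essentially the same approach as the paper: the same split at $|A|^{4/5}$, using $\mu(\Gamma_o)\geq 30$ in the small-$\kappa^{-1}$ case and $\mu(\Gamma_o)\gtrsim\log\log\kappa^{-1}\gtrsim\frac34\log\log|A|$ in the large-$\kappa^{-1}$ case. Your write-up is, if anything, slightly more careful in pinpointing exactly where ``$|A|$ large enough'' enters.
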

\begin{proof}
If $\kappa^{-1} \geq |A|^{4/5}$, we can use
\eqref{eqn:muGammaolowest} to see that
\[
\mu(\Gamma_o)\geq \log \left(\frac{\log \kappa^{-1}}{\pi}\right)
\geq \log \log |A|^{4/(5\pi)}
\geq \frac{3}{4}\log \log |A|,
\]
whenever $|A|$ is large enough. Therefore we see that 
\[
|A|^{1-\frac{6}{\mu(\Gamma_o)}}
\geq |A|^{1-\frac{8}{\log \log |A|}}
\geq \kappa^{-1},
\]
as desired.

If $\exp\left(e^{32}\right)\leq \kappa^{-1}\leq |A|^{4/5}$,
\eqref{eqn:muGammaolowest} and $\kappa^{-1} \geq \exp(e^{32})$ imply that
$\mu(\Gamma_o)\geq \log \left(\frac{\log \kappa^{-1}}{\pi}\right)
\geq 30$, so that
\[
|A|^{1-\frac{6}{\mu(\Gamma_o)}}
\geq |A|^{1-\frac{6}{30}}=|A|^{4/5}\geq \kappa^{-1},
\]
which conclude the proof.
\end{proof}

\begin{lemma} \label{lemma:doublesummedest2}
Assume that $\exp(e^{32})\leq \kappa^{-1}
\leq |A|^{1-\frac{8}{\log \log |A|}}$
and that $0<\epsilon\leq \frac{1}{100 \mu(\Gamma_o)}.$ Then
for every $|A|$ large enough, 
\begin{equation}\label{eqn:doublesummeddist2}
\sum_{x,y\in A: 
\kappa^{-1/4}\leq |x-y|
\leq |A|^{\frac{1}{\mu(\Gamma_o)}}\kappa^{-1/2}} 
\BP(x,y \in A_\epsilon)
\leq |A|^{-\frac{1}{\mu(\Gamma_o)}}.
\end{equation}
\end{lemma}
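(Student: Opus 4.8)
The plan is to use the large-distance probability estimate \eqref{eqn:problargedist} from Proposition~\ref{prop:probest}, since now $|x-y|\geq \kappa^{-1/4}$, and in particular $|x-y|\geq 2\kappa^{-1}$ need not hold; however, the key point is that once $|x-y|\geq\kappa^{-1/4}$ the bound \eqref{eqn:probmeddist} already gives a factor $(\log|x-y|/\pi)^{-(1-\epsilon)u^*}\leq (\log\kappa^{-1/4}/\pi)^{-(1-\epsilon)u^*}$, which is essentially of the same strength as \eqref{eqn:problargedist}. So first I would split the range of $|x-y|$ at $2\kappa^{-1}$: for $\kappa^{-1/4}\leq|x-y|\leq 2\kappa^{-1}$ apply \eqref{eqn:probmeddist} with the monotone bound $\log|x-y|\geq\log\kappa^{-1/4}=\tfrac14\log\kappa^{-1}$, and for $2\kappa^{-1}\leq|x-y|\leq |A|^{1/\mu(\Gamma_o)}\kappa^{-1/2}$ apply \eqref{eqn:problargedist} directly. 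In both cases the per-pair probability is at most $|A|^{-(1-\epsilon)}\big(\tfrac{\log\kappa^{-1}}{8\pi}\big)^{-(1-\epsilon)u^*}$, up to harmless constants.

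Next I would count pairs: the number of $(x,y)\in A^2$ with $|x-y|\leq |A|^{1/\mu(\Gamma_o)}\kappa^{-1/2}$ is at most $|A|\cdot C\,|A|^{2/\mu(\Gamma_o)}\kappa^{-1}$. Combining, the whole sum is bounded by
\[
C\,|A|^{1+\frac{2}{\mu(\Gamma_o)}}\kappa^{-1}\cdot|A|^{-(1-\epsilon)}
\left(\frac{\log\kappa^{-1}}{8\pi}\right)^{-(1-\epsilon)u^*}.
\]
Using $u^*=\log|A|/\mu(\Gamma_o)$, the last factor is $|A|^{-(1-\epsilon)\frac{\log(\log\kappa^{-1}/(8\pi))}{\mu(\Gamma_o)}}$. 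Now invoke Lemma~\ref{lemma:kappainvupper}, which gives $\kappa^{-1}\leq|A|^{1-6/\mu(\Gamma_o)}$; the exponent of $|A|$ then becomes at most
\[
1+\frac{2}{\mu(\Gamma_o)}+\left(1-\frac{6}{\mu(\Gamma_o)}\right)-(1-\epsilon)
-(1-\epsilon)\frac{\log(\log\kappa^{-1}/(8\pi))}{\mu(\Gamma_o)}+o(1).
\]
Since $\kappa^{-1}\geq\exp(e^{32})$ forces $\mu(\Gamma_o)\geq 30$ (via \eqref{eqn:muGammaolowest}) and, as in Lemma~\ref{lemma:doublesummedest1}, $\frac{\log(\log\kappa^{-1}/(8\pi))}{\mu(\Gamma_o)}\geq\frac23$ using \eqref{eqn:mugammaosimplified}, the displayed exponent is at most $1-\frac{4}{\mu(\Gamma_o)}+\epsilon-\frac23(1-\epsilon)+o(1)\leq -\frac{1}{\mu(\Gamma_o)}$ for $|A|$ large, after absorbing $\epsilon\leq\frac{1}{100\mu(\Gamma_o)}$.

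The main obstacle, and the reason this lemma needs the \emph{upper} bound on $\kappa^{-1}$, is the tension between the pair count $|A|^{1+2/\mu(\Gamma_o)}\kappa^{-1}$ and the decay $|A|^{-(1-\epsilon)(1+\frac{2}{3})}$ coming from the one-point factor and the logarithmic gain: to make the product decay like $|A|^{-1/\mu(\Gamma_o)}$ one needs $\kappa^{-1}$ not to exceed roughly $|A|^{1-c/\mu(\Gamma_o)}$, which is exactly what Lemma~\ref{lemma:kappainvupper} extracts from the hypothesis $\kappa^{-1}\leq|A|^{1-8/\log\log|A|}$. The bookkeeping of the various $o(1)$ and constant factors, and checking that the $\log(8\pi)$ and $\log\mu(\Gamma_o)$ corrections are genuinely dominated once $\mu(\Gamma_o)\geq 30$, is where the care lies, but it is all elementary along the lines already used in the proof of Lemma~\ref{lemma:doublesummedest1}.
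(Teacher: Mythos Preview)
Your overall strategy---splitting at $2\kappa^{-1}$, applying \eqref{eqn:probmeddist} and \eqref{eqn:problargedist}, counting pairs, and then invoking Lemma~\ref{lemma:kappainvupper}---is exactly the paper's approach. However, the final exponent computation contains a fatal error: you import the bound $\frac{\log(\log\kappa^{-1}/(8\pi))}{\mu(\Gamma_o)}\geq\tfrac23$ from the proof of Lemma~\ref{lemma:doublesummedest1}, and this is far too weak here. Computing your displayed exponent with $\tfrac23$ gives
\[
1+\frac{2}{\mu(\Gamma_o)}+\Bigl(1-\frac{6}{\mu(\Gamma_o)}\Bigr)-(1-\epsilon)-\frac{2}{3}(1-\epsilon)
=\frac{1}{3}-\frac{4}{\mu(\Gamma_o)}+\frac{5\epsilon}{3},
\]
which for $\mu(\Gamma_o)\geq 30$ is at least $\tfrac13-\tfrac{2}{15}=\tfrac15>0$. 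Your sum is therefore bounded only by a \emph{positive} power of $|A|$, not by $|A|^{-1/\mu(\Gamma_o)}$.

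The reason the $\tfrac23$ bound sufficed in Lemma~\ref{lemma:doublesummedest1} is that there the pair count carried only a factor $\kappa^{-1/2}$, whereas here it carries $\kappa^{-1}$, which can be as large as $|A|^{1-6/\mu(\Gamma_o)}$. To beat this you must use the full strength of \eqref{eqn:mugammaosimplified}: since $\mu(\Gamma_o)\leq\log\log\kappa^{-1}$, one has
\[
\frac{\log\log\kappa^{-1/(4\pi)}}{\mu(\Gamma_o)}
=\frac{\log\log\kappa^{-1}-\log(4\pi)}{\mu(\Gamma_o)}
\geq 1-\frac{\log(4\pi)}{\mu(\Gamma_o)},
\]
which is nearly $1$, not merely $\tfrac23$. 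With this sharper bound the per-pair decay is $|A|^{-(1-\epsilon)(2-\log(4\pi)/\mu(\Gamma_o))}$, and the total exponent becomes at most $2\epsilon+\frac{2+\log(4\pi)}{\mu(\Gamma_o)}\leq\frac{5}{\mu(\Gamma_o)}$ times $(\kappa|A|)^{-1}\leq |A|^{-6/\mu(\Gamma_o)}$, yielding the desired $|A|^{-1/\mu(\Gamma_o)}$. This is precisely where the delicate balance lies that forces the upper bound on $\kappa^{-1}$ in the hypothesis.
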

\begin{proof}
In order to prove \eqref{eqn:doublesummeddist2}, 
we will again use \eqref{eqn:probmeddist}, and to that end we observe
that $4\leq \kappa^{-1/4}< 2\kappa^{-1}$ by our assumption that 
$\kappa^{-1}\geq \exp(e^{32}).$ Then, for any 
$\kappa^{-1/4}\leq |x-y| 
\leq \min\left(2 \kappa^{-1},
|A|^{\frac{1}{\mu(\Gamma_o)}}\kappa^{-1/2}\right)$ we have that 
by \eqref{eqn:probmeddist}
\begin{eqnarray} \label{eqn:doublesummeddist21}
\lefteqn{\BP(x,y \in A_\epsilon)
\leq |A|^{-(1-\epsilon)}
\left(\frac{\log |x-y|}{\pi}\right)^{-(1-\epsilon)u^*}
\leq |A|^{-(1-\epsilon)}
\left(\log \kappa^{-1/(4 \pi)}\right)^{-(1-\epsilon)u^*}}\\
& & = |A|^{-(1-\epsilon)}\exp\left(-(1-\epsilon)\frac{\log |A|}{\mu(\Gamma_o)}
\log \log \kappa^{-1/(4\pi)}\right)
=|A|^{-(1-\epsilon)}|A|^{-(1-\epsilon)
\frac{\log \log \kappa^{-1/(4\pi)}}{\mu(\Gamma_o)}}. \nonumber
\end{eqnarray}
If instead $|x-y|\geq 2 \kappa^{-1},$ we use
\eqref{eqn:problargedist} to observe that 
\[
\BP(x,y \in A_\epsilon)
\leq |A|^{-(1-\epsilon)}
\left(\frac{\log \kappa^{-1}}{2\pi}\right)^{-(1-\epsilon)u^*}
\leq |A|^{-(1-\epsilon)}
\left(\frac{\log \kappa^{-1}}{4\pi}\right)^{-(1-\epsilon)u^*}
\]
and so \eqref{eqn:doublesummeddist21} holds for every $x,y$ such 
that $\kappa^{-1/4}\leq |x-y|
\leq |A|^{\frac{1}{\mu(\Gamma_o)}}\kappa^{-1/2}.$
It follows that 
\begin{eqnarray} \label{eqn:Gammaoalpha}
\lefteqn{\sum_{x,y\in A: 
\kappa^{-1/4}\leq |x-y|
\leq |A|^{\frac{1}{\mu(\Gamma_o)}}\kappa^{-1/2}} 
\BP(x,y \in A_\epsilon)}\\
& & \leq  \sum_{x,y\in A: 
\kappa^{-1/4}\leq |x-y|
\leq |A|^{\frac{1}{\mu(\Gamma_o)}}\kappa^{-1/2}}
|A|^{-(1-\epsilon)}
|A|^{-(1-\epsilon)
\frac{\log \log \kappa^{-1/(4\pi)}}{\mu(\Gamma_o)}} \nonumber\\
& & \leq|A| 
\left(2|A|^{\frac{1}{\mu(\Gamma_o)}}\kappa^{-1/2}\right)^2
|A|^{-(1-\epsilon)}
|A|^{-(1-\epsilon)
\frac{\log \log \kappa^{-1/(4\pi)}}{\mu(\Gamma_o)}}.
\nonumber
\end{eqnarray}
As in the proof of \eqref{eqn:doublesummeddist12} we have that
\[
\frac{\log \log \kappa^{-1/(4\pi)}}{\mu(\Gamma_o)}
=\frac{\log \log \kappa^{-1}-\log(4 \pi)}{\mu(\Gamma_o)}
\geq 1-\frac{\log(4 \pi)}{\mu(\Gamma_o)}.
\]
We therefore see that 
\begin{eqnarray} \label{eqn:doublesummeddist22}
\lefteqn{\sum_{x,y\in A: 
\kappa^{-1/4}\leq |x-y|
\leq |A|^{\frac{1}{\mu(\Gamma_o)}}\kappa^{-1/2}} 
\BP(x,y \in A_\epsilon)}\\
& & \leq|A| 
\left(2|A|^{\frac{1}{\mu(\Gamma_o)}}\kappa^{-1/2}\right)^2
|A|^{-(1-\epsilon)}
|A|^{-(1-\epsilon)
\frac{\log \log \kappa^{-1/(4\pi)}}{\mu(\Gamma_o)}} \nonumber \\
& & \leq 4|A|^{\epsilon+\frac{2}{\mu(\Gamma_o)}} \kappa^{-1}
|A|^{-(1-\epsilon)\left(1-\frac{\log(4 \pi)}{\mu(\Gamma_o)}
\right)}
 \leq 4|A|^{2\epsilon+\frac{2}{\mu(\Gamma_o)}+\frac{\log(4 \pi)}{\mu(\Gamma_o)}} (\kappa|A|)^{-1} \nonumber  \\
& & \leq 4|A|^{\frac{2}{100\mu(\Gamma_o)}+\frac{2}{\mu(\Gamma_o)}+\frac{\log(4 \pi)}{\mu(\Gamma_o)}} (\kappa|A|)^{-1} 
\leq |A|^{\frac{5}{\mu(\Gamma_o)}} (\kappa|A|)^{-1}, \nonumber 
\end{eqnarray}
where we used that $\epsilon<\frac{1}{100 \mu(\Gamma_o)}$
in the penultimate inequality.
Finally, it follows from Lemma \ref{lemma:kappainvupper} 
(which uses that $\kappa^{-1}\geq \exp(e^{32})$) that
$\kappa \geq |A|^{-1+\frac{6}{\mu(\Gamma_o)}}$ and so
\[
(\kappa |A|)^{-1} 
|A|^{\frac{5}{\mu(\Gamma_o)}}
\leq 
|A|^{-\frac{6}{\mu(\Gamma_o)}}|A|^{\frac{5}{\mu(\Gamma_o)}}
\leq |A|^{-\frac{1}{\mu(\Gamma_o)}},
\]
which concludes the proof.
\end{proof}
\noindent
{\bf Remark:} As we shall see, the above lemma is the only one that
requires the upper bound on $\kappa^{-1},$ i.e. that  
$\kappa^{-1}\leq |A|^{1-\frac{8}{\log \log |A|}}.$ The other 
lemmas of this 
section only require that $\kappa^{-1}\leq |A|$ (and in addition, 
with some extra effort this bound can be relaxed). If we changed
the summation to be over the range $\kappa^{-1/4}\leq |x-y|
\leq \log |A|\kappa^{-1/2}$ (or so)  
instead of $\kappa^{-1/4}\leq |x-y|
\leq |A|^{\frac{1}{\mu(\Gamma_o)}}\kappa^{-1/2}$, then this would 
improve the bound somewhat. However, since the factor 
$|A|^{\frac{\log(4\pi)}{\mu(\Gamma_o)}}$ would still remain in the 
summation \eqref{eqn:doublesummeddist22}, this would only lead to 
a slight improvement of the upper bound of $\kappa^{-1}.$

\medskip

Our next lemma sums over pairs that are well separated.

\begin{lemma} \label{lemma:doublesumlargedist}
For any $e^9\leq \kappa^{-1} \leq |A|$ we have that 
\begin{equation}\label{eqn:doublesumlargedist}
\sum_{x,y\in A: |A|^{\frac{1}{\mu(\Gamma_o)}}\kappa^{-1/2}
\leq |x-y|} 
\BP(x,y \in A_\epsilon)
\leq |A|^{2\epsilon}\left(1+|A|^{-\frac{1}{2\mu(\Gamma_o)}}\right),
\end{equation}
whenever $0<\epsilon<1/2$ and $|A|$ is large enough.
\end{lemma}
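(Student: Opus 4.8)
The plan is to bound the sum by $|A|^2$ times the maximum of $\BP(x,y\in A_\epsilon)$ over admissible pairs, and then to show that this maximum is at most $|A|^{-2(1-\epsilon)}\bigl(1+|A|^{-1/(2\mu(\Gamma_o))}\bigr)$, so that the factor $|A|^2$ produces exactly the claimed $|A|^{2\epsilon}\bigl(1+|A|^{-1/(2\mu(\Gamma_o))}\bigr)$.

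First I would fix $x,y\in A$ with $|x-y|\geq|A|^{1/\mu(\Gamma_o)}\kappa^{-1/2}$ and use translation invariance together with \eqref{eqn:probxyuncov} to write
\[
\BP(x,y\in A_\epsilon)=\BP\bigl(\{o,x-y\}\cap\CC_{(1-\epsilon)u^*}=\emptyset\bigr)=\bigl((G^{o,o})^2-(G^{o,x-y})^2\bigr)^{-(1-\epsilon)u^*}.
\]
Factoring out $(G^{o,o})^{-2(1-\epsilon)u^*}$, which equals $|A|^{-2(1-\epsilon)}$ by \eqref{eqn:u*Goo}, leaves the correction factor $\bigl(1-(G^{o,x-y}/G^{o,o})^2\bigr)^{-(1-\epsilon)u^*}$ to be controlled.

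For this, Proposition \ref{prop:Goxest1} applies, since its hypotheses $e^9\leq\kappa^{-1}\leq|A|$ and $|x-y|\geq|A|^{1/\mu(\Gamma_o)}\kappa^{-1/2}$ are exactly ours; it gives $G^{o,x-y}\leq|A|^{-1/(2\mu(\Gamma_o))}$. Moreover \eqref{eqn:Goolowest} with $\kappa^{-1}\geq e^9$ gives $G^{o,o}\geq 1$, so $t:=(G^{o,x-y}/G^{o,o})^2\leq|A|^{-1/\mu(\Gamma_o)}$. Writing $s:=u^*=\log|A|/\mu(\Gamma_o)$, one has $t\leq e^{-s}$, and the key point is that, since $\kappa^{-1}\leq|A|$, estimate \eqref{eqn:muGammaoest} forces $\mu(\Gamma_o)$ to be of order at most $\log\log|A|$, hence $s\to\infty$ as $|A|\to\infty$. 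Therefore, for $|A|$ large, $u^*t\leq se^{-s}=\bigl(se^{-s/2}\bigr)e^{-s/2}\leq\tfrac14 e^{-s/2}=\tfrac14|A|^{-1/(2\mu(\Gamma_o))}$. Using $-\log(1-t)\leq2t$ for $t\leq\tfrac12$ and then $e^{y}\leq1+2y$ for $y\leq\tfrac12$, I obtain
\[
\bigl(1-t\bigr)^{-(1-\epsilon)u^*}\leq\exp\bigl(2(1-\epsilon)u^*t\bigr)\leq1+4(1-\epsilon)u^*t\leq1+|A|^{-1/(2\mu(\Gamma_o))}.
\]

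Combining the last three displays yields $\BP(x,y\in A_\epsilon)\leq|A|^{-2(1-\epsilon)}\bigl(1+|A|^{-1/(2\mu(\Gamma_o))}\bigr)$ for every admissible pair, and summing over the at most $|A|^2$ ordered pairs $(x,y)$ with $x,y\in A$ gives the lemma. I do not anticipate a genuine obstacle here: this is a ``large-separation'' regime in which the two-point correlations are already negligible, so the only delicate issue is the bookkeeping of constants needed to land on the clean bound $1+|A|^{-1/(2\mu(\Gamma_o))}$ rather than $1+C|A|^{-1/(2\mu(\Gamma_o))}$ — which is precisely why one exploits the extra decay coming from $se^{-s/2}\to0$, and not merely from $se^{-s}$ being bounded.
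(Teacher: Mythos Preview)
Your proof is correct and follows essentially the same approach as the paper: both factor out $(G^{o,o})^{-2(1-\epsilon)u^*}=|A|^{-2(1-\epsilon)}$ via \eqref{eqn:u*Goo}, invoke Proposition~\ref{prop:Goxest1} and $G^{o,o}\geq 1$ to bound $(G^{o,x-y}/G^{o,o})^2\leq|A|^{-1/\mu(\Gamma_o)}$, linearize the logarithm, and then use $\mu(\Gamma_o)\lesssim\log\log|A|$ to absorb the $\log|A|$ factor before applying $e^y\leq 1+2y$. The only cosmetic difference is that you package the final estimate via the substitution $s=u^*$ and the observation $se^{-s/2}\to 0$, whereas the paper writes out the equivalent chain $2(\log|A|)\,|A|^{-1/\mu(\Gamma_o)}\leq |A|^{-2/(3\mu(\Gamma_o))}$ directly; both routes yield the same bound $1+|A|^{-1/(2\mu(\Gamma_o))}$.
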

\begin{proof}
Using \eqref{eqn:probxyuncov} we have that 
\begin{eqnarray} \label{eqn:Pxyeq}
\lefteqn{\BP(x,y \in A_\epsilon)
=\BP(o,y-x \in A_\epsilon)
=\left((G^{o,o})^2-(G^{o,y-x})^2\right)^{-(1-\epsilon)u^*}}\\
& & =(G^{o,o})^{-2(1-\epsilon)u^*}
\left(1-\left(\frac{G^{o,y-x}}{G^{o,o}}\right)^2\right)^{-(1-\epsilon)u^*}
=|A|^{-2(1-\epsilon)}
\left(1-\left(\frac{G^{o,y-x}}{G^{o,o}}\right)^2\right)^{-(1-\epsilon)u^*} \nonumber
\end{eqnarray}
where we used \eqref{eqn:u*Goo} in the last equality.
By \eqref{eqn:GoxGooratiolim}, $\frac{G^{o,y-x}}{G^{o,o}}\to 0$
as $|A|\to \infty$ since we are assuming that 
$|y-x|\geq |A|^{\frac{1}{\mu(\Gamma_o)}}\kappa^{-1/2}.$
Furthermore, $\log(1-u)\geq -2u$ whenever $0<u<1/2$ and so 
\begin{eqnarray} \label{eqn:sumest2}
\lefteqn{\left(1-\left(\frac{G^{o,y-x}}{G^{o,o}}\right)^2\right)^{-(1-\epsilon)u^*}}\\
& & =\exp\left(-(1-\epsilon)\frac{\log |A|}{\mu(\Gamma_o)}
\log\left(1-\left(\frac{G^{o,y-x}}{G^{o,o}}\right)^2\right)\right)
\nonumber \\
& & \leq \exp\left(2(1-\epsilon)\frac{\log |A|}{\mu(\Gamma_o)}
\left(\frac{G^{o,y-x}}{G^{o,o}}\right)^2\right). \nonumber
\end{eqnarray}
As before, we observe that it follows from \eqref{eqn:muGammaolowest} 
and the 
assumption that $\kappa^{-1}\geq e^9$ that both $\mu(\Gamma_o)\geq 1$
and $G^{o,o}\geq 1.$
We can now use Proposition \ref{prop:Goxest1} 
(which requires that $\kappa^{-1}\leq |A|$) to conclude that 
\begin{eqnarray} \label{eqn:sumest1}
\lefteqn{\exp\left(2(1-\epsilon)\frac{\log |A|}{\mu(\Gamma_o)}
\left(\frac{G^{o,y-x}}{G^{o,o}}\right)^2\right)
\leq 
\exp\left(2(\log |A|)
\left(G^{o,y-x}\right)^2\right)}\\
& & \leq 
\exp\left(2(\log |A|)|A|^{-\frac{1}{\mu(\Gamma_o)}}\right)
\leq \exp\left(|A|^{-\frac{2}{3\mu(\Gamma_o)}}\right)
\leq 1+|A|^{-\frac{1}{2\mu(\Gamma_o)}} \nonumber
\end{eqnarray}
where the penultimate inequality follows since 
\[
|A|^{\frac{1}{3\mu(\Gamma_o)}}
\geq |A|^{\frac{1}{3\log \log \kappa^{-1}}}
\geq |A|^{\frac{1}{3\log \log |A|}}
\geq 2(\log |A|)
\]
for large enough $|A|,$ and the last inequality follows 
since $e^x \leq 1+2x$ for small enough $x.$
Combining \eqref{eqn:Pxyeq},  \eqref{eqn:sumest2} and 
\eqref{eqn:sumest1} we see that
\begin{equation}\label{eqn:almostindependent}
\BP(x,y \in A_\epsilon)
\leq |A|^{-2(1-\epsilon)}\left(1+|A|^{-\frac{1}{2\mu(\Gamma_o)}}\right),
\end{equation}
and so 
\begin{eqnarray} 
\lefteqn{\sum_{x,y\in A: |A|^{\frac{1}{\mu(\Gamma_o)}}\kappa^{-1/2}
\leq |x-y|} 
\BP(x,y \in A_\epsilon)}\\
& & \leq \sum_{x,y\in A: 
|A|^{\frac{1}{\mu(\Gamma_o)}}\kappa^{-1/2}
\leq |x-y|}
|A|^{-2(1-\epsilon)}\left(1+|A|^{-\frac{1}{2\mu(\Gamma_o)}}\right) 
\nonumber\\
& &  \leq |A|^2|A|^{-2(1-\epsilon)}
\left(1+|A|^{-\frac{1}{2\mu(\Gamma_o)}}\right)
=|A|^{2\epsilon}\left(1+|A|^{-\frac{1}{2\mu(\Gamma_o)}}\right). \nonumber 
\end{eqnarray}
\end{proof}
\noindent
{\bf Remark:} It follows from equations \eqref{eqn:almostindependent} 
and \eqref{eqn:onepoint} that 
\[
\BP(x,y\in A_\epsilon)
\leq |A|^{-2(1-\epsilon)}+R
=\BP(x\in A_\epsilon)^2+R
\]
where $R$ is some small error term. 
Morally, this means that $o,x$ are ``almost''
independently covered. 
This is not surprising considering that they are separated by 
a distance
close to the diameter of a typical loop, i.e.\ $\kappa^{-1/2}$ 
(recall the discussion after the statement of Theorem 
\ref{thm:main} in the Introduction).

\medskip

We collect the above lemmas in the following proposition.
\begin{proposition} \label{prop:collect}
For any 
$\exp(e^{32})\leq \kappa^{-1}\leq |A|^{1-\frac{8}{\log \log |A|}},$ 
$0<\epsilon\leq \frac{1}{100 \mu(\Gamma_o)}$ and $|A|$ large enough
we have that 
\begin{equation} \label{eqn:propcollect1}
\sum_{x,y\in A: 0<|x-y|\leq |A|^{\frac{1}{\mu(\Gamma_o)}}\kappa^{-1/2}}
\BP(x,y\in A_\epsilon)\leq 
2|A|^{-\frac{1}{20\mu(\Gamma_o)}},
\end{equation}
and that 
\begin{equation} \label{eqn:propcollect2}
\sum_{x,y\in A: |x-y|>0}
\BP(x,y\in A_\epsilon)
\leq |A|^{2\epsilon}\left(1+3|A|^{-\frac{1}{20\mu(\Gamma_o)}}\right).
\end{equation}
\end{proposition}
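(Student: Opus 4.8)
The plan is to split the double sum over $\{x,y\in A:\,0<|x-y|\}$ according to the four ranges of the separation $|x-y|$ treated in Lemmas~\ref{lemma:doublesumsmalldist}, \ref{lemma:doublesummedest1}, \ref{lemma:doublesummedest2} and \ref{lemma:doublesumlargedist}, and then add the resulting bounds. First I would check that under the hypotheses of the proposition all four lemmas apply: the condition $\exp(e^{32})\le\kappa^{-1}\le|A|^{1-8/\log\log|A|}\le|A|$ implies each of $e^9\le\kappa^{-1}\le|A|$, $\exp(e^{32})\le\kappa^{-1}\le|A|$ and $\exp(e^{32})\le\kappa^{-1}\le|A|^{1-8/\log\log|A|}$, and by \eqref{eqn:muGammaolowest} together with $\kappa^{-1}\ge\exp(e^{32})$ one has $\mu(\Gamma_o)\ge30$, so that $0<\epsilon\le\tfrac{1}{100\mu(\Gamma_o)}$ forces in particular $\epsilon<1/2$. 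I would also record the two-sided bound $30\le\mu(\Gamma_o)\le\log\log|A|$, the upper estimate coming from \eqref{eqn:muGammaoest} (cf.~\eqref{eqn:mugammaosimplified}) together with $\kappa^{-1}\le|A|$; this is what makes the various ``$|A|$ large enough'' thresholds uniform in the admissible killing rate. Finally I would observe that the four ranges $1\le|x-y|\le(\kappa^{-1})^{1/(40\mu(\Gamma_o))}$, $(\kappa^{-1})^{1/(40\mu(\Gamma_o))}\le|x-y|\le\kappa^{-1/4}$, $\kappa^{-1/4}\le|x-y|\le|A|^{1/\mu(\Gamma_o)}\kappa^{-1/2}$ and $|A|^{1/\mu(\Gamma_o)}\kappa^{-1/2}\le|x-y|$ have correctly ordered endpoints (using $\mu(\Gamma_o)\ge1$ and $\kappa^{-1}\ge\exp(e^{32})$), hence cover $\{0<|x-y|\}$; since every summand $\BP(x,y\in A_\epsilon)$ is nonnegative, the overlap of the ranges at their common endpoints is harmless.

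For \eqref{eqn:propcollect1}, the first three ranges cover $\{x,y\in A:\,0<|x-y|\le|A|^{1/\mu(\Gamma_o)}\kappa^{-1/2}\}$, so adding the bounds \eqref{eqn:doublesumsmalldist}, \eqref{eqn:doublesummeddist1} and \eqref{eqn:doublesummeddist2} gives a total of at most $|A|^{-1/(20\mu(\Gamma_o))}+|A|^{-1/7}+|A|^{-1/\mu(\Gamma_o)}$. Since $\mu(\Gamma_o)\ge30$ one has $\tfrac{1}{20\mu(\Gamma_o)}-\tfrac17\le\tfrac{1}{600}-\tfrac17<0$, so $|A|^{-1/7}\le\tfrac12|A|^{-1/(20\mu(\Gamma_o))}$ for $|A|$ large; and $|A|^{-1/\mu(\Gamma_o)}=|A|^{-1/(20\mu(\Gamma_o))}\,|A|^{-19/(20\mu(\Gamma_o))}\le\tfrac12|A|^{-1/(20\mu(\Gamma_o))}$ for $|A|$ large, because $\mu(\Gamma_o)\le\log\log|A|$ gives $|A|^{-19/(20\mu(\Gamma_o))}\le|A|^{-19/(20\log\log|A|)}\to0$. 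Both reductions hold uniformly in $\kappa$, and summing yields the bound $2|A|^{-1/(20\mu(\Gamma_o))}$ of \eqref{eqn:propcollect1}.

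For \eqref{eqn:propcollect2}, all four ranges together cover $\{x,y\in A:\,0<|x-y|\}$, so by the previous paragraph together with \eqref{eqn:doublesumlargedist} the full sum is at most $2|A|^{-1/(20\mu(\Gamma_o))}+|A|^{2\epsilon}\bigl(1+|A|^{-1/(2\mu(\Gamma_o))}\bigr)$. Since $|A|>1$ and $\epsilon>0$ we have $|A|^{2\epsilon}\ge1$ and $|A|^{-1/(2\mu(\Gamma_o))}\le|A|^{-1/(20\mu(\Gamma_o))}$, hence $2|A|^{-1/(20\mu(\Gamma_o))}\le2|A|^{2\epsilon}|A|^{-1/(20\mu(\Gamma_o))}$, and the whole expression is at most $|A|^{2\epsilon}\bigl(1+3|A|^{-1/(20\mu(\Gamma_o))}\bigr)$, which is \eqref{eqn:propcollect2}.

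There is essentially no obstacle here: the proposition is a bookkeeping consolidation of Lemmas~\ref{lemma:doublesumsmalldist}--\ref{lemma:doublesumlargedist}. The only point deserving (minor) care is that the threshold ``$|A|$ large enough'' be chosen uniformly over the admissible $\kappa$, which is guaranteed by the uniform two-sided bound $30\le\mu(\Gamma_o)\le\log\log|A|$.
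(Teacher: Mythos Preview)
Your proposal is correct and follows essentially the same approach as the paper: split the range $0<|x-y|\le |A|^{1/\mu(\Gamma_o)}\kappa^{-1/2}$ into the three subranges of Lemmas~\ref{lemma:doublesumsmalldist}, \ref{lemma:doublesummedest1}, \ref{lemma:doublesummedest2}, add the three bounds and use $\mu(\Gamma_o)\ge30$ to absorb the two smaller terms into $2|A|^{-1/(20\mu(\Gamma_o))}$; then for \eqref{eqn:propcollect2} add \eqref{eqn:doublesumlargedist} and use $|A|^{2\epsilon}\ge1$ and $|A|^{-1/(2\mu(\Gamma_o))}\le|A|^{-1/(20\mu(\Gamma_o))}$. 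Your explicit remark on the uniformity of the ``$|A|$ large enough'' threshold via $30\le\mu(\Gamma_o)\le\log\log|A|$ is a welcome addition that the paper leaves implicit.
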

\begin{proof}
We start by considering \eqref{eqn:propcollect1}. Since we assume 
that $\exp(e^{32})\leq \kappa^{-1}\leq |A|^{1-\frac{8}{\log \log |A|}},$ we 
can use 
\eqref{eqn:doublesumsmalldist}, \eqref{eqn:doublesummeddist1} and 
\eqref{eqn:doublesummeddist2} to see that 
\begin{eqnarray*}
\lefteqn{\sum_{x,y\in A: 0<|x-y|\leq |A|^{\frac{1}{\mu(\Gamma_o)}}\kappa^{-1/2}}
\BP(x,y\in A_\epsilon)}\\
& & \leq 
\sum_{x,y\in A: 1\leq |x-y| 
\leq \left(\kappa^{-1}\right)^{\frac{1}{40 \mu(\Gamma_o)}}} 
\BP(x,y \in A_\epsilon)
+\sum_{x,y\in A: 
\left(\kappa^{-1}\right)^{\frac{1}{40 \mu(\Gamma_o)}}\leq |x-y|
\leq \kappa^{-1/4}} 
\BP(x,y \in A_\epsilon) \\
& & \hspace{20mm} 
+\sum_{x,y\in A: 
\kappa^{-1/4}\leq |x-y|
\leq |A|^{\frac{1}{\mu(\Gamma_o)}}\kappa^{-1/2}} 
\BP(x,y \in A_\epsilon) \\
& & \leq 
|A|^{-\frac{1}{20\mu(\Gamma_o)}}+|A|^{-1/7}
+|A|^{-\frac{1}{\mu(\Gamma_o)}}
\leq 2|A|^{-\frac{1}{20\mu(\Gamma_o)}}
\end{eqnarray*}
for all $|A|$ large enough (since $\mu(\Gamma_o)\geq 30$ by our 
assumption on $\kappa^{-1}$ and \eqref{eqn:muGammaolowest}).

The second statement follows by using \eqref{eqn:doublesumlargedist}
together with \eqref{eqn:propcollect1} and observing that 
\[
2|A|^{-\frac{1}{20\mu(\Gamma_o)}}
+|A|^{2\epsilon}\left(1+|A|^{-\frac{1}{2\mu(\Gamma_o)}}\right)
\leq |A|^{2\epsilon}\left(1+3|A|^{-\frac{1}{20\mu(\Gamma_o)}}\right)
\]
for $|A|$ large enough.
\end{proof}

\medskip

We shall now use Proposition \ref{prop:collect} to prove that 
the uncovered region at time $(1-\epsilon)u^*$ consists of 
a small collection of vertices
all separated by a large distance.
To that end, define, for $0<\epsilon<1$, 
\begin{eqnarray} \label{eqn:defHAeps}
\lefteqn{H_{A,\epsilon}:=\Big{\{}K \subset A: ||K|-|A|^\epsilon| 
\leq |A|^{3\epsilon/4},}\\
& & \hspace{10mm}
\textrm{ and } |x-y|\geq \kappa^{-1/2}|A|^{\frac{1}{\mu(\Gamma_o)}}
 \textrm{ for every distinct } x,y\in K \Big{\}}. \nonumber
\end{eqnarray}

\begin{proposition} \label{prop:HAeps}
For any 
$\exp(e^{32})<\kappa^{-1}\leq |A|^{1-\frac{8}{\log \log |A|}}$ and 
$0<\epsilon\leq \frac{1}{100 \mu(\Gamma_o)}$
we have that 
\[
\BP(A_\epsilon \not \in H_{A,\epsilon})
\leq 3|A|^{-\epsilon/2}
\]
for every $|A|$ large enough. 
\end{proposition}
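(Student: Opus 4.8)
The plan is to bound $\BP(A_\epsilon \notin H_{A,\epsilon})$ by splitting the bad event into two pieces: either $\bigl||A_\epsilon|-|A|^\epsilon\bigr| > |A|^{3\epsilon/4}$, or there exist two distinct vertices $x,y \in A_\epsilon$ with $|x-y| < \kappa^{-1/2}|A|^{1/\mu(\Gamma_o)}$. I would control the first piece by Chebyshev's inequality, using the first and second moment computations already available, and the second piece by a first-moment (union) bound built directly from Proposition \ref{prop:collect}.

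First I would recall from \eqref{eqn:expAeps} that $\BE[|A_\epsilon|] = |A|^\epsilon$, so the first bad event is $\bigl||A_\epsilon| - \BE[|A_\epsilon|]\bigr| > |A|^{3\epsilon/4}$. To apply Chebyshev I need $\mathrm{Var}(|A_\epsilon|)$. Writing $|A_\epsilon| = \sum_{x\in A}\mathbbm{1}\{x\in A_\epsilon\}$, I have
\[
\BE[|A_\epsilon|^2] = \sum_{x\in A}\BP(x\in A_\epsilon) + \sum_{x\ne y}\BP(x,y\in A_\epsilon)
= |A|^\epsilon + \sum_{x\ne y}\BP(x,y\in A_\epsilon),
\]
so that $\mathrm{Var}(|A_\epsilon|) = |A|^\epsilon + \sum_{x\ne y}\BP(x,y\in A_\epsilon) - |A|^{2\epsilon}$. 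By \eqref{eqn:propcollect2} the double sum is at most $|A|^{2\epsilon}\bigl(1 + 3|A|^{-1/(20\mu(\Gamma_o))}\bigr)$, hence
\[
\mathrm{Var}(|A_\epsilon|) \le |A|^\epsilon + 3|A|^{2\epsilon - \frac{1}{20\mu(\Gamma_o)}}.
\]
Chebyshev then gives $\BP\bigl(\bigl||A_\epsilon|-|A|^\epsilon\bigr| > |A|^{3\epsilon/4}\bigr) \le |A|^{-3\epsilon/2}\bigl(|A|^\epsilon + 3|A|^{2\epsilon-\frac{1}{20\mu(\Gamma_o)}}\bigr) = |A|^{-\epsilon/2} + 3|A|^{\epsilon/2 - \frac{1}{20\mu(\Gamma_o)}}$. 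Since $\epsilon \le \frac{1}{100\mu(\Gamma_o)}$, the exponent $\epsilon/2 - \frac{1}{20\mu(\Gamma_o)}$ is at most $-\frac{1}{25\mu(\Gamma_o)} \le -\epsilon/2$ (using $\epsilon \le \frac{1}{100\mu(\Gamma_o)}$ again, crudely), so this whole term is bounded, say, by $2|A|^{-\epsilon/2}$ for $|A|$ large. I would then tidy the constants so that the first piece contributes at most $2|A|^{-\epsilon/2}$.

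For the second piece, a union bound over ordered pairs gives
\[
\BP\bigl(\exists\, x\ne y \in A_\epsilon:\ |x-y| < \kappa^{-1/2}|A|^{\frac{1}{\mu(\Gamma_o)}}\bigr)
\le \sum_{\substack{x,y\in A\\ 0<|x-y|< |A|^{1/\mu(\Gamma_o)}\kappa^{-1/2}}} \BP(x,y\in A_\epsilon),
\]
and this is exactly the sum bounded by \eqref{eqn:propcollect1}, namely $2|A|^{-\frac{1}{20\mu(\Gamma_o)}}$. Since $\frac{1}{20\mu(\Gamma_o)} \ge \frac{20}{\mu(\Gamma_o)}\cdot\frac{1}{400} \ge 5\epsilon \ge \epsilon/2$ (using $\epsilon \le \frac{1}{100\mu(\Gamma_o)}$), this piece is at most $2|A|^{-\epsilon/2}$, and in fact much smaller, so combining the two pieces and absorbing constants into the large-$|A|$ hypothesis yields $\BP(A_\epsilon \notin H_{A,\epsilon}) \le 3|A|^{-\epsilon/2}$.

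The only mild subtlety — and the step I would double-check most carefully — is the bookkeeping of the exponents relating $\epsilon$, $\frac{1}{\mu(\Gamma_o)}$, and $\frac{1}{20\mu(\Gamma_o)}$, making sure that both $|A|^{\epsilon/2 - 1/(20\mu(\Gamma_o))}$ and $|A|^{-1/(20\mu(\Gamma_o))}$ are genuinely dominated by $|A|^{-\epsilon/2}$ given the constraint $\epsilon \le \frac{1}{100\mu(\Gamma_o)}$ and $\mu(\Gamma_o)\ge 30$; all of these hold with room to spare, so the constant $3$ on the right-hand side is comfortably achievable for $|A|$ large. There is no real analytic obstacle here: the heavy lifting has been done in Proposition \ref{prop:collect}, and what remains is a standard second-moment/union-bound packaging.
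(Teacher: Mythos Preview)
Your proposal is correct and follows essentially the same approach as the paper: you split $\{A_\epsilon \notin H_{A,\epsilon}\}$ into the size-deviation event and the close-pair event, handle the first by Chebyshev using \eqref{eqn:expAeps} and \eqref{eqn:propcollect2}, handle the second by a union bound using \eqref{eqn:propcollect1}, and then combine using $\epsilon \le \frac{1}{100\mu(\Gamma_o)}$ to compare exponents. The paper's proof is organized identically, with only cosmetic differences in how the exponent arithmetic is written out.
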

\noindent
\begin{proof}
We use \eqref{eqn:propcollect1} of Proposition \ref{prop:collect}
to see that 
\begin{eqnarray} \label{eqn:HAepsest1}
\lefteqn{\BP\left(\exists x,y\in A_{\epsilon}: 
0<|x-y|<\kappa^{-1/2}|A|^{\frac{1}{\mu(\Gamma_o)}}\right)}\\
& & 
\leq \sum_{x,y\in A: 0<|x-y|\leq 
\kappa^{-1/2}|A|^{\frac{1}{\mu(\Gamma_o)}}} 
\BP(x,y\in A_\epsilon) 
\leq 2|A|^{-\frac{1}{20\mu(\Gamma_o)}} \nonumber
\end{eqnarray}
for every $|A|$ large enough.
Next we observe that by \eqref{eqn:propcollect2} of 
Proposition \ref{prop:collect} we have that 
\begin{eqnarray*}
\lefteqn{\BE[|A_\epsilon|^2]
=\sum_{x,y\in A} \BP(x,y\in A_\epsilon)}\\
& & =\sum_{x\in A} \BP(x\in A_\epsilon)
+\sum_{x,y\in A: |x-y|>0} \BP(x,y\in A_\epsilon)
\leq |A|^\epsilon 
+|A|^{2\epsilon}\left(1+3|A|^{-\frac{1}{20\mu(\Gamma_o)}}\right).
\end{eqnarray*}
Recall \eqref{eqn:expAeps} which states that 
$\BE[|A_\epsilon|]=|A|^\epsilon,$ so that
$\BE[(|A_\epsilon|-|A|^{\epsilon})^2]
=\BE[|A_\epsilon|^2]-|A|^{2 \epsilon}.$ 
Therefore, by Chebyshev's inequality,
\begin{eqnarray*} 
\lefteqn{\BP\left(||A_\epsilon|-|A|^\epsilon| 
\geq |A|^{3\epsilon/4}\right)
\leq \frac{\BE[|A_\epsilon|^2]-|A|^{2 \epsilon}}
{|A|^{3\epsilon/2}}}\\
& &\leq \frac{|A|^\epsilon 
+|A|^{2\epsilon}\left(1+3|A|^{-\frac{1}{20\mu(\Gamma_o)}}\right)
-|A|^{2 \epsilon}}
{|A|^{3\epsilon/2}} \\
& &  =|A|^{-\epsilon/2}
+3|A|^{\epsilon/2}|A|^{-\frac{1}{20\mu(\Gamma_o)}} \\
& & \leq|A|^{-\epsilon/2}
+3|A|^{\epsilon/2}|A|^{-5 \epsilon}
\leq 2|A|^{-\epsilon/2}
\nonumber 
\end{eqnarray*}
for $|A|$ large enough by using our assumption that 
$\epsilon\leq \frac{1}{100\mu(\Gamma_o)}$ in the penultimate 
inequality. Thus,
\begin{equation} \label{eqn:HAepsest2}
\BP\left(||A_\epsilon|-|A|^\epsilon| 
\geq |A|^{3\epsilon/4}
\right)
\leq 2|A|^{-\epsilon/2}. 
\end{equation}
Combining \eqref{eqn:HAepsest1} and \eqref{eqn:HAepsest2} we then 
find that 
\[
\BP(A_\epsilon \not \in H_{A,\epsilon})
\leq 2|A|^{-\frac{1}{20\mu(\Gamma_o)}}
+2|A|^{-\epsilon/2}
\leq  3|A|^{-\epsilon/2},
\]
where we again used the upper bound on $\epsilon.$
\end{proof}

\section{Proof of main theorem} \label{sec:proofofmain}

We will now put all the pieces together.

\noindent
\begin{proof}[Proof of Theorem \ref{thm:main}.]
In this proof we will fix $\epsilon$ to be 
equal to $\frac{1}{100 \mu(\Gamma_o)},$ but we shall apply 
Proposition \ref{prop:HAeps} to $\epsilon$ and $\epsilon/4.$
Our aim is to establish that 
\begin{equation}\label{eqn:basicineq2}
\sup_{z \in \BR}|\BP(\mu(\Gamma_o)\CT(A)-\log |A|\leq z)
-\exp(-e^{-z})|
\leq 
12 |A|^{-\epsilon/8}
=12|A|^{-\frac{1}{800 \mu(\Gamma_o)}}
\end{equation}
for every large enough finite set $A\subset \BZ^2.$

\medskip

We will divide the proof of \eqref{eqn:basicineq2} into three cases, depending on the value of $z.$

\noindent
{\bf Case 1:} Here, $z \leq -\frac{\epsilon}{4} \log |A|$. 
Then
\begin{eqnarray*}
\lefteqn{\BP(\mu(\Gamma_o)\CT(A)-\log |A|\leq z)
=\BP\left(\CT(A) \leq u^* + \frac{z}{\mu(\Gamma_o)}\right) }\\
&& \leq \BP\left(\CT(A)\leq  \left(u^*- \frac{\epsilon}{4} u^*\right)\right)
=\BP\left(\left\{x \in A : \CT(x) > \left(1 - \frac{\epsilon}{4} \right)u^*\right\} 
= \emptyset \right) 
= \BP(A_{\epsilon/4} = \emptyset),
\end{eqnarray*}
by the definition of $A_\epsilon$ in \eqref{eqn:defAeps}. Moreover, 
$\BP(A_{\epsilon/4} = \emptyset) 
\leq \BP(A_{\epsilon/4}\notin H_{A,\epsilon/4})$, 
since $H_{A,\epsilon/4}$ does not contain the empty set.
It then follows from Proposition \ref{prop:HAeps} that for $|A|$ 
large enough,
\[
\BP(\mu(\Gamma_o)\CT(A)-\log |A|\leq z)
\leq \BP(A_{\epsilon/4} \notin H_{A,\epsilon/4})
 \leq 3|A|^{-\epsilon/8}.
\]
Next, using that $z\leq  -\frac{\epsilon}{4} \log |A|$ it follows that 
$e^{-z}\geq |A|^{\epsilon/4},$ and so, for all $|A|$ large enough, 
\[
\exp(-e^{-z}) \leq \exp\left(-|A|^{\epsilon/4}\right)
\leq |A|^{-\epsilon/4}.
\]
Therefore, 
\begin{eqnarray}
\lefteqn{|\BP(\mu(\Gamma_o)\CT(A)-\log |A|\leq z) -\exp(-e^{-z})|}\\
& & \leq \BP(\mu(\Gamma_o)\CT(A)-\log |A|\leq z) + \exp(-e^{-z})
\leq 3|A|^{-\epsilon/8}+|A|^{-\epsilon/4}
\leq 4|A|^{-\epsilon/8},
\nonumber
\end{eqnarray}
and so for all $|A|$ large enough, 
\eqref{eqn:basicineq2} is satisfied in this case.

\medskip
\noindent
{\bf Case 2:} Assume now instead that 
$z \geq \log |A|$. Then,
\begin{eqnarray*}
\lefteqn{\BP(\mu(\Gamma_o)\CT(A)-\log |A|>z)
=\BP\left(\CT(A) > u^* + \frac{z}{\mu(\Gamma_o)}\right)} \\
& & \leq  \BP(\CT(A) > 2u^*)
= \BP\left(\bigcup_{x \in A} 
 \left\{\CT(x)>2u^*\right\} \right)
  \\
 & & \leq |A|\BP(\CT(o) > 2u^*)
 = |A|\exp(-2u^* \mu(\Gamma_o)) = |A|^{-1}.
\end{eqnarray*}
Then, since $z \geq  \log |A|$, we have that 
$e^{-z}\leq e^{-\log |A|}=|A|^{-1}$ and so 
\[
\exp(-e^{-z}) \geq \exp(-|A|^{-1}) \geq 1-|A|^{-1},
\] 
since $e^x \geq 1 + x$ for every $x$. 
This, and the above equation gives
\begin{eqnarray} \label{eq:case2bound}
\lefteqn{|\BP(\mu(\Gamma_o)\CT(A)-\log |A| \leq z) -\exp(-e^{-z})|}\\
&&=|1-\BP(\mu(\Gamma_o)\CT(A)-\log |A|>z) -\exp(-e^{-z})| \nonumber\\
&&\leq \BP(\mu(\Gamma_o)(\CT(A)-\log |A|)>z) + |1 - \exp(- e^{-z})| 
\leq |A|^{-1} + |A|^{-1} , \nonumber
\end{eqnarray}
and so \eqref{eqn:basicineq2} holds also in this case.

\medskip
\noindent
{\bf Case 3:} Assume that 
$z \in \left(-\frac{\epsilon}{4} \log |A|,\log |A|\right)$ 
and start by observing that
\begin{eqnarray} \label{eq:firstterm}
\lefteqn{|\BP(\mu(\Gamma_o)\CT(A)-\log |A|\leq z) 
- \exp(-e^{-z})|} \\
& & \leq |\BP(\mu(\Gamma_o)\CT(A)-\log |A|\leq z) 
- \BP(\mu(\Gamma_o)\CT(A)-\log |A|\leq z, A_\epsilon \in H_{A,\epsilon})| \nonumber \\
& & \hspace{4mm} + |\exp(-e^{-z})\BP(A_\epsilon \in H_{A,\epsilon})
-\exp(-e^{-z})| \nonumber \\
& & \hspace{4mm}
+|\BP(\mu(\Gamma_o)\CT(A)-\log |A|\leq z, A_\epsilon \in H_{A,\epsilon})
-\exp(-e^{-z})\BP(A_\epsilon \in H_{A,\epsilon})| . \nonumber
\end{eqnarray}
We will now consider the three terms on the right hand side separately.

For the first term we note that  
\begin{eqnarray} \label{eqn:term1}
\lefteqn{|\BP(\mu(\Gamma_o)\CT(A)-\log |A|\leq z) 
-\BP(\mu(\Gamma_o)\CT(A)-\log |A|\leq z,A_\epsilon \in H_{A,\epsilon})|}
\\
&& = \BP(\mu(\Gamma_o)\CT(A)-\log |A|\leq z, 
A_\epsilon \not \in H_{A,\epsilon}) 
 \leq \BP(A_\epsilon \notin H_{A,\epsilon}) 
\leq 3|A|^{-\epsilon/2}\nonumber
\end{eqnarray}
by Proposition \ref{prop:HAeps}.
Similarly, for the second term of the right hand side of \eqref{eq:firstterm}, 
we observe that 
\begin{equation} \label{eqn:term2}
|\exp(-e^{-z})\BP(A_\epsilon \in H_{A,\epsilon})-\exp(-e^{-z})|
=\exp(-e^{-z}) \BP(A_\epsilon \notin H_{A,\epsilon}) 
\leq  3|A|^{-\epsilon/2}
\end{equation}
again by Proposition \ref{prop:HAeps}.

Consider now the third and final term of \eqref{eq:firstterm}. 
Let $K \in H_{A,\epsilon}$. We will show below that
\begin{equation} \label{eqn:Tsrhoprel}
|\BP(\mu(\Gamma_o)\CT(A)-\log |A|\leq z | A_\epsilon=K)
-\exp(-e^{-z})| \leq 5|A|^{-\epsilon/4}.
\end{equation}
After multiplication by $\BP(A_\epsilon = K)$ and summation over all 
$K \in H_{A,\epsilon}$ we then obtain
\begin{equation}\label{eqn:term3}
|\BP(\mu(\Gamma_o)\CT(A)-\log |A|\leq z, A_\epsilon \in H_{A,\epsilon})
-\exp(-e^{-z})\BP(A_\epsilon \in H_{A,\epsilon})| 
\leq 5|A|^{-\epsilon/4}.
\end{equation}
Summing the contributions from \eqref{eqn:term1}, \eqref{eqn:term2}
and \eqref{eqn:term3} 
we conclude from \eqref{eq:firstterm} that 
\begin{equation}\label{eqn:case3bound}
|\BP(\mu(\Gamma_o)(\CT(A)-\log |A|)\leq z)
-\exp(-e^{-z})|
 \leq 6|A|^{-\epsilon/2}+5|A|^{-\epsilon/4}\leq 12|A|^{-\epsilon/4}
\end{equation}
for all $z \in (-\epsilon \log |A|, \log |A|)$ and $|A|$ large enough. 
It may be worth recalling that, from the start of the proof, we
assume that $\epsilon=\frac{1}{100\mu(\Gamma_o)}.$
This then proves \eqref{eqn:basicineq2} and completes the proof, modulo \eqref{eqn:Tsrhoprel}.

In order to prove \eqref{eqn:Tsrhoprel} we consider the conditional probability
\[
\BP(\mu(\Gamma_o)\CT(A)-\log |A|\leq z | A_\epsilon=K)
=\frac{\BP\left(\CT(A)\leq u^*+\frac{z}{\mu(\Gamma_o)}, A_\epsilon=K\right)}
{\BP(A_\epsilon=K)}.
\] 
Let $\omega_{u_1,u_2}$ denote the loops arriving between times 
$u_1$ and $u_2$ where $u_1<u_2.$ On the event that $A_\epsilon=K$ 
it must be that 
$K$ is covered by the loops arriving between times $(1-\epsilon)u^*$ and 
$u^*+\frac{z}{\mu(\Gamma_o)}$ for the event 
$\CT(A)\leq u^*+\frac{z}{\mu(\Gamma_o)}$ to also occur. 
Therefore,
\begin{eqnarray*}
\lefteqn{\BP\left(\CT(A)\leq u^*+\frac{z}{\mu(\Gamma_o)}, 
A_\epsilon=K\right)
 =\BP\left((1-\epsilon)u^*\leq 
\CT(K)\leq u^*+\frac{z}{\mu(\Gamma_o)}, A_\epsilon=K\right)}\\
& & =\BP\left(K\subset 
\bigcup_{\gamma\in \omega_{(1-\epsilon)u^*,u^*+\frac{z}{\mu(\Gamma_o)}}} \gamma, A_\epsilon=K\right)
=\BP\left(K\subset 
\bigcup_{\gamma\in \omega_{(1-\epsilon)u^*,u^*+\frac{z}{\mu(\Gamma_o)}}} \gamma\right)
\BP( A_\epsilon=K)\\
& & =\BP\left(\CT(K)\leq \epsilon u^*+\frac{z}{\mu(\Gamma_o)}\right)
\BP( A_\epsilon=K),
\end{eqnarray*}
where the last equality follows from the Poissonian nature of the 
loop process, which implies that the distribution of 
the loops that fall between times $(1-\epsilon)u^*$ and 
$u^*+\frac{z}{\mu(\Gamma_o)}$
is simply a Poissonian loop process with intensity 
$u^*+\frac{z}{\mu(\Gamma_o)}-(1-\epsilon)u^*
=\epsilon u^*+\frac{z}{\mu(\Gamma_o)}.$

We therefore see that 
\begin{equation} \label{eq:firstbound} 
\BP\left(\CT(A)\leq u^*+\frac{z}{\mu(\Gamma_o)} \Big{|} A_\epsilon = K\right)
= \BP\left(\CT(K) \leq \epsilon u^*+\frac{z}{\mu(\Gamma_o)}\right),
\end{equation}
and using \eqref{eq:firstbound} we have
\begin{eqnarray} \label{eqn:Tsrhoprel2}
\lefteqn{\left|\BP\left(\CT(A)\leq u^*+\frac{z}{\mu(\Gamma_o)} 
\Big{|} A_\epsilon = K\right)  
- \exp(-e^{-z})\right|} \\
&&= \left|\BP\left(\CT(K) \leq \epsilon u^*+\frac{z}{\mu(\Gamma_o)}\right)
-\exp(-e^{-z})\right| \nonumber \\
&&\leq \left|\BP\left(\CT(K) \leq \epsilon u^*+\frac{z}{\mu(\Gamma_o)}\right)
 -\BP\left(\CT(o) \leq \epsilon u^*+\frac{z}{\mu(\Gamma_o)}\right)^{|K|} \right|
\nonumber \\
&& \hspace{4mm} 
+\left|\BP\left(\CT(o) \leq \epsilon u^*+\frac{z}{\mu(\Gamma_o)}\right)^{|K|}
-\exp(-e^{-z})\right|. \nonumber 
\end{eqnarray}
We will deal with the two terms on the right hand side of 
\eqref{eqn:Tsrhoprel2} separately.

For the first term, we will use Proposition \ref{prop:smalldistantK}.
Let therefore $x,y\in K$ be distinct. By the definition of $H_{A,\epsilon}$ in \eqref{eqn:defHAeps},
we have that, if $x,y \in K$ and $K \in H_{A,\epsilon}$, then
$|x-y|\geq \kappa^{-1/2}|A|^{\frac{1}{\mu(\Gamma_o)}}.$
Furthermore, if $K \in H_{A,\epsilon}$, then  
$||K|-|A|^\epsilon| \leq |A|^{3\epsilon/4},$
and so we have that  
\begin{equation}\label{eq:intermediate1}
|A|^\epsilon-|A|^{3\epsilon/4}
\leq |K| \leq |A|^\epsilon+|A|^{3\epsilon/4}
\end{equation}
and in particular, \eqref{eq:intermediate1} implies that 
$|K|\leq 2|A|^\epsilon.$

We now want to apply Proposition \ref{prop:smalldistantK}, and to 
that end we note that by \eqref{eqn:muGammaoest} we have that
$\mu(\Gamma_o)\leq \log \log \kappa^{-1}\leq \log \log |A|.$
Therefore, for every 
$z\in \left(-\frac{\epsilon}{4}\log |A|, \log |A|\right)$ we have that
\begin{eqnarray*}
\lefteqn{\epsilon u^*+\frac{z}{\mu(\Gamma_o)}
\geq \epsilon \frac{\log |A|}{\mu(\Gamma_o)}-
\epsilon\frac{\log |A|}{4\mu(\Gamma_o)}
=\epsilon\frac{3\log |A|}{4\mu(\Gamma_o)}}\\
& & =\frac{3}{400} \frac{\log |A|}{(\mu(\Gamma_o))^2}
\geq \frac{3}{400} \frac{\log |A|}{(\log \log |A|)^2}\geq 1,
\end{eqnarray*}
whenever $|A|$ is large enough. We can therefore use
Proposition \ref{prop:smalldistantK} together with
$\vert K \vert \leq 2 \vert A \vert^{\epsilon}$ (which follows from \eqref{eq:intermediate1}),
$\epsilon \leq 1$, and the fact that 
$\frac{z}{\mu(\Gamma_o)}\leq \frac{\log |A|}{\mu(\Gamma_o)}
=u^*$ (this is the only place where we use the upper bound on $z$),
to see that 
\begin{eqnarray*}
\lefteqn{\left|\BP\left(\CT(K) \leq \epsilon u^*+\frac{z}{\mu(\Gamma_o)}\right)
 -\BP\left(\CT(o) \leq \epsilon u^*+\frac{z}{\mu(\Gamma_o)}\right)^{|K|} \right|}\\
& & \leq 2|K|^2 \left(\epsilon u^*+\frac{z}{\mu(\Gamma_o)}\right)
|A|^{-\frac{1}{\mu(\Gamma_o)}}
\leq 16|A|^{2\epsilon}u^*
|A|^{-\frac{1}{\mu(\Gamma_o)}}.
\end{eqnarray*}
Next, observe that for any $\kappa^{-1}\leq |A|$ we can use 
\eqref{eqn:mugammaosimplified} to see that
\[
|A|^{\frac{1}{2\mu(\Gamma_o)}}
\geq |A|^{\frac{1}{2\log \log \kappa^{-1}}}
\geq |A|^{\frac{1}{2\log \log |A|}}
\geq \log |A|,
\]
for any $|A|$ large enough. Therefore, we see that since 
$\mu(\Gamma_o)\geq 1,$
\[
16|A|^{2\epsilon}u^*|A|^{-\frac{1}{\mu(\Gamma_o)}}
= 16|A|^{2\epsilon}\frac{\log |A|}{\mu(\Gamma_o)}
|A|^{-\frac{1}{\mu(\Gamma_o)}}
\leq 16|A|^{2\epsilon}|A|^{-\frac{1}{2\mu(\Gamma_o)}} \nonumber
\leq |A|^{-\epsilon},
\]
for $|A|$ large enough by using the assumption on $\epsilon.$
We therefore conclude that 
\begin{equation} \label{eqn:independentest}
\left|\BP\left(\CT(K) \leq \epsilon u^*+\frac{z}{\mu(\Gamma_o)}\right)
 -\BP\left(\CT(o) \leq \epsilon u^*+\frac{z}{\mu(\Gamma_o)}\right)^{|K|} \right|
 \leq |A|^{-\epsilon}
\end{equation}
for $|A|$ large enough.

We can now turn to the second term of the right hand side of
\eqref{eqn:Tsrhoprel2}. As before,
$\exp\left(-\epsilon u^*\right)
=\exp\left(-\epsilon \frac{\log |A|}{\mu(\Gamma_o)}\right)$ and so
we have that 
\begin{eqnarray*}
\lefteqn{
\BP\left(\CT(o) \leq \epsilon u^*+\frac{z}{\mu(\Gamma_o)}\right)^{|K|}}\\
& &  = \left(1 - \exp\left(\left(-\epsilon u^*-\frac{z}{\mu(\Gamma_o)}\right)
\mu(\Gamma_o)\right)\right)^{|K|} = \left(1 
- \frac{e^{-z}}{|A|^\epsilon}\right)^{|K|}.
\end{eqnarray*}
Then by \eqref{eq:intermediate1}, 
\begin{equation}\label{eq:thirdbound}
\left(1- \frac{e^{-z}}{|A|^\epsilon}\right)
^{|A|^\epsilon+|A|^{3\epsilon/4}}
\leq 
\BP\left(\CT(o) \leq \epsilon u^*+\frac{z}{\mu(\Gamma_o)}\right)^{|K|} 
\leq \left(1-\frac{e^{-z}}{|A|^\epsilon}\right)
^{|A|^\epsilon-|A|^{3\epsilon/4}}.
\end{equation}
Therefore, using that $\log (1-x)\geq -x-x^2 $ 
for every $0<x<1/2,$ we get that 
\begin{eqnarray} \label{eqn:expPest1}
\lefteqn{\exp(-e^{-z})-\BP\left(\CT(o) \leq \epsilon u^*+\frac{z}{\mu(\Gamma_o)}\right)^{|K|}}\\
& & \leq \exp(-e^{-z})-
\left(1-\frac{e^{-z}}{|A|^\epsilon}\right)
^{|A|^\epsilon+|A|^{3\epsilon/4}}
\nonumber\\
& & =\exp(-e^{-z})
-\exp\left(\left(|A|^\epsilon+|A|^{3\epsilon/4}\right)
\log\left(1-\frac{e^{-z}}{|A|^\epsilon}\right)\right) 
\nonumber\\
& & \leq \exp(-e^{-z})
-\exp\left(\left(|A|^\epsilon+|A|^{3\epsilon/4}\right)
\left(-\frac{e^{-z}}{|A|^\epsilon}-\frac{e^{-2z}}{|A|^{2\epsilon}}
\right) \right) 
\nonumber\\
& & = \exp(-e^{-z})
-\exp\left(-e^{-z}-e^{-z}|A|^{-\epsilon/4}
-e^{-2z}|A|^{-\epsilon}\left(1 + |A|^{-\epsilon/4}\right)\right) 
\nonumber\\
& & =\exp(-e^{-z})\left(1-\exp\left(-e^{-z}|A|^{-\epsilon/4}
-e^{-2z}|A|^{-\epsilon}\left(1 + |A|^{-\epsilon/4}\right)\right)\right)
\nonumber\\
& & \leq \exp(-e^{-z})\left(e^{-z}|A|^{-\epsilon/4} 
+e^{-2z}|A|^{-\epsilon}\left(1 + |A|^{-\epsilon/4}\right)\right),
\nonumber
\end{eqnarray}
where we used that $1-e^{-x}\leq x$ for $x>0$ in the last inequality.
It is easy to check that $ye^{-y}\leq 1$ for every $y,$ and so 
\begin{equation} \label{eqn:expPest2}
\exp(-e^{-z})e^{-z}|A|^{-\epsilon/4}
\leq |A|^{-\epsilon/4}.
\end{equation}
Furthermore, 
\begin{equation} \label{eqn:expPest3}
\exp(-e^{-z})\left(
e^{-2z}|A|^{-\epsilon}\left(1 + |A|^{-\epsilon/4}\right)\right)
\leq 2e^{-2z}|A|^{-\epsilon}\leq 2|A|^{-\epsilon/2},
\end{equation}
since $z\geq -\frac{\epsilon}{4}\log |A|.$ 
Combining 
\eqref{eqn:expPest1}, \eqref{eqn:expPest2} and \eqref{eqn:expPest3} 
we obtain 
\begin{equation}\label{eqn:expPest4}
\exp(-e^{-z})
-\BP\left(\CT(o) \leq \epsilon u^*+\frac{z}{\mu(\Gamma_o)}\right)^{|K|}
\leq |A|^{-\epsilon/4}
+2|A|^{-\epsilon/2}
\leq 3|A|^{-\epsilon/4}.
\end{equation}
Similarly, we have that  $\log(1-x)\leq -x$ 
for every $0<x<1/2$ and therefore
\begin{eqnarray} \label{eqn:expPest5}
\lefteqn{\exp(-e^{-z})-\BP\left(\CT(o) \leq \epsilon u^*+\frac{z}{\mu(\Gamma_o)}\right)^{|K|}}\\
& & \geq \exp(-e^{-z})-\left(1-\frac{e^{-z}}{|A|^\epsilon}\right)
^{|A|^\epsilon\left(1-
|A|^{-\epsilon/4}\right)} \nonumber \\
& & =\exp(-e^{-z})
-\exp\left(|A|^\epsilon\left(1 - 
|A|^{-\epsilon/4}\right)
\log\left(1-\frac{e^{-z}}{|A|^\epsilon}\right)\right) 
\nonumber\\
& & \geq \exp(-e^{-z})
-\exp\left(|A|^\epsilon\left(1 - |A|^{-\epsilon/4}\right)
\left(-\frac{e^{-z}}{|A|^\epsilon}\right) \right) 
\nonumber\\
& & = \exp(-e^{-z})
-\exp\left(-e^{-z}\left(1 - |A|^{-\epsilon/4}\right) \right).
\nonumber
\end{eqnarray}
It is easy to check that the function $f(x)=x-x^{1-\beta}$ where 
$0<\beta\leq 1,$ is minimized when $x=(1-\beta)^{1/\beta}$ so that 
\[
f(x)\geq (1-\beta)^{1/\beta}-(1-\beta)^{1/\beta-1}
= -\frac{\beta}{1-\beta}(1-\beta)^{1/\beta}
\geq -\beta,
\]
since $(1-\beta)^{1/\beta}\leq 1-\beta$ for $0<\beta<1.$
We therefore obtain from \eqref{eqn:expPest5} that  
\begin{eqnarray} \label{eqn:expPest6}
\lefteqn{\exp(-e^{-z})-\BP\left(\CT(o) \leq \epsilon u^*+\frac{z}{\mu(\Gamma_o)}\right)^{|K|}}\\
& & \geq \exp(-e^{-z})
-\exp\left(-e^{-z}\left(1 - |A|^{-\epsilon/4}\right) \right) 
\geq -|A|^{-\epsilon/4}. \nonumber
\end{eqnarray}
Combining \eqref{eqn:expPest4} and \eqref{eqn:expPest6} we see that 
\[
\left|\BP\left(\CT(o) \leq \epsilon u^*+\frac{z}{\mu(\Gamma_o)}\right)^{|K|} -\exp(-e^{-z})\right|
\leq 3|A|^{-\epsilon/4}
+|A|^{-\epsilon/4}
=4|A|^{-\epsilon/4}.
\]
Using this and \eqref{eqn:independentest} in 
\eqref{eqn:Tsrhoprel2} proves that 
\[
\left|\BP\left(\CT(A)\leq u^*+\frac{z}{\mu(\Gamma_o)} 
\Big{|} A_\epsilon = K\right)  
- \exp(-e^{-z})\right|
\leq |A|^{-\epsilon}
+4|A|^{-\epsilon/4}
\leq 5|A|^{-\epsilon/4}
\]
proving \eqref{eqn:Tsrhoprel}.
This completes the proof.
\end{proof}

\section{Examples and discussion} \label{sec:examples}

Our main result, Theorem \ref{thm:main}, is geared to work in the 
worst case possible, i.e. when all the points of $A$ are grouped 
close together, and in order to prove Theorem \ref{thm:main} we had
to assume an upper bound on $\kappa_n^{-1},$ i.e. 
that $\kappa_n^{-1}\leq |A_n|^{1-8/(\log \log |A_n|)}.$ As alluded 
to in 
the introduction, we believe that the distribution of the cover time 
may undergo a sort of phase transition as $\kappa_n^{-1}$ increases even
further. In order to indicate this, we will here consider two simple
examples. In the first one we consider the cover time of two widely
separated points, while in the second we consider two almost 
neighboring points. However, we start by observing that 
for $A=\{x\},$ we clearly have from \eqref{eqn:probGreen} that 
\begin{equation}\label{eqn:onepointdistribution}
\BP(\mu(\Gamma_o) \CT(x)\leq u)
=1-\BP(\CT(x)\geq u/\mu(\Gamma_o))
=1-\left(G^{o,o}\right)^{-\frac{u}{\mu(\Gamma_o)}}
=1-e^{-u},
\end{equation} 
where we used \eqref{eqn:mulogG} in the last equality. Therefore, 
$\mu(\Gamma_o) \CT(x)$ is always an exponentially distributed random 
variable with parameter one.

\begin{example} \label{ex:twopointwidesep}
\end{example}
\noindent
Consider a set with two points, say $A=\{o,x\}$ and assume for 
convenience that $|x|$ is even. Then, we start 
by noticing that 
\begin{eqnarray} \label{eqn:Ex1prel}
\lefteqn{\BP(\CT(o,x)\leq u)=\BP(\CT(o)\leq u,\CT(x)\leq u)}\\
& &=2\BP(\CT(o)\leq u)-\BP(\CT(o)\leq u \cup \CT(x)\leq u) 
\nonumber\\
& &  =2-2\BP(\CT(o)\geq u)
-\left(1-\BP(\CT(o)\geq u, \CT(x)\geq u)\right) \nonumber\\
& & =1-2\BP(\CT(o)\geq u)+\BP(\CT(o)\geq u, \CT(x)\geq u) \nonumber\\
& & =1-2\BP(x \cap \CC_u=\emptyset)
+\BP(\{o,x\} \cap \CC_u=\emptyset). \nonumber
\end{eqnarray}
Using this, and \eqref{eqn:probxyuncov} we then see that 
\begin{eqnarray} \label{eqn:Ex1prel1}
\lefteqn{\BP(\CT(o,x)\leq u)-\BP(\CT(o)\leq u)^2}\\
& & =1-2\BP(x \cap \CC_u= \emptyset)
+\BP(\{o,x\} \cap \CC_u= \emptyset)
-(1-\BP(x \cap \CC_u= \emptyset))^2 \nonumber \\
& & =\BP(\{o,x\} \cap \CC_u= \emptyset)
-\BP(x \cap \CC_u= \emptyset)^2\nonumber \\
& & =\BP(x \cap \CC_u= \emptyset)^2
\left(\left(1-\left(\frac{G^{o,x}}{G^{o,o}}\right)^2\right)^{-u}-1\right). \nonumber
\end{eqnarray}
Next, we trivially have that 
\[
\left(1-\left(\frac{G^{o,x}}{G^{o,o}}\right)^2\right)^{-u}
\geq 1,
\]
and so it follows from \eqref{eqn:Ex1prel1} that 
\begin{equation} \label{eqn:Ex1prel2}
\BP(\CT(o,x)\leq u)-\BP(\CT(o)\leq u)^2 \geq 0.
\end{equation}
In order to bound the expression in \eqref{eqn:Ex1prel1} from above, 
we assume now that $|x|\geq 10 \kappa^{-2}$.
We then have that (since $|x|$ is assumed to be even)
\begin{eqnarray*}
\lefteqn{G^{o,x}
=\sum_{n=|x|}^{\infty}\left(\frac{1}{4+\kappa}\right)^{n} W_n^{o,x}
=\sum_{n=|x|/2}^{\infty}\left(\frac{1}{4+\kappa}\right)^{2n} 
W_{2n}^{o,x}}\\
& & \leq \sum_{n=|x|/2}^{\infty}\left(\frac{1}{4+\kappa}\right)^{2n} 
W_{2n}^{o,o}
\leq 4e^{-(|x|/2-1)\kappa/4}
\leq 4e^{-(10\kappa^{-2}/2-1)\kappa/4}
\leq 4e^{-\kappa^{-1}}.
\end{eqnarray*}
where we used Lemma \ref{lemma:WxWo} in the first inequality and 
\eqref{eqn:Wootailest_alt} in the second. Furthermore, 
as in the proof of \eqref{eqn:sumest2}, we see that 
\begin{eqnarray} \label{eqn:Ex1second}
\lefteqn{
\left(1-\left(\frac{G^{o,x}}{G^{o,o}}\right)^2\right)^{-u}}\\
& & \leq \left(1-\left(\frac{4 e^{-\kappa^{-1}}}{G^{o,o}}\right)^2\right)^{-u}
=\exp\left(-u\log \left(1-\left(\frac{4 e^{-\kappa^{-1}}}{G^{o,o}}\right)^2\right)\right) 
\nonumber\\
& &\leq \exp\left(2u\left(\frac{4 e^{-\kappa^{-1}}}{G^{o,o}}\right)^2\right)
\leq \exp\left(32ue^{-2\kappa^{-1}}\right)
\leq 1+32ue^{-2\kappa^{-1}} \nonumber
\end{eqnarray}
for $\kappa^{-1}$ large enough and since $G^{o,o}\geq 1.$
Combining \eqref{eqn:Ex1prel1} with \eqref{eqn:Ex1second} we conclude that 
\begin{equation} \label{eqn:Ex1prel3}
\BP(\CT(o,x)\leq u)-\BP(\CT(o)\leq u)^2
\leq \BP(x \cap \CC_u= \emptyset)^2 32ue^{-2\kappa^{-1}}.
\end{equation}
By fixing $u$ and letting $\kappa^{-1}\to \infty,$ we therefore 
see that (by using \eqref{eqn:muGammaolowest} and 
\eqref{eqn:probGreenbasic})
\begin{eqnarray} \label{eqn:Ex1final}
\lefteqn{|\BP(\mu(\Gamma_o)\CT(o,x)\leq u)
-\BP(\mu(\Gamma_o)\CT(o)\leq u)^2|}\\
& & \leq \BP(x \cap \CC_{\frac{u}{\mu(\Gamma_o)}}= \emptyset)^2
32\frac{u}{\mu(\Gamma_o)}e^{-2\kappa^{-1}}
=32ue^{-2u}\frac{e^{-2\kappa^{-1}}}{\mu(\Gamma_o)} 
 \nonumber\\
& & \leq 
32ue^{-2u}\frac{e^{-2\kappa^{-1}}}{\log \log \kappa^{-1}-\log \pi}
\to 0, \nonumber
\end{eqnarray}
as $\kappa^{-1}\to \infty.$ Thus, the re-scaled cover time 
$\CT(o,x)$ behaves asymptotically like the cover time of two independent 
exponentially distributed random variables with parameter one.
In light of the great distance between
$o$ and $x,$ this is not surprising.

\medskip

In our next example, we will consider two points which are close. 
We will need the following lemma, whose proof is similar to that of Lemma \ref{lemma:partialsumlowest} and is deferred to the appendix.

\begin{lemma} \label{lemma:Go11lowest}
We have that for any $\kappa>0,$ 
\begin{equation}
G^{o,(1,1)}
\geq \frac{\log \kappa^{-1}}{\pi}-1.
\end{equation}
\end{lemma}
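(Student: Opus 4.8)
The plan is to reduce the desired lower bound on $G^{o,(1,1)}$ to the already-proved lower bound on $G^{o,o}$ in \eqref{eqn:Goolowest}, by exploiting an exact identity between $W_{2n}^{o,(1,1)}$ and $W_{2n}^{o,o}$. The starting point is the classical observation (the same walk-counting input that underlies the proof of Lemma \ref{lemma:partialsumlowest} in Appendix \ref{app:walks}) that, under the $45^\circ$ change of coordinates $(x_1,x_2)\mapsto(x_1+x_2,\,x_1-x_2)$, the simple random walk on $\BZ^2$ splits into two independent one-dimensional simple random walks; here $o$ maps to $(0,0)$ and $(1,1)$ maps to $(2,0)$. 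This gives
\[
W_{2n}^{o,o}=\binom{2n}{n}^2,\qquad
W_{2n}^{o,(1,1)}=\binom{2n}{n}\binom{2n}{n-1}=\frac{n}{n+1}\,W_{2n}^{o,o},\qquad n\ge 1 .
\]

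Next I would use \eqref{eqn:Goxexpress} (noting that $W_m^{o,(1,1)}=0$ for odd $m$, so only $m=2n$ with $n\ge 1$ contributes) and set $a_n:=\bigl(\tfrac{1}{4+\kappa}\bigr)^{2n}W_{2n}^{o,o}\ge 0$ to write
\[
G^{o,(1,1)}=\sum_{n=1}^\infty a_n\,\frac{n}{n+1}
=\sum_{n=1}^\infty a_n-\sum_{n=1}^\infty\frac{a_n}{n+1}
=\bigl(G^{o,o}-1\bigr)-\sum_{n=1}^\infty\frac{a_n}{n+1},
\]
where the last equality uses $\sum_{n\ge 0}a_n=G^{o,o}$ together with $a_0=W_0^{o,o}=1$. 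So it suffices to bound the remaining series by $1/\pi$. For this, observe $a_n=\bigl(\tfrac{4}{4+\kappa}\bigr)^{2n}\BP(S_{2n}=o)\le\BP(S_{2n}=o)=\binom{2n}{n}^2 16^{-n}\le \tfrac{1}{\pi n}$, the last step being the standard inequality $\binom{2n}{n}\le 4^n/\sqrt{\pi n}$. Hence
\[
\sum_{n=1}^\infty\frac{a_n}{n+1}\le\sum_{n=1}^\infty\frac{1}{\pi n(n+1)}
=\frac1\pi\sum_{n=1}^\infty\Bigl(\frac1n-\frac1{n+1}\Bigr)=\frac1\pi .
\]

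Combining the two displays with \eqref{eqn:Goolowest} then yields
\[
G^{o,(1,1)}\ge G^{o,o}-1-\frac1\pi
\ge\Bigl(\frac{\log\kappa^{-1}}{\pi}+1-\frac{4}{3\pi}\Bigr)-1-\frac1\pi
=\frac{\log\kappa^{-1}}{\pi}-\frac{7}{3\pi}
\ge\frac{\log\kappa^{-1}}{\pi}-1,
\]
since $7/(3\pi)\approx 0.74<1$. I do not expect a genuine obstacle here: the only points needing a little care are verifying the combinatorial identity $W_{2n}^{o,(1,1)}=\tfrac{n}{n+1}W_{2n}^{o,o}$ (which is precisely why this proof resembles that of Lemma \ref{lemma:partialsumlowest}, both resting on $W_{2n}^{o,o}=\binom{2n}{n}^2$) and checking that the slack $7/(3\pi)$ really stays below $1$. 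An alternative, more self-contained route would bypass \eqref{eqn:Goolowest} and instead lower-bound the partial sum $\sum_{n=1}^{N-1}a_n\frac{n}{n+1}$ directly via \eqref{eqn:partialsumlowest1} with $N=\lfloor\kappa^{-1}\rfloor$, but tracking the constants is messier, so I would prefer the reduction to \eqref{eqn:Goolowest} above.
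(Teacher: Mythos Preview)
Your argument is correct. Both you and the paper start from the same combinatorial identity $W_{2n}^{o,(1,1)}=\tfrac{n}{n+1}\binom{2n}{n}^2$, but the routes then diverge. The paper essentially reruns the machinery of Lemma~\ref{lemma:partialsumlowest}: it uses Stirling to lower-bound $W_{2n}^{o,(1,1)}\ge \tfrac{4^{2n}}{\pi(n+1)}e^{-1/(3n)}$, converts the sum to an integral, and after some calculus arrives at a partial-sum bound $\sum_{n=1}^{N-1}\bigl(\tfrac{1}{4+\kappa}\bigr)^{2n}W_{2n}^{o,(1,1)}\ge \tfrac{\log N}{\pi}-\tfrac{(N-1)\kappa}{\pi}-\tfrac{1}{3\pi}$, then picks $N\approx\kappa^{-1}$. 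Your reduction via $\tfrac{n}{n+1}=1-\tfrac{1}{n+1}$ is cleaner: it recycles \eqref{eqn:Goolowest} directly and replaces the entire integral computation by the telescoping bound $\sum_{n\ge 1}\tfrac{a_n}{n+1}\le\sum_{n\ge 1}\tfrac{1}{\pi n(n+1)}=\tfrac{1}{\pi}$, even yielding the slightly sharper constant $7/(3\pi)$ in place of $1$. The only external input you invoke beyond what the paper already has on the page is the elementary inequality $\binom{2n}{n}\le 4^n/\sqrt{\pi n}$; this is standard (it follows, for instance, from the monotonicity of $\binom{2n}{n}\sqrt{\pi n}/4^n$ up to its limit $1$) and is in any case an immediate consequence of the Stirling bounds \eqref{eqn:Stirling} that the paper uses throughout. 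The paper's approach has the minor advantage of giving a lower bound on every partial sum (not just the full series), but that extra information is not used anywhere.
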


\begin{example} \label{ex:twopointneigh}
\end{example}
\noindent This example is similar to Example 2 in that 
$A=\{o,x\}$. However, we will here assume that $x=(1,1)$ so that 
the two points are close to each other. We start by noting that  
by \eqref{eqn:Gooupest} and Lemma
\ref{lemma:Go11lowest} 
we have that 
\[
G^{o,o}-G^{o,x}
\leq \frac{\log \kappa^{-1}}{\pi}+2
-\left(\frac{\log \kappa^{-1}}{\pi}-1\right)=3.
\]
We therefore have that 
\begin{eqnarray*}
\lefteqn{\left(1-\left(\frac{G^{o,x}}{G^{o,o}}\right)^2\right)^{-u}
\geq 
\left(1-\left(\frac{G^{o,o}-3}{G^{o,o}}\right)^2\right)^{-u}}\\
& & =\left(1-\left(1-\frac{3}{G^{o,o}}\right)^2\right)^{-u}
= \left(\frac{6}{G^{o,o}}-\frac{9}{(G^{o,o})^2}\right)^{-u}\\
& & \geq \left(\frac{6}{G^{o,o}}\right)^{-u}
=\frac{6^{-u}}{\BP(x \cap \CC_{u}= \emptyset)},
\end{eqnarray*}
and therefore (as in \eqref{eqn:Ex1second})
\begin{eqnarray} \label{eqn:Ex2first}
\lefteqn{\BP(\{o,x\}\cap \CC_u =\emptyset)
=\BP(x \cap \CC_u\neq \emptyset)^2
\left(1-\left(\frac{G^{o,x}}{G^{o,o}}\right)^2\right)^{-u}}\\
& & \geq \BP(x \cap \CC_u= \emptyset)^2
\frac{6^{-u}}{\BP(x \cap \CC_{u}=\emptyset)}
=\BP(x \cap \CC_u= \emptyset)6^{-u}. \nonumber
\end{eqnarray}
Noting that trivially, 
\[
\BP(\CT(o)\leq u/\mu(\Gamma_o))
-\BP(\CT(o,x)\leq u/\mu(\Gamma_o))\geq 0,
\]
we can therefore see that 
by using \eqref{eqn:Ex2first} and \eqref{eqn:Ex1prel}
\begin{eqnarray*}
\lefteqn{|\BP(\mu(\Gamma_o)\CT(o,x)\leq u)
-\BP(\mu(\Gamma_o)\CT(o)\leq u)|}\\
& & =\BP(\CT(o)\leq u/\mu(\Gamma_o))
-\BP(\CT(o,x)\leq u/\mu(\Gamma_o)) \nonumber \\
& & =1-\BP(x \cap \CC_{u/\mu(\Gamma_o)}= \emptyset)
-(1-2\BP(x \cap \CC_{u/\mu(\Gamma_o)}= \emptyset)
+\BP(\{o,x\} \cap \CC_{u/\mu(\Gamma_o)}= \emptyset))\\
& & =\BP(x \cap \CC_{u/\mu(\Gamma_o)}= \emptyset)
-\BP(\{o,x\} \cap \CC_{u/\mu(\Gamma_o)}= \emptyset)
\leq \BP(x \cap \CC_{u/\mu(\Gamma_o)} = \emptyset)
\left(1-6^{-{u/\mu(\Gamma_o)}} \right)\\
& & =e^{-u}\left(1-6^{-{u/\mu(\Gamma_o)}} \right)
\to 0, 
\end{eqnarray*}
when $\kappa^{-1} \to \infty,$ since it follows from \eqref{eqn:Goolowest}
that in this case $\mu(\Gamma_o) \to \infty.$ 
We conclude that the re-scaled cover time of 
$\CT(o,x)$ behaves like a single exponentially 
distributed random variable with parameter one.

\medskip

Recall the discussion in the Introduction concerning a possible 
phase transition depending on the rate at which $\kappa_n \to 0$. 
The purpose of our next example is to demonstrate 
the (perhaps unsurprising) fact that if we allow the separation distance 
between the vertices in $A_n$ to depend on the killing rate $\kappa_n,$
such a phase transition will be absent. 
Allowing the separation distance to depend on $\kappa_n$ may feel like
``cheating'', but serves to demonstrate how the 
geometry of the sets $A_n$ plays an important role. 
In order not to make this example, and
indeed the entire paper, forbiddingly long, we shall be somewhat 
informal. Otherwise we would have to repeat large parts of Sections
\ref{sec:2ndmom} and \ref{sec:proofofmain}.

\begin{example} \label{ex:manypointswidesep}

\end{example}
Consider a (large) set $A$ and a killing rate $\kappa$ such that
$\kappa^{-1}\geq \log |A|,$ 
\begin{equation} \label{eqn:xylargesep}
|x-y|\geq 10\kappa^{-2}
\end{equation}
for every $x,y\in A,$ and such that $|x-y|$ is even for every 
$x,y\in A.$ We then have that
\begin{eqnarray*}
\lefteqn{G^{o,x-y}
=\sum_{n=|x|}^{\infty}\left(\frac{1}{4+\kappa}\right)^{n} W_n^{o,x-y}
=\sum_{n=|x|/2}^{\infty}\left(\frac{1}{4+\kappa}\right)^{2n} 
W_{2n}^{o,x-y}}\\
& & \leq \sum_{n=|x|/2}^{\infty}\left(\frac{1}{4+\kappa}\right)^{2n} 
W_{2n}^{o,o}
\leq 4e^{-(|x|/2-1)\kappa/4}
\leq 4e^{-(10\kappa^{-2}/2-1)\kappa/4}
\leq 4e^{-\kappa^{-1}},
\end{eqnarray*}
where we used Lemma \ref{lemma:WxWo} in the first inequality and 
\eqref{eqn:Wootailest_alt} in the second.
Then, as in Lemma \ref{lemma:doublesumlargedist} we have that 
(using that $\mu(\Gamma_o)\geq 1$ whenever $\kappa^{-1}$ is 
larger than $e^9$ due to \eqref{eqn:muGammaolowest}),
\begin{eqnarray*}
\lefteqn{\BP(x,y\in A_\epsilon)
\leq |A|^{-2(1-\epsilon)}
\exp\left(2(1-\epsilon)\frac{\log |A|}{\mu(\Gamma_o)}
\left(\frac{G^{o,y-x}}{G^{o,o}}\right)^2\right)} \\
& & \leq |A|^{-2(1-\epsilon)}
\exp\left(2\log |A| \left(16 e^{-2\kappa^{-1}}\right)\right) \\
& & \leq |A|^{-2(1-\epsilon)}
\exp\left(e^{-\kappa^{-1}}\right)
\leq |A|^{-2(1-\epsilon)}\left(1+e^{-\kappa^{-1}}\right),
\end{eqnarray*}
where we used the assumption that $\kappa^{-1}\geq \log |A|$
in the penultimate inequality.

Next, we consider sequences $(A_n,\kappa_n)_{n \geq 1}$
of sets and killing times 
with the property of $(A, \kappa)$
above, and such that $|A_n| \to \infty.$ We can then use 
the machinery of Sections \ref{sec:2ndmom}
and \ref{sec:proofofmain} and we note that we only need to 
consider the case where $x,y$ is ``well separated'' (i.e 
Lemma \ref{lemma:doublesumlargedist}). Applying this machinery 
demonstrates that that no upper bound for 
$\kappa_n^{-1}$ is needed when proving a statement analogous to 
Theorem \ref{thm:main} in this case. This demonstrates that if we always have
a large separation between the points in $A_n,$ such as described 
by \eqref{eqn:xylargesep}, we will obtain a Gumbel distribution as the limit
even when $\kappa_n \to 0$ exceedingly fast.

\medskip

We end this section with an informal discussion (this is the 
discussion mentioned in the Introduction)
concerning the case
where $\kappa_n^{-1}>>|A_n|$ and $A_n$ is a (very large) 
ball or square. In this case, we 
believe that it may be that for sufficiently large values of 
$\kappa_n^{-1},$ 
the set $A_n$ will asymptotically be covered by the first loop 
which touches the
set $A_n.$ The reason for this belief can be explained in two 
steps as follows.

{\bf Step 1:}
After an exponentially distributed time with rate $\mu(\Gamma_{A_n})$
where
\[
\Gamma_{A_n}:=\bigcup_{x \in A_n} \Gamma_x,
\]
the first loop that touches $A_n$ appears. With very high probability 
this loop should be of length order at least $\log \kappa_n^{-1}$. 
The reason why we expect this, is that when analyzing $\mu(\Gamma_o)$
starting from \eqref{eqn:muGammao}, one can show that the 
contribution from 
loops of length $n$ smaller than $\log \kappa_n^{-1}$ will be 
small compared to the total sum, which is of order 
$\log \log \kappa_n^{-1}$ (due to \eqref{eqn:muGammaoest} and 
\eqref{eqn:muGammaolowest}).

{\bf Step 2:} Considering a typical loop of size order at least
$\log \kappa_n^{-1}$ from Step 1 touching $A_n$, then the probability 
that it will in fact cover the entirety of $A_n$ is again very high
(note that therefore, 
$\mu(\Gamma_{A_n}) \approx \mu(\Gamma_o)$) whenever $\kappa_n^{-1}$
is large enough. The reason why we 
believe this to be true, stems from considering the 
corresponding problem for 
a simple symmetric random walk $S_n$ started at the origin of
$\BZ^2.$ Let $T_n$ be the first time when the walk has visited every 
site $x\in B_n,$ where $B_n$ is the ball of radius $n$ in $\BZ^2.$ 
According to \cite{DPRZ} (see also the references within for
background on this challenging problem), we have that 
\begin{equation} \label{eqn:srwcovertime}
\lim_{n \to \infty} \BP(\log T_n \leq t(\log n)^2)=e^{-4/t}.
\end{equation}
From this, one can then conclude that ``with high probability'', 
the ball $B_n$ will be covered at time, say, exponential of $\gamma(n)(\log n)^2$
where $\gamma(n)$ is chosen appropriately. By letting 
\[
Tr(n):=\{x\in \BZ^2: S_k=x \textrm{ for some } k=0,1,\ldots,n\}
\]
denote the trace of the random walk until time $n$ and observing that
\[
\BP(\log T_n \leq t(\log n)^2) = \BP(T_n \leq n^{t(\log n)}) = \BP(B_n \subset Tr(n^{t \log n})),
\]
it follows from \eqref{eqn:srwcovertime} that 
\begin{equation} \label{eqn:srwcovertime_alt}
\lim_{n \to \infty} \BP(B_n \subset Tr(n^{t \log n}))=e^{-4/t}.
\end{equation}
In turn, one can hope 
that a similar statement can be inferred for a loop rooted at the 
origin by simply conditioning the random walk to be back at the origin 
at some suitable time. For instance, from a statement along the 
lines of (ignoring that $n^{t \log n}$ may not be an integer) 
\begin{equation} \label{eqn:loopcovertime}
\lim_{n \to \infty} \BP(B_n \subset Tr(n^{t \log n})
|S_{n^{t \log n}}=o)=e^{-4/t},
\end{equation}
one can infer that with very high probability, the ball $B_n$ will 
be covered by a loop of length $n^{\gamma(n) \log n}$ where again 
$\gamma(n)$ is chosen appropriately. However, there does not seem
to be an easy way to infer \eqref{eqn:loopcovertime} directly
from \eqref{eqn:srwcovertime_alt} without knowing an explicit
rate of convergence in \eqref{eqn:srwcovertime_alt}. The issue is 
of course that we are conditioning on an event which is known 
to have probability of order $\left(n^{t \log n}\right)^{-1}.$
In order to turn the intuition above into a proof, one would instead
have to prove \eqref{eqn:loopcovertime} by other means. One may
attempt to adapt the techniques of \cite{DPRZ}, but even if this
were possible, that may very well be an entire project in 
itself and outside the scope of this paper.

\begin{appendices} 
\section{}\label{app:walks}

In this appendix we shall provide full proofs of all lemmas of Section 
\ref{sec:Green}. \\

\noindent
\begin{proof}[Proof of Lemma \ref{lemma:WxWo}.]
Let $S_n^x$ denote a simple symmetric random walk started at $x$ 
and with killing rate $\kappa=0.$ Since 
$\BP(S_{2n}^o=x)=4^{-2n} W_{2n}^{o,x},$ it suffices to show that 
$\BP(S_{2n}^o=x)\leq\BP(S_{2n}^o=o)$ for every $x$ such that $|x|$
is even. To that end, we observe that 
\begin{eqnarray*}
\lefteqn{\BP(S_{2n}^o=x)
=\sum_y \BP(S_{n}^o=y)\BP(S_{n}^y=x)
=\sum_y \BP(S_{n}^o=y)\BP(S_{n}^x=y)}\\
& & \leq \sqrt{\sum_y \BP(S_{n}^o=y)\BP(S_{n}^o=y)}
\sqrt{\sum_y \BP(S_{n}^x=y)\BP(S_{n}^x=y)} \\
& & = \sqrt{\sum_y \BP(S_{n}^o=y)\BP(S_{n}^y=o)}
\sqrt{\sum_y \BP(S_{n}^x=y)\BP(S_{n}^y=x)}\\
& &  = \sqrt{\BP(S_{2n}^o=o)}
\sqrt{\BP(S_{2n}^x=x)}
=\BP(S_{2n}^o=o),
\end{eqnarray*}
where we used the reversibility of the random walk in the second 
and third equality.
\end{proof}

For the proof of Lemma \ref{lemma:partialsumlowest} 
(and for future reference), we state
a standard Stirling estimate (see \cite{Robbins}):
\begin{equation} \label{eqn:Stirling}
\sqrt{2 \pi}n^{n+1/2} e^{-n}\leq n! 
\leq \sqrt{2 \pi }n^{n+1/2} e^{-n}e^{1/(12n)} \; \text{ for every } n \geq 1.
\end{equation}

\medskip 

\noindent
\begin{proof}[Proof of Lemma \ref{lemma:partialsumlowest}.]
Recall that $L_{2n}$ denotes the number of loops rooted at $o$ of length 
$2n,$ and note that $W^{o,o}_{2n}=L_{2n}.$ It is well known 
(see for instance \cite{DE}) that $L_{2n}={2n \choose n}^2$ and 
furthermore, by \eqref{eqn:Stirling}, we have that
\begin{eqnarray*}
\lefteqn{{2n \choose n}=\frac{(2n)!}{n!n!}
\geq \frac{\sqrt{2 \pi}(2n)^{2n+1/2} e^{-2n}}
{2 \pi  n^{2n+1} e^{-2n}e^{1/(6n)}}}\\
& & =\frac{1}{\sqrt{2 \pi}}\frac{2^{2n+1/2} n^{2n+1/2}}{n^{2n+1}}e^{-1/(6n)}
=\frac{1}{\sqrt{\pi}}\frac{4^n }{\sqrt{n}}e^{-1/(6n)},
\end{eqnarray*}
and so 
\[
L_{2n}={2n \choose n}^2\geq \frac{4^{2n}}{\pi n}e^{-1/(3n)}.
\]
We can therefore conclude that 
\begin{equation} \label{eqn:sumest}
\sum_{n=0}^{N-1} \left(\frac{1}{4+\kappa}\right)^{2n}W^{o,o}_{2n}
=1+\sum_{n=1}^{N-1} \left(\frac{1}{4+\kappa}\right)^{2n}L_{2n}
\geq1+
\sum_{n=1}^{N-1} \left(\frac{4}{4+\kappa}\right)^{2n} 
\frac{e^{-1/(3n)}}{ \pi n}.
\end{equation}
Next, let $f(x)=\frac{e^{-1/(3x)}}{x}$ and observe that 
\[
f'(x)=\frac{1}{x}\frac{e^{-1/(3x)}}{3x^2}-\frac{1}{x^2}e^{-1/(3x)}
= \frac{e^{-1/(3x)}}{x^2} \left( \frac{1}{3x}-1 \right),
\]
and so $f(x)$ is decreasing for all $x\geq 1.$ Therefore,
\begin{eqnarray} \label{eqn:partialsumlowintest}
\lefteqn{\sum_{n=1}^{N-1} \frac{e^{-1/(3n)}}{\pi n}
\left(\frac{4}{4+\kappa}\right)^{2n}
\geq \sum_{n=1}^{N-1} \int_{n}^{n+1} \frac{e^{-1/(3x)}}{ \pi x}
\left(\frac{1}{1+\kappa/4}\right)^{2x} dx }\\
& & = \int_1^{N} \frac{e^{-1/(3x)}}{ \pi x}
\left(\frac{1}{1+\kappa/4}\right)^{2x} dx 
=\int_{\kappa}^{N \kappa} \frac{e^{-\kappa/(3y)}}{\pi y}
\left(\frac{1}{1+\kappa/4}\right)^{2y/\kappa} dy. \nonumber
\end{eqnarray}
Then, observe that for any $x>0$ we have that $\log (1+x)\leq 2x$.
Therefore, by also using that $e^{-x}\geq 1-x$ for every $x>0,$ we see that
\begin{eqnarray*}
\lefteqn{\left(\frac{1}{1+\kappa/4}\right)^{2y/\kappa}
=\exp\left(-\frac{2y}{\kappa}\log(1+\kappa/4)\right)}\\
& & \geq 1-\frac{2y}{\kappa}\log(1+\kappa/4)
\geq 1-\frac{2y}{\kappa}\frac{\kappa}{2}
=1-y,
\end{eqnarray*}
and so, by again using that $e^{-x}\geq 1-x$ for every $x>0,$
\begin{eqnarray}\label{eqn:intest}
\lefteqn{\int_{\kappa}^{N\kappa} \frac{e^{-\kappa/(3y)}}{\pi y}
\left(\frac{1}{1+\kappa/4}\right)^{2y/\kappa} dy 
\geq \int_{\kappa}^{N \kappa} \frac{1}{\pi y}
\left(1-y\right)\left(1-\frac{\kappa}{3y}\right)dy }\\
& & =\int_{\kappa}^{N \kappa} \frac{3+\kappa}{3 \pi y}
-\frac{1}{\pi}-\frac{\kappa}{3 \pi y^2}dy 
\geq \int_{\kappa}^{N \kappa}  \frac{1}{\pi y}
-\frac{1}{\pi}-\frac{\kappa}{3 \pi y^2}  dy \nonumber \\
& &  =\frac{\log N}{\pi}-\frac{(N-1)\kappa}{\pi}
+\left[\frac{\kappa}{3 \pi y}\right]_\kappa^{N\kappa}
\geq \frac{\log N}{\pi}-\frac{N\kappa}{\pi}
+\frac{\kappa}{3\pi}\left(\frac{1}{N\kappa}-\frac{1}{\kappa}\right) 
\nonumber \\
& &
\geq \frac{\log N}{\pi}-\frac{N\kappa}{\pi}-\frac{1}{3 \pi }. \nonumber
\end{eqnarray}
Combining \eqref{eqn:sumest}, \eqref{eqn:partialsumlowintest} 
and \eqref{eqn:intest} gives us that 
\[
\sum_{n=0}^{N-1} \left(\frac{1}{4+\kappa}\right)^{2n}W^{o,o}_{2n}
\geq 1+\frac{\log N}{\pi}-\frac{N\kappa}{\pi}-\frac{1}{3 \pi },
\]
establishing \eqref{eqn:partialsumlowest1}.

In order to obtain \eqref{eqn:Goolowest}, we first observe that 
by a minor modification of \eqref{eqn:intest} (changing the second to last
inequality into an equality),
we have that 
\[
\sum_{n=0}^{N-1} \left(\frac{1}{4+\kappa}\right)^{2n}W^{o,o}_{2n}
\geq 1+\frac{\log N}{\pi}-\frac{(N-1)\kappa}{\pi}-\frac{1}{3 \pi }.
\]
Thus,
\begin{eqnarray*} 
\lefteqn{G^{o,o}
=\sum_{n=0}^{\infty} \left(\frac{1}{4+\kappa}\right)^{2n}W^{o,o}_{2n} 
\geq \sum_{n=0}^{\lfloor \kappa^{-1}\rfloor} 
\left(\frac{1}{4+\kappa}\right)^{2n}W^{o,o}_{2n} }\\
& & \geq 1+
\frac{\log (\lfloor\kappa^{-1}\rfloor+1)}{\pi}
-\frac{\lfloor \kappa^{-1}\rfloor \kappa}{\pi}-\frac{1}{3 \pi }
\geq 1+\frac{\log \kappa^{-1}}{\pi}-\frac{1}{\pi}-\frac{1}{3 \pi},
\end{eqnarray*}
as desired.
\end{proof}

We now turn to Lemma \ref{lemma:partialsumupest}.\\

\noindent
\begin{proof}[Proof of Lemma \ref{lemma:partialsumupest}.]
We will again use the Stirling estimate \eqref{eqn:Stirling}
to see that
\begin{eqnarray*}
\lefteqn{{2n \choose n}=\frac{(2n)!}{n!n!}
\leq \frac{\sqrt{2 \pi }(2n)^{2n+1/2} e^{-2n}e^{1/(24n)}}
{2 \pi  n^{2n+1} e^{-2n}}}\\
& & =\frac{1}{\sqrt{2 \pi}}\frac{2^{2n+1/2} n^{2n+1/2}}{n^{2n+1}}e^{1/(24n)}
=\frac{1}{\sqrt{\pi}}\frac{4^n }{\sqrt{n}}e^{1/(24n)},
\end{eqnarray*}
and so 
\[
W_{2n}^{o,o}=L_{2n}={2n \choose n}^2\leq \frac{4^{2n}}{\pi n}e^{1/(12n)}.
\]
We obtain that for $N\geq 1,$ 
\[
\sum_{n=N+1}^\infty \left(\frac{1}{4+\kappa}\right)^{2n} W_{2n}^{o,o}
\leq \sum_{n=N+1}^\infty \left(\frac{1}{4+\kappa}\right)^{2n} 
\frac{4^{2n}}{\pi n}e^{1/(12n)}.
\]
Clearly $\left(\frac{4}{4+\kappa}\right)^{2x} \frac{e^{1/(12x)}}{x}$ is a
decreasing function of $x>0,$ and so we get that 
\begin{eqnarray*}
\lefteqn{\sum_{n=N+1}^{\infty} \left(\frac{4}{4+\kappa}\right)^{2n} 
\frac{e^{1/(12n)}}{ \pi n}
\leq \int_N^\infty\left(\frac{4}{4+\kappa}\right)^{2x} 
\frac{e^{1/(12x)}}{ \pi x} dx} \\
& & \leq \int_N^\infty\left(\frac{4}{4+\kappa}\right)^{2x} 
\left(\frac{1}{ \pi x} +\frac{1}{ 6\pi x^2}\right)dx
=\int_{N\kappa}^\infty\left(\frac{4}{4+\kappa}\right)^{2y/\kappa} 
\left(\frac{1}{ \pi y} +\frac{\kappa}{ 6\pi y^2}\right)dy,
\end{eqnarray*}
where we used that $e^y\leq 1+2y$ for $0<y<1$ in the second inequality.
Furthermore, we have that $\log(1+z)\geq z/2$ for every $0<z<1,$ 
and therefore 
\[
\left(\frac{1}{1+\kappa/4}\right)^{2y/\kappa} 
=\exp\left(-\frac{2y}{\kappa}\log(1+\kappa/4)\right)
\leq \exp\left(-\frac{2y}{\kappa}\frac{\kappa}{8}\right)
=e^{-y/4},
\]
and so we obtain 
\begin{equation} \label{eqn:app1}
\sum_{n=N+1}^\infty \left(\frac{1}{4+\kappa}\right)^{2n} 
W_{2n}^{o,o}
\leq \int_{N\kappa}^\infty e^{-y/4}
\left(\frac{1}{ \pi y} +\frac{\kappa}{ 6\pi y^2}\right)dy.
\end{equation}
Therefore, if $N<\kappa^{-1}$ we see that
\begin{eqnarray*}
\lefteqn{\sum_{n=N+1}^\infty \left(\frac{1}{4+\kappa}\right)^{2n} 
W_{2n}^{o,o}
\leq \int_{N\kappa}^\infty e^{-y/4}
\left(\frac{1}{ \pi y} +\frac{\kappa}{ 6\pi y^2}\right)dy}\\
& & =\int_{N\kappa}^1 e^{-y/4}
\left(\frac{1}{ \pi y} +\frac{\kappa}{ 6\pi y^2}\right)dy
+\int_{1}^\infty e^{-y/4}
\left(\frac{1}{ \pi y} +\frac{\kappa}{ 6\pi y^2}\right)dy\\
& & \leq \int_{N\kappa}^1 
\left(\frac{1}{ \pi y} +\frac{\kappa}{ 6\pi y^2}\right)dy
+\int_{1}^\infty e^{-y/4}dy \\
& & =\frac{\log (N\kappa)^{-1}}{\pi}
+\frac{\kappa}{6 \pi}\left[\frac{-1}{y}\right]_{N\kappa}^1
+\int_{1}^\infty e^{-y/4}dy 
\leq \frac{\log (N\kappa)^{-1}}{\pi}
+\frac{1}{6 \pi N}+4,
\end{eqnarray*}
proving \eqref{eqn:Wootailest}. 
If $N\kappa\geq 1/2$, we see from \eqref{eqn:app1} that 
\[
\sum_{n=N+1}^\infty \left(\frac{1}{4+\kappa}\right)^{2n} 
W_{2n}^{o,o}
\leq \int_{N\kappa}^\infty e^{-y/4}
\left(\frac{2}{ \pi} +\frac{4\kappa}{ 6\pi}\right)dy
\leq 
\int_{N\kappa}^\infty e^{-y/4}dy =4e^{-N\kappa/4},
\]
proving \eqref{eqn:Wootailest_alt}. 

For \eqref{eqn:Gooupest}, observe that as above, 
\begin{eqnarray}\label{eqn:Gooupest1}
\lefteqn{G^{o,o}=1+\left(\frac{1}{4+\kappa}\right)^2 W_2^{o,o}
+\sum_{n=2}^{\infty} \left(\frac{1}{4+\kappa}\right)^{2n} W_{2n}^{o,o}}\\
& & \leq 1+\frac{1}{4}
+\int_{\kappa}^\infty e^{-y/4}
\left(\frac{1}{ \pi y} +\frac{\kappa}{ 6\pi y^2}\right)dy. \nonumber
\end{eqnarray} 
Next, we have that 
\begin{eqnarray*}
\lefteqn{\int_{\kappa}^1 e^{-y/4}
\left(\frac{1}{ \pi y} +\frac{\kappa}{ 6\pi y^2}\right)dy 
\leq \int_{\kappa}^1
\left(\frac{1}{ \pi y} +\frac{\kappa}{ 6\pi y^2}\right)dy }\\
& & =\left[\frac{\log y}{\pi}-\frac{\kappa}{6 \pi y}\right]_{\kappa}^1
=-\frac{\log \kappa}{\pi}-\frac{\kappa}{6 \pi} +\frac{1}{6 \pi}
\leq \frac{\log \kappa^{-1}}{\pi}+\frac{1}{6 \pi },
\end{eqnarray*}
and furthermore, using that $\kappa<1,$
\[
\int_1^\infty e^{-y/4}
\left(\frac{1}{ \pi y} +\frac{\kappa}{ 6\pi y^2}\right)dy 
\leq \frac{7}{6\pi}\int_1^\infty e^{-y/4}\frac{1}{y}dy 
\leq \frac{2}{\pi},
\]
where the last inequality can be verified through elementary but 
lengthy calculations.
We can then conclude from \eqref{eqn:Gooupest1} that 
\[
G^{o,o}\leq 1+\frac{1}{4}+\frac{\log \kappa^{-1}}{\pi}+\frac{1}{6 \pi }
+\frac{2}{\pi}\leq \frac{\log \kappa^{-1}}{\pi}+2,
\]
as desired.
\end{proof}

Before we can prove Lemma \ref{lemma:sumWoxest}, which is 
the last lemma of Section \ref{sec:Green}, we need to present a version
of the so-called local central limit theorem for random walks. 
We do not claim that the following result is original, although
we could not find an explicit reference. However, as we shall see,
the result is an 
easy consequence of Theorem 2.1.1 equation (2.4) of \cite{LL}.

\begin{lemma} \label{lemma:2dLCLT}
There exists a constant $C<\infty,$ such that for any $n\geq 1$ and any 
$x\in \BZ^2$ such that $|x|\geq 3,$ we have that 
\begin{equation} \label{eqn:2dLCLT}
\BP(S_n=x)\leq \frac{2}{n}e^{-\frac{|x|^2}{2n}}+\frac{C}{n|x|^2}.
\end{equation} 
\end{lemma}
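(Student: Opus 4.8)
The plan is to read off the inequality directly from the local central limit theorem for the two-dimensional simple random walk, after rewriting it in the $\ell^1$ normalisation $|x| = |x_1| + |x_2|$ used in the paper. Write $p_n(x) := \BP(S_n = x)$ and $\|x\| := (x_1^2 + x_2^2)^{1/2}$, so that $\|x\|^2 \ge |x|^2/2$ (since $(|x_1|+|x_2|)^2 \le 2(x_1^2+x_2^2)$). First dispose of two trivial cases: because $S_n$ moves exactly one $\ell^1$-unit per step, $p_n(x) = 0$ whenever $|x| > n$, and $p_n(x) = 0$ whenever $|x|$ and $n$ have opposite parities; in either situation \eqref{eqn:2dLCLT} holds with room to spare. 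In the remaining range $|x| \le n$ (matching parity), the one-step covariance of the walk is $\tfrac12 I$, so its Gaussian approximation --- including the periodicity factor $2$ --- is
\[
\bar p_n(x) = \frac{2}{\pi n}\, e^{-\|x\|^2/n},
\]
and Theorem 2.1.1, equation (2.4), of \cite{LL} supplies a constant $C_0 < \infty$ such that $p_n(x) \le \bar p_n(x) + C_0/(n|x|^2)$ for all such $n$ and $x$ (in dimension $d=2$ the error term in (2.4) is of precisely this form).

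Granting this, the lemma is one line: using $\|x\|^2 \ge |x|^2/2$ and $2/\pi < 2$,
\[
p_n(x) \;\le\; \frac{2}{\pi n}\, e^{-\|x\|^2/n} + \frac{C_0}{n|x|^2}
\;\le\; \frac{2}{n}\, e^{-|x|^2/(2n)} + \frac{C_0}{n|x|^2},
\]
which is \eqref{eqn:2dLCLT} with $C = C_0$.

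What remains is bookkeeping, and this is where the (modest) care is needed. First one must check that \cite{LL}'s conventions match ours: that their limiting Gaussian, specialised to the bipartite walk on $\BZ^2$ with covariance $\tfrac12 I$, is indeed $\tfrac2{\pi n}e^{-\|x\|^2/n}$, and that the error bound in their (2.4), stated for general $d$, does reduce to $O(1/(n|x|^2))$ when $d=2$. Second, should their estimate be stated only for $|x|$ below a fixed multiple of $n$, or only for $n$ past some threshold $n_0$, the leftover ranges have to be handled separately. For $\varepsilon n \le |x| \le n$ this is immediate: applying the Hoeffding--Azuma inequality to the martingale $\langle S_k, u\rangle$ with $u = (\mathrm{sgn}\, x_1, \mathrm{sgn}\, x_2)$, which has increments of absolute value $1$ and equals $|x|$ on the event $\{S_n = x\}$, gives $p_n(x) \le e^{-|x|^2/(2n)} \le e^{-\varepsilon^2 n/2}$, which is $\le C/(n|x|^2)$ once $C \ge \sup_{m\ge 1} m^3 e^{-\varepsilon^2 m/2}$. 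For the finitely many small $n$ one just uses $p_n(x) \le 1 \le C/(n|x|^2)$, legitimate since $p_n(x) = 0$ unless $|x| \le n \le n_0$, so $C \ge n_0^3$ suffices. I expect the only genuine ``content'' to be verifying the shape of \cite{LL}'s error term in two dimensions; everything else is the elementary norm comparison displayed above.
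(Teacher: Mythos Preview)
Your approach and the paper's are built on the same ingredient---equation (2.4) of Lawler--Limic---and both finish with the same $\ell^1/\ell^2$ norm comparison $\|x\|^2 \ge |x|^2/2$. The substantive difference is in how the bipartite nature of the simple random walk is handled. You apply (2.4) directly to the bipartite walk, inserting the periodicity factor $2$ into the Gaussian kernel and treating the check that (2.4) actually covers this case as deferred ``bookkeeping.'' The paper's authors take the view that (2.4) as stated applies only to aperiodic walks, and therefore perform an explicit reduction: they pass to the even-time walk $S^e_n = T(S_{2n})$ via the 45-degree rotation $T$, verify that this auxiliary walk is aperiodic with covariance $\tfrac12 I$, apply (2.4) to it, and then transform back. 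They also handle odd $|x|$ separately by averaging over the four neighbours of $x$; your uniform treatment sidesteps that split.

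Your route is shorter and morally correct, but the piece you label ``bookkeeping'' is exactly the content the paper supplies: if Lawler--Limic's (2.4) is indeed stated only for aperiodic walks (as the authors assert), then your proof as written has a gap at precisely that citation, and filling it amounts to doing the 45-degree reduction. Your contingency arguments (Hoeffding--Azuma for $|x|$ comparable to $n$, trivial bounds for small $n$) are sound and even a bit more careful than the paper, which simply takes (2.4) to hold uniformly in $n$ and $y$; but they do not substitute for the aperiodicity verification.
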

\begin{proof}
Let $(S_n^a)_{n \geq 1}$ be an aperiodic symmetric random walk 
on $\BZ^2.$ It follows from equation (2.4) of \cite{LL} that 
for any $n\geq 1$ and $y\in \BZ^2,$
\begin{equation} \label{eqn:2dLCLTaper}
\BP(S^a_n=y)
\leq \frac{1}{2 \pi n \sqrt{\det \Gamma}}e^{-\frac{\CJ^*(y)^2}{2n}}
+\frac{C}{n\Vert y \Vert^2},
\end{equation}
where $\Vert \cdot \Vert$ denotes the $L^2$-norm. Furthermore, 
using the notation in \cite{LL} (see in particular p.\ 4), 
$\Gamma$ is the covariance matrix of the walk
and $\CJ^*(y)^2=\Vert y \cdot \Gamma^{-1} y \Vert$.

For a simple symmetric random walk on $\BZ^2,$ it is easy to see that 
$\Gamma=\frac{1}{2}I$ where $I$ is the $2\times 2$ identity matrix, and that 
$\CJ^*(y)^2=2\Vert y\Vert^2$ as stated on p.\ 4 of 
\cite{LL}. It is worth noting that in \cite{LL} 
the notation $|\cdot|$ is used for the $L^2$-norm (corresponding to Euclidean distance) while, throughout this paper, we use this notation for the $L^1$-norm (corresponding to graph distance), as defined at the beginning of Section~\ref{sec:Green}.
The simple symmetric random walk 
is however not aperiodic but rather bipartite (in the sense of p.\ 3 of 
\cite{LL}) since $\BP(S_n=o)=0$ whenever $n$ is odd. Therefore, we 
cannot apply \eqref{eqn:2dLCLTaper} directly.
Instead, we shall first consider even times $n,$ and turn the simple 
symmetric walk at those times into an aperiodic 
random walk on $\BZ^2$ as we now explain. This auxiliary walk
$(S^e_n)_{n\geq 1}$ is defined by letting 
\begin{equation} \label{eqn:defSne}
S^e_n=T(S_{2n}) \textrm{ where } 
T(x)=T((x_1,x_2))=\frac{1}{2}(x_1+x_2,x_2-x_1),
\end{equation}
for every $n\geq 0$ and $x\in \BZ^2.$
This corresponds to considering the walk at even times on the even sublattice of $\BZ^2$ rotated clockwise by 45 degrees and shrunk by a factor of $\sqrt{2}$, which gives a new walk on $\BZ^2.$
For example, $S_2$ can take any of the nine values in 
$\{o,\pm 2e_1,\pm 2e_2,\pm e_1\pm e_2\}$ 
(where $e_1=(1,0)$
and $e_2=(0,1)$), which correspond to the vertices at distance 0 
or distance 2 from $o.$ The mapping $T$ then maps these 
nine vertices to the set $\{o,\pm e_1,\pm e_2,\pm e_1 \pm e_2\}$
so that for instance 
\[
T(2 e_1)=T((2,0))=\frac{1}{2}(2,-2)=(1,-1)
\]
and 
\[
T(-e_1-e_2)=T((-1,-1))=\frac{1}{2}(-2,0)=(-1,0).
\]
Since the steps of the random walk $(S_n)_{n\geq 1}$ are independent, 
we see that $(S^e_n)_{n\geq 1}$ is an aperiodic random walk on $\BZ^2$
(since $\BP(S_n=o)>0$ for every $n\geq 0$).
Therefore, we can apply \eqref{eqn:2dLCLTaper} to this walk.

Fix $x\in \BZ^2$ such that $|x|$ is even. Clearly, $\BP(S_n=x)=0$
if $n$ is odd and so trivially \eqref{eqn:2dLCLT} holds in this 
case. If instead $n$ is even, we have from \eqref{eqn:2dLCLTaper} that
\begin{equation} \label{eqn:Psnxeven}
\BP(S_n=x)=\BP(S^e_{n/2}=T(x))
\leq \frac{1}{2 \pi n \sqrt{\det \Gamma}}e^{-\frac{\CJ^*(T(x))^2}{2n}}
+\frac{C'}{n\Vert T(x) \Vert^2},
\end{equation}
for some $C'<\infty$.
In order to continue, we need to determine 
$\Gamma=\BE[(S^e_1)_i(S^e_1)_j]_{1\leq i,j \leq 2}$
where $(S^e_1)_1,(S^e_1)_2$ denote the first and the second coordinate
of $S^e_1$, respectively.
Therefore, note that 
\[
\BP(S^e_1=o)=\BP(S_2=o)=\frac{1}{4},\ 
\BP(S^e_1=\pm e_1)=\BP(S^e_1=\pm e_2)=\BP(S_2=(1,1))=\frac{1}{8}
\] 
and that all four remaining probabilities equal $1/16.$ 
We then see that 
\[
\BE[(S^e_1)_1(S^e_1)_1]
=\BP((S^e_1)_1=\pm 1)=1-\BP(S^e_1=o)-\BP(S^e_1=\pm e_2)
=\frac{1}{2},
\]
and by symmetry that $\BE[(S^e_1)_2(S^e_1)_2]=\frac{1}{2}.$ 
Furthermore, 
\[
\BE[(S^e_1)_1(S^e_1)_2]=1\BP(S^e_1=\pm(1,1))-1\BP(\pm (1,-1))=0,
\]
and so 
\[
\Gamma
=\frac{1}{2}\left[ \begin{array}{cc}
1 & 0  \\
0 & 1
\end{array}
\right]
\textrm{ so that }
\Gamma^{-1}
=2\left[ \begin{array}{cc}
1 & 0  \\
0 & 1
\end{array}
\right].
\]
Therefore 
\[
\sqrt{\det \Gamma} =\sqrt{\frac{1}{4}}=\frac{1}{2}
\textrm{ and } 
\CJ^*(y)^2=\Vert y \cdot \Gamma^{-1} y \Vert^2=
4  \Vert  y \Vert^2.
\]
Furthermore, using the definition of the map $T$ from 
\eqref{eqn:defSne} we see that 
\[
\Vert  T(x) \Vert^2
=\Vert  \frac{1}{2}(x_1+x_2,x_2-x_1) \Vert^2
=\frac{1}{4}((x_1+x_2)^2+(x_2-x_1)^2)=\frac{\Vert x \Vert^2}{2}.
\]
Inserting this into \eqref{eqn:Psnxeven} we obtain
\begin{eqnarray}
\BP(S_n=x)\leq\frac{1}{\pi n}e^{-\frac{4\Vert T(x) \Vert^2}{2n}}
+\frac{C'}{n\Vert T(x) \Vert^2} 
=\frac{1}{\pi n}e^{-\frac{\Vert x \Vert^2}{n}}
+\frac{2C'}{n\Vert x \Vert^2}.
\nonumber 
\end{eqnarray}
Lastly, we have that $\Vert x\Vert^2 \leq |x|^2 \leq 2\Vert x \Vert^2$
and so we conclude that 
\[
\BP(S_n=x)
\leq \frac{1}{\pi n}e^{-\frac{\vert x \vert^2}{2n}}
+\frac{4C'}{n\vert x \vert^2}.
\]
Thus, \eqref{eqn:2dLCLT} holds in this case for any $C \geq 4C'$.

Assume now instead that $|x|$ is odd so that $\BP(S_n=x)=0$ whenever $n$ is even
or $|x|>n.$
For $n \geq |x|$, we can now sum over the neighbors $y \sim x$ and use the last inequality for each $y$ to obtain, 
for $|x|\geq 3,$ 
\begin{eqnarray*}
\lefteqn{\BP(S_n=x)
=\frac{1}{4}\sum_{y\sim x} \BP(S_{n-1}=y)}\\
& & \leq \frac{1}{4}\sum_{y\sim x} \left(\frac{1}{\pi (n-1)}
e^{-\frac{\vert y \vert^2}{2(n-1)}}
+\frac{4C'}{(n-1)\vert y \vert^2}\right) 
\leq \frac{1}{\pi (n-1)}e^{-\frac{(\vert x \vert-1)^2}{2(n-1)}}
+\frac{4C'}{(n-1)(\vert x \vert-1)^2} \\
& & \leq \frac{2}{\pi n}e^{-\frac{(\vert x \vert-1)^2}{2n}}
+\frac{C}{n\vert x \vert^2}
\leq \frac{2}{\pi n}e^{-\frac{\vert x \vert^2}{2n}}e^{\frac{\vert x \vert}{n}}
+\frac{C}{n\vert x \vert^2} 
\leq \frac{2}{n}e^{-\frac{\vert x \vert^2}{2n}}
+\frac{C}{n\vert x \vert^2}
\end{eqnarray*}
for some $C<\infty$, where, in the last inequality, we have used that $n \geq |x|$.
This establishes \eqref{eqn:2dLCLT}
also for $|x|$ odd.
\end{proof}

\begin{proof}[Proof of Lemma \ref{lemma:sumWoxest}.] 
According to Lemma \ref{lemma:2dLCLT} we have that for $|x|\geq 3$, 
\[
\BP(S_n=x)\leq \frac{2}{n}e^{-\frac{|x|^2}{2n}}+\frac{C}{n|x|^2}.
\]
Observe next that the function $ye^{-cy}$ is decreasing in $y$ for 
$y>1/c$ from which it follows that
$\frac{2}{n}e^{-\frac{|x|^2}{2n}}$ is increasing in $n$ for 
$n<\frac{|x|^2}{2}.$ 
To prove \eqref{eqn:sumWoxest2}, we then observe that
\begin{eqnarray*}
\lefteqn{\sum_{n=|x|}^{\left\lfloor \frac{|x|^2}{2\log |x|}\right\rfloor}
\left(\frac{1}{4+\kappa}\right)^{n} W_n^{o,x}
=\sum_{n=|x|}^{\left\lfloor \frac{|x|^2}{2\log |x|}\right\rfloor}
\left(\frac{4}{4+\kappa}\right)^{n} 4^{-n} W_n^{o,x}
\leq \sum_{n=|x|}^{\left\lfloor \frac{|x|^2}{2\log |x|}\right\rfloor} \BP(S_n=x)}\\
& & \leq \sum_{n=|x|}^{\left\lfloor \frac{|x|^2}{2\log |x|}\right\rfloor}
\left(\frac{2}{n}e^{-\frac{|x|^2}{2n}}
+\frac{C}{n|x|^2}\right)
 \leq \sum_{n=|x|}^{\left\lfloor \frac{|x|^2}{2\log |x|}\right\rfloor}
\left(\frac{4\log |x|}{|x|^2}e^{-\frac{|x|^22\log |x|}{2|x|^2}} 
+\frac{C}{|x|^3}\right)\\
& & \leq \frac{|x|^2}{2 \log |x|}
\left(\frac{4\log |x|}{|x|^2}e^{-\log |x|} 
+\frac{C}{|x|^3}\right)
= 2|x|^{-1}+C\frac{1}{2|x|\log |x|}
\leq 3|x|^{-1},
\end{eqnarray*}
where the last inequality follows by taking $|x|$ 
sufficiently large.

In order to prove \eqref{eqn:sumWoxest3}, observe first that 
\begin{eqnarray} \label{eqn:sumWoxest31}
\lefteqn{\sum_{n=\left\lfloor \frac{|x|^2}{2\log |x|}\right\rfloor+1}^{|x|^2}
\left(\frac{1}{4+\kappa}\right)^{n} W_n^{o,x}
=\sum_{n=\left\lfloor \frac{|x|^2}{2\log |x|}\right\rfloor+1}^{|x|^2} 
\left(\frac{4}{4+\kappa}\right)^{n} 4^{-n} W_n^{o,x}}\\
& & =\sum_{n=\left\lfloor \frac{|x|^2}{2\log |x|}\right\rfloor+1}^{|x|^2} 
\left(1-\frac{\kappa}{4+\kappa}\right)^{n} \BP(S_n=x)
\leq \left(1-\frac{\kappa}{4+\kappa}\right)^{\frac{|x|^2}{2 \log |x|}}
\sum_{n=\left\lfloor \frac{|x|^2}{2\log |x|}\right\rfloor+1}^{|x|^2}\BP(S_n=x). \nonumber
\end{eqnarray}
Next, by using Lemma \ref{lemma:2dLCLT} and the fact that $ye^{-cy}$ 
is maximized when $y=1/c$ whenever $c>0,$ we see that  
\begin{eqnarray} \label{eqn:sumWoxest32}
\lefteqn{\sum_{n=\left\lfloor \frac{|x|^2}{2\log |x|}\right\rfloor+1}^{|x|^2}
\BP(S_n=x)\leq\sum_{n=\left\lfloor \frac{|x|^2}{2\log |x|}\right\rfloor+1}^{|x|^2}
\left(\frac{2}{n}e^{-\frac{|x|^2}{2n}}
+\frac{C}{n|x|^2}\right)}\\
& & \leq \sum_{n=\left\lfloor \frac{|x|^2}{2\log |x|}\right\rfloor+1}^{|x|^2}
\left(\frac{4}{|x|^2}e^{-1}
+\frac{2C\log |x|}{|x|^4}\right)
\leq |x|^2\left(\frac{4}{|x|^2}e^{-1}
+\frac{2C\log |x|}{|x|^4}\right)\leq 2, \nonumber
\end{eqnarray}
by taking $|x|$ large enough.
Combining \eqref{eqn:sumWoxest31} and \eqref{eqn:sumWoxest32}
proves \eqref{eqn:sumWoxest3}.
\end{proof}

The last proof of this Appendix is that of Lemma \ref{lemma:Go11lowest}.

\begin{proof}[Proof of Lemma \ref{lemma:Go11lowest}] 
We will be somewhat informal at the start since the proof relies 
on a well known technique for two-dimensional random walks. 
Indeed, using the so-called ''45-degree-trick'' 
it is easy to show that 
\[
W_{2n}^{o,(1,1)}={2n \choose n} {2n \choose n+1}.
\]
Informally, this can be explained as follows. Start by considering 
a clockwise rotation of $\BZ^2$ by 45 degrees. 
After re-orientation, 
the original walk can now be viewed as two independent one-dimensional
random walks of lengths $2n$ on this rotated lattice.
In order for the original random walk
to end up at $(1,1)$, the re-oriented walk must be such that the 
vertical walker returns to the origin at time $2n$, 
while the horizontal must take $n+1$ steps to the right and 
$n-1$ to the left, in order to end up at the 
position two steps to the right of the origin at time $2n.$

Continuing, we have that 
${2n \choose n+1}=\frac{n}{n+1} {2n \choose n}$
so that 
\[
W_{2n}^{o,(1,1)}=\frac{n}{n+1} {2n \choose n}^2.
\]
As in the proof of Lemma \ref{lemma:partialsumlowest} we then
see that 
\[
W_{2n}^{o,(1,1)} \geq \frac{n}{n+1} \frac{4^{2n}}{\pi n}e^{-1/(3n)}
=\frac{4^{2n}}{\pi (n+1)}e^{-1/(3n)}.
\]
We can therefore conclude that 
\begin{equation} \label{eqn:sumest_alt}
\sum_{n=1}^{N-1} \left(\frac{1}{4+\kappa}\right)^{2n}W^{o,(1,1)}_{2n}
\geq
\sum_{n=1}^{N-1} \left(\frac{4}{4+\kappa}\right)^{2n} 
\frac{e^{-1/(3n)}}{ \pi (n+1)}.
\end{equation}
Next, let $f(x)=\frac{e^{-1/(3x)}}{x+1}$ and observe that 
\[
f'(x)=\frac{1}{x+1}\frac{e^{-1/(3x)}}{3x^2}-\frac{1}{(x+1)^2}e^{-1/(3x)}
= \frac{e^{-1/(3x)}}{x+1} \left( \frac{1}{3x^2}-\frac{1}{x+1} \right),
\]
and so $f(x)$ is decreasing for all $x\geq 1.$ Therefore,
\begin{eqnarray} \label{eqn:partialsumlowintest_alt}
\lefteqn{\sum_{n=1}^{N-1} \frac{e^{-1/(3n)}}{\pi (n+1)}
\left(\frac{4}{4+\kappa}\right)^{2n}
\geq \sum_{n=1}^{N-1} \int_{n}^{n+1} \frac{e^{-1/(3x)}}{ \pi (x+1)}
\left(\frac{1}{1+\kappa/4}\right)^{2x} dx }\\
& & = \int_1^{N} \frac{e^{-1/(3x)}}{ \pi (x+1)}
\left(\frac{1}{1+\kappa/4}\right)^{2x} dx 
=\int_{\kappa}^{N \kappa} \frac{e^{-\kappa/(3y)}}{\pi (y/\kappa+1)}
\left(\frac{1}{1+\kappa/4}\right)^{2y/\kappa} 
\frac{1}{\kappa}dy \nonumber \\
& & =\int_{\kappa}^{N \kappa} \frac{e^{-\kappa/(3y)}}{\pi (y+\kappa)}
\left(\frac{1}{1+\kappa/4}\right)^{2y/\kappa} dy. 
\nonumber
\end{eqnarray}
Then, observe that for any $x>0$ we have that $\log (1+x)\leq 2x$.
Thus, since $e^{-x}\geq 1-x$ for every $x>0$,
\[
\left(\frac{1}{1+\kappa/4}\right)^{2y/\kappa}
=\exp\left(-\frac{2y}{\kappa}\log(1+\kappa/4)\right)
\geq e^{-y} \geq 1-y
\]
and so,
\begin{eqnarray}\label{eqn:intest_alt}
\lefteqn{\int_{\kappa}^{N\kappa} \frac{e^{-\kappa/(3y)}}{\pi (y+\kappa)}
\left(\frac{1}{1+\kappa/4}\right)^{2y/\kappa} dy 
\geq \int_{\kappa}^{N \kappa} \frac{1}{\pi (y+\kappa)}
\left(1-y\right)\left(1-\frac{\kappa}{3y}\right)dy }\\
& & =\frac{1}{\pi}\int_{\kappa}^{N \kappa} \frac{3+\kappa}{3(y+\kappa)}
-\frac{y}{y+\kappa}-\frac{\kappa}{3 y (y+\kappa)}dy 
\geq \frac{1}{\pi}\int_{\kappa}^{N \kappa} \frac{1}{y+\kappa}
-1-\frac{\kappa}{3 y^2}dy 
\nonumber \\
& &  =\frac{1}{\pi}\left[\log(y+\kappa)\right]_\kappa^{N\kappa}
-\frac{(N-1)\kappa}{\pi}
+\left[\frac{\kappa}{3 \pi y}\right]_\kappa^{N\kappa}
=  \frac{\log(N+1)}{\pi}
-\frac{(N-1)\kappa}{\pi}
+\frac{1}{3\pi N}
-\frac{1}{3\pi}\nonumber \\
& & \geq \frac{\log N}{\pi}
-\frac{(N-1)\kappa}{\pi}-\frac{1}{3\pi}.
\nonumber
\end{eqnarray}
Combining \eqref{eqn:sumest_alt}, \eqref{eqn:partialsumlowintest_alt} 
and \eqref{eqn:intest_alt} gives us that 
\[
\sum_{n=1}^{N-1} \left(\frac{1}{4+\kappa}\right)^{2n}W^{o,(1,1)}_{2n}
\geq \frac{\log N}{\pi}
-\frac{(N-1)\kappa}{\pi}-\frac{1}{3 \pi }.
\]
We therefore conclude that 
\begin{eqnarray*} 
\lefteqn{G^{o,(1,1)}
=\sum_{n=1}^{\infty} \left(\frac{1}{4+\kappa}\right)^{2n}
W^{o,(1,1)}_{2n} 
\geq \sum_{n=0}^{\lfloor \kappa^{-1}\rfloor} 
\left(\frac{1}{4+\kappa}\right)^{2n}W^{o,(1,1)}_{2n} }\\
& & \geq 
\frac{\log (\lfloor\kappa^{-1}\rfloor+1)}{\pi}
-\frac{\lfloor \kappa^{-1}\rfloor \kappa}{\pi}-\frac{1}{3 \pi }
\geq \frac{\log \kappa^{-1}}{\pi}-\frac{1}{\pi}-\frac{1}{3 \pi}
\geq \frac{\log \kappa^{-1}}{\pi}-1
\end{eqnarray*}
as desired.
\end{proof}

{\bf Acknowledgments:} We would like to thank the anonymous referees for 
providing many helpful and detailed comments. We would also like to thank Yves Le Jan 
for suggesting the problem studied in this paper and Stas Volkov for discussions
concerning random walks.

\end{appendices}

\end{document}